\newtheorem{theorem}{Theorem}[section]
\newtheorem{lemma}{Lemma}[section]
\newtheorem{proposition}{Proposition}[section]
\numberwithin{equation}{section}
\newcommand{\lbl}[1]{\label{#1}}
\newcommand{\be}{\begin{equation}}
\newcommand{\ee}{\end{equation}}
\newcommand\bes{\begin{eqnarray}} \newcommand\ees{\end{eqnarray}}
\newcommand{\bess}{\begin{eqnarray*}}
\newcommand{\eess}{\end{eqnarray*}}
\newcommand{\bbbb}{\left\{\begin{aligned}}
\newcommand{\nnnn}{\end{aligned}\right.}
\newcommand{\bea}{\begin{align*}}
\newcommand{\eea}{\end{align*}}
\newcommand\ep{\varepsilon}
\newcommand\kk{\left}
\newcommand\rr{\right}
\newcommand\dd{\displaystyle}
\newcommand\df{\dd\frac}
\newcommand\dx{{\rm d}x}
\newcommand\dy{{\rm d}y}
\newcommand\yy{\infty}
\newcommand\qq{\eqref}
\newcommand\ol{\overline}
\newcommand\ud{\underline}
\begin{document}\thispagestyle{empty}
\setlength{\baselineskip}{16pt}
\begin{center}
 {\LARGE\bf Dynamics of an epidemic model}\\[2mm]
 {\LARGE\bf with nonlocal diffusion and a free boundary\footnote{This work was supported by NSFC Grants
 12171120, 12301247}}\\[4mm]
{\Large Lei Li}\\[0.5mm]
{College of Science, Henan University of Technology, Zhengzhou, 450001, China}\\[2.5mm]
  {\Large Mingxin Wang\footnote{Corresponding author. {\sl E-mail}: mxwang@hpu.edu.cn}}\\[0.5mm]
 {School of Mathematics and Information Science, Henan Polytechnic University, Jiaozuo, 454003, China}
\end{center}

\date{\today}

\begin{quote}
\noindent{\bf Abstract.} An epidemic model, where the dispersal is approximated by nonlocal diffusion operator and spatial domain has one fixed boundary and one free boundary, is considered in this paper. Firstly, using some elementary analysis instead of variational characterization, we show the existence and asymptotic behaviors of the principal eigenvalue of a cooperative system which can be used to characterize more epidemic models, not just ours. Then we study the existence, uniqueness and stability of a related steady state problem. Finally, we obtain a rather complete understanding for long time behaviors, spreading-vanishing dichotomy, criteria for spreading and vanishing, and spreading speed. Particularly, we prove that the asymptotic spreading speed of solution component $(u,v)$ is equal to the spreading speed of free boundary which is finite if and only if a threshold condition holds for kernel functions.

\textbf{Keywords}: Nonlocal diffusion; epidemic model; free boundary; principal eigenvalue; criteria for spreading and vanishing; spreading speed.

\textbf{AMS Subject Classification (2000)}: 35K57, 35R09,
35R20, 35R35, 92D25
\end{quote}

\section{Introduction}\pagestyle{myheadings}
\renewcommand{\thethm}{\Alph{thm}}
{\setlength\arraycolsep{2pt}

To model the spread of an oral-faecal transmitted epidemic, Hsu and Yang \cite{HY} proposed the following PDE system
\bes\label{1.1}
\left\{\!\begin{aligned}
&u_{t}=d_1\Delta u-au+H(v), & &t>0,~x\in\mathbb{R},\\
&v_{t}=d_2\Delta v-bv+G(u), & &t>0,~x\in\mathbb{R},
\end{aligned}\right.
 \ees
which is used to model the oral-faecal transmitted epidemic, where $H(v)$ and $G(u)$ satisfy
\begin{enumerate}
\item[{\bf(H)}]\; $H,G\in C^2([0,\yy))$, $H(0)=G(0)=0$, $H'(z),G'(z)>0$ in $[0,\yy)$, $H''(z), G''(z)<0$ in $(0,\yy)$, and $G(H(\hat z)/a)<b\hat{z}$ for some $\hat{z}>0$.
 \end{enumerate}
An example for such $H$ and $G$ is $H(z)=\alpha z/(1+z)$ and $G(z)=\beta \ln(z+1)$ with $\alpha,\beta>0$. In model \eqref{1.1}, $u(t,x)$ and $v(t,x)$ stand for the spatial
concentrations of the bacteria and the infective human population, respectively, at time $t$ and location $x$ in the one dimensional habitat; $-au$ represents the natural death rate of the bacterial population and $H(v)$ denotes the contribution of the infective human to the growth rate of the bacteria; $-bv$ is the fatality rate of the infective human population and $G(u)$ is the infection rate of human population; $d_1$ and $d_2$, respectively, stand for the diffusion rate of bacteria and infective human. Define
  \bes
  \mathcal{R}_0=\frac{H'(0)G'(0)}{ab}.
  \lbl{x.1}\ees
When $\mathcal{R}_0>1$, the authors proved that there exists a $c_*>0$ such that \eqref{1.1} has a positive monotone travelling wave solution if and only if $c\ge c_*$. Moreover, dynamics of the corresponding ODE system with positive initial value is govern by $\mathcal{R}_0$. More precisely, when $\mathcal{R}_0<1$, $(0,0)$ is globally asymptotically stable; while when $\mathcal{R}_0>1$, there exists a unique positive equilibrium $(U,V)$ which is uniquely given by
  \bes\label{1.2}
  aU=H(V), ~ ~ bV=G(U),\ees
and is globally asymptotically stable.

If $H(v)=cv$, then system \eqref{1.1} reduces to
\bes\label{1.3}
u_{t}=d_1\Delta u-au+cv, ~ ~ v_{t}=d_2\Delta v-bv+G(u), t>0, ~ x\in\mathbb{R}
 \ees
whose corresponding ODE system was proposed in \cite{CP} to describe the 1973 cholera epidemic spread in the European Mediterranean regions. Here $G$ satisfies that $G\in C^2([0,\yy))$, $G(0)=0<G'(u)$ in $[0,\yy)$, $G(u)/u$ is strictly decreasing in $(0,\yy)$ and $\lim\limits_{u\to\yy}{G(u)}/{u}<{ab}/{c}$. From {\bf(H)} and the assumption on $G$ of \eqref{1.3}, it can be learned that both \eqref{1.1} and \eqref{1.3} are monostable cooperative systems, which have been extensively used to describe the spread of epidemic, such as cholera, typhoid fever and West Nile virus, etc.  When modeling epidemic, an important issue is to know where the spreading frontier of epidemic is located, which naturally motivates us to discuss the systems, such as \eqref{1.1} and \eqref{1.3}, on the domain whose boundary is unknown and varies over time, instead of the fixed boundary domain or the whole space.

As a pioneering work where free boundary condition is incorporated into the model arising from ecology, Du and Lin \cite{DL} proposed the following problem
 \bes\label{1.4}
 \left\{\!\begin{aligned}
&u_{t}=d\Delta u+u(a-bu), & &t>0,~x\in(g(t),h(t)),\\
&u(t,g(t))=u(t,h(t))=0, & &t>0,\\
&g'(t)=-\mu u_x(t,g(t)),~ ~ h'(t)=-\mu u_x(t,h(t)), & & t>0,\\
&-g(0)=h(0)=h_0>0, ~ u(0,x)=\hat{u}_0(x), & & x\in[-h_0,h_0],
\end{aligned}\right.
 \ees
where $\hat{u}_0(x)$ is assumed to satisfy $\hat{u}_0(x)\in C^2([-h_0,h_0])$, $\hat{u}_0(\pm h_0)=0<\hat{u}_0(x)$ in $(-h_0,h_0)$. The free boundary condition $g'(t)=-\mu u_x(t,g(t))$ and $h'(t)=-\mu u_x(t,h(t))$ is usually referred to as the Stefan boundary condition. Du and Lin found that the dynamics of \eqref{1.4} is govern by a spreading-vanishing dichotomy, a new spreading phenomena resulting from reaction-diffusion model. Besides, when spreading happens, the speed was also obtained by analyzing a semi-wave problem.

As we can see, the dispersal in the above models is approximated by random diffusion $\Delta u$. Recently, it has been increasingly recognized that nonlocal diffusion is better to describe the spatial dispersal, since such diffusion operator can capture local as well as long-distance dispersal. A commonly used nonlocal diffusion operator takes the form of
\bes\label{1.6}
d\int_{\mathbb{R}^N}J(|x-y|)u(t,y)\dy-du,\ees
where $J$ is the kernel function and $d$ is the diffusion coefficient. A biological interpretation of \eqref{1.6} and its properties can be seen from, for example, \cite{AMRT,KLS,BCV,ZhangW24}.
Using operator \eqref{1.6} or its variation to model the spreading phenomenon from ecology and epidemiology has attracted much attention, and many related works have emerged over past decades. An important difference, compared to the classical reaction-diffusion equations, is that spreading speed may be infinite, known as accelerated spreading, if $J$ violates a so-called ``thin tailed'' condition. For example, please see \cite{Ya, Gar, AC, XLR}.

Replacing random diffusion $\Delta u$ in \eqref{1.4} with nonlocal diffusion operator \eqref{1.6}, Cao et al \cite{CDLL} and Cort{\'a}zar et al \cite{CQW} independently considered the following problem
\bes\left\{\begin{aligned}\label{1.7}
&u_t=d\int_{g(t)}^{h(t)}\!\!J(x-y)u(t,y)\dy-du+f(u), & &t>0,~x\in(g(t),h(t)),\\[1mm]
&u(t,x)=0,& &t>0, ~ x\notin(g(t),h(t))\\
&h'(t)=\mu\int_{g(t)}^{h(t)}\!\!\int_{h(t)}^{\infty}
J(x-y)u(t,x)\dy\dx,& &t>0,\\[1mm]
&g'(t)=-\mu\int_{g(t)}^{h(t)}\!\!\int_{-\infty}^{g(t)}
J(x-y)u(t,x)\dy\dx,& &t>0,\\[1mm]
&h(0)=-g(0)=h_0>0,\;\; u(0,x)=u_0(x),& &|x|\le h_0,
 \end{aligned}\right.
 \ees
 where kernel $J$ satisfies
 \begin{enumerate}
\item[{\bf(J)}]$J\in C(\mathbb{R})\cap L^{\yy}(\mathbb{R})$, $J(x)\ge0$, $J(0)>0$, $J$ is even, $\int_{\mathbb{R}}J(x)\dx=1$,
 \end{enumerate}
and $u_0\in C([-h_0,h_0]$, $u_0(\pm h_0)=0<u_0(x)$ in $(-h_0,h_0)$.
The nonlinear term $f$ is of the Fisher-KPP type in \cite{CDLL} and $f\equiv0$ in \cite{CQW}.  The authors in \cite{CDLL} showed that similar to \eqref{1.4}, the dynamics of \eqref{1.7} is also govern by a spreading-vanishing dichotomy. However, when spreading occurs, it was proved in \cite{DLZ} that the spreading speed of \eqref{1.7} is finite if and only if $\int_0^{\yy}xJ(x)\dx<\yy$, which is much different from \eqref{1.4} since the spreading speed of \eqref{1.4} is always finite. In addition, there are other developments on research of \eqref{1.7} along different directions. Please see a series of works of Du and Ni \cite{DN2,DN3,DN4,DN5} for spreading speed in homogeneous environment and \cite{ZLZ} for the case in periodic environment. Particularly, the following variant of \eqref{1.7} was proposed by Li et al \cite{LLW}
\bes\label{1.8}
\left\{\begin{aligned}
&u_t=d\dd\int_0^{h(t)}J(x-y)u(t,y)\dy-dj(x)u+f(u), && t>0,~0\le x<h(t),\\
&u(t,h(t))=0, && t>0,\\
&h'(t)=\mu\dd\int_0^{h(t)}\int_{h(t)}^{\infty}
J(x-y)u(t,x)\dy\dx, && t>0,\\
&h(0)=h_0,\;\; u(0,x)=u_0(x), &&0\le x\le h_0,
\end{aligned}\right.
 \ees
 where $J$ and $f$ satisfy the same conditions with \eqref{1.7}, and $j(x)=\int_0^{\yy}J(x-y)\dy$; $u_0$ meets with
  \begin{enumerate}
\item[{\bf(I)}]$u_0\in C([0,h_0]$, $u_0(h_0)=0<u_0(x)$ in $[0,h_0)$.
 \end{enumerate}
This model is derived from the assumption that the species will never jump to the area $(-\yy,0)$ which is similar to the usual homogeneous Neumann boundary condition imposed at $x=0$.

It is well known that if further $\hat{u}_0(x)$ is even, then problem \eqref{1.4} can reduce to the model \cite[(1.1)]{DL} where spatial domain has one free boundary and one fixed boundary. Hence it is natural to think whether \eqref{1.7} and \eqref{1.8} are equivalent in some sense. We shall show that \qq{1.8} cannot be transformed into \qq{1.7} in the appendix (cf. Theorem \ref{ta.1}).

Nonlocal diffusion systems composed of \eqref{1.7} have been widely utilized to model the propagation of epidemic or species in epidemiology or ecology over past decades. Please refer to, for example, \cite{ DWZ22, LLW24, LW24} for the competition, prey-predator and mutualist models, \cite{ZZLD,ZLD,DLNZ,WD1,WD2,DNW} for related problems of \eqref{1.3}, \cite{DN8,PLL} for West Nile virus, \cite{ YYW23} for SIR model, and \cite{ZhangW23} for competition model with seasonal succession. Very recently, Nguyen and Vo \cite{NV} studied the following problem
\bes\left\{\!\begin{array}{ll}\label{1.9}
u_t=d_1\!\dd\int_{g(t)}^{h(t)}J_1(x-y)u(t,y)\dy-d_1u-au+H(v), \hspace{2mm}t>0,~x\in(g(t),h(t)),\\[4mm]
v_t=d_2\!\dd\int_{g(t)}^{h(t)}J_2(x-y)v(t,y)\dy-d_2v-bv+G(u), \hspace{5mm}t>0,~x\in(g(t),h(t)),\\[4mm]
u(t,g(t))=v(t,h(t))=0, \hspace{47mm}t>0,\\[2mm]
g'(t)=-\dd\int_{g(t)}^{h(t)}\!\int_{-\yy}^{g(t)}\big[\mu
J_1(x-y)u(t,x)+\mu\rho J_2(x-y)v(t,x)\big]\dy\dx, \;\;t>0,\\[5mm]
h'(t)=\dd\int_{g(t)}^{h(t)}\!\int_{h(t)}^{\yy}\big[\mu
J_1(x-y)u(t,x)+\mu\rho J_2(x-y)v(t,x)\big]\dy\dx,\quad\;\;\; t>0,\\[4mm]
-g(0)=h(0)=h_0>0,\; u(0,x)=u_0(x),\;v(0,x)=v_0(x),\hspace{5mm}|x|\le h_0,
 \end{array}\right.
 \ees
where $H$ and $G$ satisfy the condition {\bf(H)}. The authors obtained the well-posedeness, spreading-vanishing dichotomy as well as criteria for spreading and vanishing. Especially, for the self-adjoint case, they proved the existence and variational characteristic of a principal eigenvalue by Lax-Milgram's theorem, and further got its asymptotic behaviors by using variational characteristic.

Inspired by the above works, in this paper we shall investigate the following problem
  \bes\left\{\begin{aligned}\label{1.10}
&u_t=d_1\int_0^{h(t)}\!J_1(x-y)u(t,y)\dy-d_1j_1(x)u-au+H(v), \hspace{2mm} t>0, ~ x\in[0,h(t)),\\
&v_t=d_2\int_0^{h(t)}\!J_2(x-y)v(t,y)\dy-d_2j_2(x)v-bv+G(u), \hspace{3mm} t>0, ~ x\in[0,h(t)),\\
&u(t,h(t))=v(t,h(t))=0, \hspace{53mm} t>0,\\
&h'(t)=\int_0^{h(t)}\!\int_{h(t)}^{\yy}\big[\mu_1 J_1(x-y)u(t,x)
+\mu_2 J_2(x-y)v(t,x)\big]\dy\dx, \;\; t>0,\\
&h(0)=h_0>0,~ u(0,x)=u_0(x), ~ v(0,x)=v_0(x),\; ~ x\in[0,h_0],\\
 \end{aligned}\right.
 \ees
where all parameters are positive, $J_i$ satisfies the condition {\bf(J)}, and $j_i(x)=\int_0^{\yy}J_i(x-y)\dy$ for $i=1,2$.
Condition {\bf (I)} holds for $u_0$ and $v_0$. In this paper we assume that $H$ and $G$ satisfy  the following condition {\bf(H1)}, which is weaker than {\bf(H)},\vspace{-1.5mm}
   \begin{enumerate}
\item[{\bf(H1)}]\; $H,G\in C^1([0,\yy))$, $H(0)=G(0)=0$, $H'(z),G'(z)>0$ in $[0,\yy)$, ${H(z)}/z$ is decreasing in $z>0$, and ${G(z)}/z$ is strictly decreasing in $z>0$, and $G(H(\hat z)/a)<b\hat{z}$ for some $\hat{z}>0$.\vspace{-1.5mm}
 \end{enumerate}
This condition allows $H(v)=cv$, but condition {\bf(H)} does not include this case. Moreover, under the condition {\bf(H1)}, positive equilibrium $(U,V)$ also exists uniquely if $\mathcal{R}_0>1$.  Throughout this paper, we always assume that {\bf(H1)}, {\bf(J)} and {\bf(I)} hold.

By using similar methods as in \cite{DN8,NV} we can prove that
the problem \eqref{1.10} has a unique global solution $(u,v,h)$. Moreover, $(u,v)\in [C([0,\yy)\times[0,h(t)])]^2$, $h\in C^1([0,\yy))$, $0< u(t,x)\le M_1$, $0< v(t,x)\le M_2$ in $[0,\yy)\times[0,h(t))$ with some $M_1,M_2>0$, and $h'(t)>0$ for all $t\ge 0$. Thus $h_{\yy}:=\dd\lim_{t\to\yy}h(t)\in(h_0,\yy]$ is well defined. If $h_{\yy}<\yy$, we call {\it vanishing}; otherwise we call {\it spreading}.

In order to know as much as possible about the dynamics of \eqref{1.10}, in Section \ref{s2} we investigate the eigenvalue problem $\mathcal{L}[\varphi]=\lambda\varphi$ where operator $\mathcal{L}$ is defined by \eqref{2.1}. The existence of principal eigenvalue is obtained by using the arguments in \cite{SWZ}. When operator $\mathcal{L}$ is self-adjoint, we also get the related variational characteristic which is only used to show the monotonicity of principal eigenvalue on diffusion coefficient. More importantly, a rather complete understanding for the asymptotic behaviors about spatial domain and diffusion coefficients, which is crucial for studying the criteria for spreading and vanishing of \eqref{1.10}, is derived by a series of elementary analysis without assuming that $\mathcal{L}$ is self-adjoint.

With the help of principal eigenvalue, in Section \ref{s3} we first investigate the steady state problem associated to \eqref{1.10}, and then prove that the dynamics of evolutionary problem is determined completely by the sign of principal eigenvalue. Especially, when the principal eigenvalue is non-positive, it will be proved that $(0,0)$ is exponentially (principal eigenvalue is negative) or algebraically (principal eigenvalue is zero) stable.

In Section \ref{s4}, we establish the spreading-vanishing dichotomy, and give the long time behaviors of solution component $(u,v)$ and a rather complete description of criteria for spreading and vanishing by using the conclusions obtained in Sections 2 and 3.

When spreading happens, spreading speed is considered in Section \ref{s5}. We prove that the asymptotic spreading speed of solution component $(u,v)$ is equal to the spreading speed of free boundary which is finite if and only if a threshold condition holds for kernel functions.

Section 6 involves a discussion on the relations of \eqref{1.7} and \eqref{1.8}.}

Before ending the introduction, we emphasize the difference between \qq{1.9} and \qq{1.10}. Firstly, there is only one free boundary in \eqref{1.10} and no agents cross the fixed boundary $x=0$, which implies that agents can only expand their habitat to right side, while \eqref{1.9} allows agents to expand to both sides. Secondly, problem \eqref{1.9} is spatially homogeneous while problem \eqref{1.10} is spatially non-homogeneous, and \qq{1.10} cannot be transformed into \qq{1.9} by Theorem \ref{ta.1}. Thirdly, the eigenvalue problem corresponding to problem \qq{1.9} has constant coefficients, and its principal eigenvalue has shift  invariance, i.e., the principal eigenvalue defined on the interval $(l_1, l_2)$  depends only on the length $l_2-l_1$ but not on the position of $(l_1, l_2)$; whereas problem \qq{1.10} does not have such a good property.

\section{An eigenvalue problem associated to \eqref{1.10}}\lbl{s2}
{\setlength\arraycolsep{2pt}

For later discussion about the dynamics of \eqref{1.10}, in this section, we first study an eigenvalue problem of a cooperative system with nonlocal diffusion. In particular, without assuming that the operator is self-adjoint, we obtain a rather complete understanding of asymptotic behaviors of the principal eigenvalue which is expected to be useful in other cooperative nonlocal diffusion problems.

For any $a_{11},a_{22}\in\mathbb{R}$, $l>0$, $a_{12},a_{21}>0$, $d_1,d_2\ge0$ and $d_1+d_2>0$, we define the following nonlocal operator
 \bes\label{2.1}
 \mathcal{L}[\phi](x):=\mathcal{P}[\phi](x)+H(x)\phi(x), ~ ~x\in[0,l],\ees
where $\phi=(\phi_1,\phi_2)^T$,
 \[\mathcal{P}[\phi](x)=\begin{pmatrix}
 d_1\int_0^lJ_1(x-y)\phi_1(y)\dy \\
 d_2\int_0^lJ_2(x-y)\phi_2(y)\dy
 \end{pmatrix},~ ~ H(x)=\begin{pmatrix}
  -d_1j_1(x)+a_{11} & a_{12} \\
   a_{21} & -d_2j_2(x)+a_{22}
  \end{pmatrix}.\]}
Since we assume $d_1+d_2>0$ and $d_i\ge0$ for $i=1,2$, our results below can be used to handle some degenerate cooperative systems, such as \cite{ZZLD,LSch}. For clarity, we make some notations as follows.{\setlength\arraycolsep{2pt}
 \bess
E&=&[L^2([0,l])]^2, ~ \langle\phi,\psi\rangle=\sum_{i=1}^2\int_0^l\phi_i(x)\psi_i(x)\dx, ~  \|\phi\|_2=\sqrt{\langle\phi,\phi\rangle}, ~ X=[C([0,l])]^2,\\
X^+&=&\{\phi\in X: \phi_1\ge0, \phi_2\ge0 ~ {\rm in ~ }[0,l]\}, ~ X^{++}=\{\phi\in X: \phi_1>0, \phi_2>0 ~ {\rm in ~ }[0,l]\}.
 \eess

Now we are in the position to study the eigenvalue problem $\mathcal{L}[\phi]=\lambda\phi$.
It is well known that $\lambda$ is a principal eigenvalue if it is simple and its corresponding eigenfunction $\phi$ belongs to $X^{++}$. In the following, we first give the existence and some properties for principal eigenvalue of \eqref{2.1} by using the results in \cite{SWZ} whose proofs are inspired by the arguments in \cite{LCW}. When $\mathcal{L}$ is self-adjoint, we get a variational characteristic by following lines in the proofs of \cite[Theorem 2.3]{NV} and \cite[Theorem 3.1]{HMMV}, but our arguments are more concise than them.

\begin{proposition}\label{p2.1} Let $\mathcal{L}$ be defined as above. Then the following statements are valid.\vspace{-1.5mm}
 \begin{enumerate}[$(1)$]
 \item $\lambda_{p}$ is an eigenvalue of operator $\mathcal{L}$ with a corresponding eigenfunction $\phi_p\in X^{++}$, where
 \[\lambda_p=\inf\{\lambda\in\mathbb{R}: \mathcal{L}[\phi](x)\le\lambda\phi(x) ~ {\rm in } ~ [0,l] { \rm ~ for ~ some ~ } \phi\in X^{++}\}.\vspace{-1.5mm}\]
 \item The algebraic multiplicity of $\lambda_p$ is equal to one. Namely, $\lambda_p$ is simple.\vspace{-1.5mm}
 \item If there exists an eigenpair $(\lambda,\phi)$ of $\mathcal{L}$ with $\phi\in X^+\setminus\{(0,0)\}$, then $\lambda=\lambda_p$ and $\phi$ is a positive constant multiple of $\phi_p$.\vspace{-1.5mm}
 \item Suppose $a_{12}=a_{21}$, which implies that $\mathcal{L}$ is  self-adjoint. Then we have the variational characteristic $\lambda_p=\sup_{\|\phi\|_2=1}\langle\mathcal{L}[\phi],\phi\rangle$.
 \end{enumerate}
\end{proposition}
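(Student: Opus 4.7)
The plan for parts $(1)$--$(3)$ is to reduce to a Krein--Rutman-type statement for a strongly positive bounded linear operator. First I would shift by a large constant $K > \max_i \sup_{[0,l]}\{d_i j_i - a_{ii}\}$ so that $\tilde{\mathcal{L}} := \mathcal{L} + K I$ satisfies $\tilde{\mathcal{L}}(X^+ \setminus \{0\}) \subset X^{++}$; the off-diagonal couplings $a_{12}, a_{21} > 0$ are crucial here, since even if one component of $\phi \in X^+$ vanishes, the cooperative coupling still feeds strict positivity into that component of $\tilde{\mathcal{L}}[\phi]$. Then I would apply the positive-operator framework of \cite{SWZ} (itself adapted from \cite{LCW}) to produce a principal eigenvalue $\tilde\lambda_p = \lambda_p + K$ of $\tilde{\mathcal{L}}$, with eigenfunction $\phi_p \in X^{++}$, algebraic multiplicity one, and admitting the stated Collatz--Wielandt characterization after unshifting; this simultaneously proves $(1)$ and $(2)$.

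For $(3)$, let $\phi_p^* \in X^{++}$ denote the positive principal eigenfunction of the formal adjoint $\mathcal{L}^*$, constructed by the same procedure applied to the transposed coefficient matrix (which is again cooperative and irreducible, so the framework applies). Pairing $\mathcal{L}[\phi] = \lambda \phi$ with $\phi_p^*$ and $\mathcal{L}^*[\phi_p^*] = \lambda_p \phi_p^*$ with $\phi$ yields $(\lambda - \lambda_p)\langle \phi, \phi_p^*\rangle = 0$; since $\phi_p^*$ is strictly positive and $\phi \in X^+ \setminus \{(0,0)\}$, the pairing is strictly positive, forcing $\lambda = \lambda_p$, and simplicity from $(2)$ then gives $\phi \in \mathbb{R}_+\phi_p$ (cooperative coupling upgrades nontriviality of one component to strict positivity of both). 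For $(4)$, set $\mu := \sup_{\|\phi\|_2=1}\langle \mathcal{L}[\phi], \phi\rangle$; testing with $\phi_p/\|\phi_p\|_2$ gives $\mu \ge \lambda_p$ immediately. For the reverse inequality I would first reduce to nonnegative test functions: for any $\phi \in E$, the componentwise absolute value $|\phi| = (|\phi_1|,|\phi_2|)$ satisfies $\langle \mathcal{L}[|\phi|], |\phi|\rangle \ge \langle \mathcal{L}[\phi], \phi\rangle$, because $J_i \ge 0$ forces the diffusion quadratic forms to grow and the cross-term $2a_{12}\int \phi_1\phi_2$ grows since $a_{12} = a_{21} > 0$. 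For $\phi \ge 0$, I would then approximate by $\phi_\epsilon := \phi + \epsilon \phi_p \in X^{++}$, use self-adjointness (so $\phi_p^* = \phi_p$) and a Picone-type pointwise inequality relating $\mathcal{L}[\phi_\epsilon]/\phi_\epsilon$ to $\mathcal{L}[\phi_p]/\phi_p = \lambda_p$, integrate against $\phi_\epsilon$, and let $\epsilon \to 0^+$ to conclude $\langle \mathcal{L}[\phi], \phi\rangle \le \lambda_p \|\phi\|_2^2$.

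The main obstacle is the lack of compactness of the integral operator $\mathcal{P}$ on the continuous-function space $X$ (continuous kernels on a bounded interval yield compactness on $L^2$, but not on $C$), which blocks a direct invocation of the classical Krein--Rutman theorem; this is exactly the difficulty resolved by the positive-operator framework of \cite{SWZ}. A secondary subtlety in $(4)$ is that $\lambda_p$ need not lie above the essential spectrum of $\mathcal{L}$ on $E$, so its identification with $\sup \sigma(\mathcal{L})$ cannot be obtained by merely extracting a Rayleigh-quotient maximizer; the plan sidesteps this by comparing arbitrary test functions to the explicit eigenfunction $\phi_p$ via the Collatz--Wielandt inequality rather than attempting to extract a strong limit, yielding an argument noticeably shorter than the Lax--Milgram-based approach of \cite{NV}.
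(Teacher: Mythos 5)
Your plan for parts (1)--(2) has two genuine gaps. First, the shift $\tilde{\mathcal{L}}=\mathcal{L}+KI$ does \emph{not} produce a strongly positive operator: condition {\bf(J)} only guarantees $J_i(0)>0$, not $J_i>0$ on all of $[-l,l]$, so for $\phi=(\phi_1,0)$ with $\phi_1$ supported in a small subinterval both components of $\tilde{\mathcal{L}}[\phi]$ vanish outside a neighbourhood of that subinterval. More importantly, even after irreducibility is sorted out, existence of a principal eigenpair for this class of operators (compact integral part plus multiplication part) is simply not automatic: the supremum of the spectrum can fail to be an eigenvalue, and the framework of \cite{SWZ} that you invoke only delivers the conclusion under a quantitative hypothesis on the coefficient matrix, namely that $1/(\max_{[0,l]}\beta-\beta(x))\notin L^1([0,l])$ where $\beta(x)$ is the principal eigenvalue of $H(x)$. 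Verifying this — the paper computes $\beta(x)$ explicitly and uses $\beta'\le0$ with $\beta'(0)<0$, so that $\max\beta-\beta(x)$ vanishes linearly at $x=0$ — is the actual content of the proof in the case $d_1d_2>0$, and your proposal never checks it. Second, the proposition is stated for $d_i\ge0$ with $d_1+d_2>0$, and the partially degenerate case $d_1=0$ or $d_2=0$ is not covered by a direct citation: the paper handles it separately by constructing the eigenpair by hand from the scalar principal eigenvalue of the nondegenerate component (via \cite[Lemma 2.6]{LLW}) and then proving $\lambda_p^*=\lambda_p$ through an explicit pairing argument. Your proposal omits this case entirely.

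For (3), pairing with an adjoint principal eigenfunction is a legitimate alternative to the paper's citation of \cite[Theorem 1.4]{SWZ}, but it presupposes that the adjoint problem possesses a principal eigenpair with the \emph{same} eigenvalue, which again rests on the existence theory above. For (4), your route is genuinely different from the paper's: the paper shows directly that $\lambda_0=\sup_{\|\phi\|_2=1}\langle\mathcal{L}[\phi],\phi\rangle$ is attained, by producing via Cauchy--Schwarz a nonnegative sequence with $\|\lambda_0\varphi^n-\mathcal{L}[\varphi^n]\|_2\to0$, using compactness of $\mathcal{P}:E\to X$ and invertibility of $\lambda_0I-H$ (which needs the preliminary estimate $\lambda_0>\beta(0)$) to pass to a limit eigenfunction in $X^+\setminus\{(0,0)\}$, and then invoking (3). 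Your Picone approach does work and is arguably cleaner: writing $w_i=\phi_i/\psi_i$ with $\psi=(\psi_1,\psi_2)$ the principal eigenfunction, one gets $\lambda_p\|\phi\|_2^2-\langle\mathcal{L}[\phi],\phi\rangle=\sum_i\frac{d_i}{2}\int_0^l\int_0^lJ_i(x-y)\psi_i(x)\psi_i(y)(w_i(x)-w_i(y))^2\,{\rm d}y{\rm d}x+a_{12}\int_0^l\psi_1\psi_2(w_1-w_2)^2\,{\rm d}x\ge0$ for \emph{every} $\phi\in E$, so neither the reduction to $|\phi|$ nor the $\epsilon$-regularization is needed ($\psi_i$ is continuous and bounded below on $[0,l]$). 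But this argument presupposes part (1), so the gaps identified above remain the essential obstruction.
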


\begin{proof}We will prove conclusions (1)-(3) by two cases, Case 1: $d_1d_2>0$, and Case 2: $d_1=0$ or $d_2=0$.  Clearly, Case 2 is referred to as the partially degenerate case.

{\it Case 1: $d_1d_2>0$}. In this case, we note that conclusions (1)-(3) follow directly from \cite[Corollary 1.3 and Theorem 1.4]{SWZ}. In fact, it is easy to check that $H(x)$ is strongly irreducible in $[0,l]$ for any $l>0$. Hence it remains to show
  \bes\label{2.3}
  \frac1{\max_{[0,l]}\beta(x)-\beta(x)}\notin L^1([0,l]),
  \ees
where $\beta(x)$ is an eigenvalue of $H(x)$ and the maximum of real parts of all eigenvalues of $H(x)$.

Notice that $j'_i(x)=J_i(x)$ for $i=1,2$. Simple computations yield
 \bess \beta(x)&=&\dd\frac{-(d_1j_1(x)-a_{11}+d_2j_2(x)-a_{22})
 +\sqrt{(d_1j_1(x)-a_{11}-d_2j_2(x)+a_{22})^2+4a_{12}a_{21}}}2,\\[2mm]
 \beta'(x)&\le&0 ~ ~ {\rm and}~ ~ \beta'(0)<0,\eess
 which implies \eqref{2.3}, and conclusions (1)-(3) are derived in this case.

{\it Case 2: $d_1=0$ or $d_2=0$}. Without loss of generality, we suppose that $d_1=0<d_2$. By \cite[Lemma 2.6]{LLW}, the eigenvalue problem
\[d_2\int_0^lJ_2(x-y)\omega(y)\dy-d_2j_2(x)\omega+a_{22}\omega=\zeta\omega\]
has a principal eigenvalue $\zeta$ with a corresponding positive eigenfunction $\omega\in C([0,l])$. Let
\[\lambda^*_p=\frac{a_{11}+\zeta+\sqrt{(a_{11}-\zeta)^2+4a_{12}a_{21}}}2, ~ ~\phi_1=\frac{a_{12}\omega}{\lambda^*_p-a_{11}}, ~ ~\phi_2=\omega, ~ ~ \phi=(\phi_1,\phi_2)^T.\]
It is easy to see that $\lambda^*_p>a_{11}$ and  $\mathcal{L}[\phi]=\lambda^*_p\phi$.

Then we show $\lambda^*_p=\lambda_p$. From the definition of $\lambda_p$, we know $\lambda_p\le\lambda^*_p$. It thus remains to prove $\lambda_p\ge\lambda^*_p$. For any triplet $(\lambda,\psi_1,\psi_2)$ with $\psi=(\psi_1,\psi_2)\in X^{++}$ and $\mathcal{L}[\psi]\le\lambda\psi$. We shall prove $\lambda\ge\lambda^*_p$ which, combined with the definition of $\lambda_p$, leads to our desired result.

Denote $\int_0^lf(x)g(x)\dx$ by $\langle f,g\rangle$ for $f,g\in L^2([0,l])$. Then we have
\bess&&\langle\lambda^*_p\phi_1,\psi_1\rangle-a_{12}\langle\phi_2,\psi_1\rangle
=\langle\phi_1,a_{11}\psi_1\rangle\le\langle\phi_1,\lambda\psi_1-a_{12}\psi_2\rangle
=\langle\lambda\phi_1,\psi_1\rangle-a_{12}\langle\phi_1,\psi_2\rangle,
\eess
which leads to
\bes\label{2.4}(\lambda^*_p-\lambda)\langle\phi_1,\psi_1\rangle\le a_{12}\langle\phi_2,\psi_1\rangle-a_{12}\langle\phi_1,\psi_2\rangle.
\ees
Moreover,
\bess
\langle\lambda^*_p\phi_2-a_{21}\phi_1-a_{22}\phi_2,\psi_2\rangle&=&\left\langle\phi_2,\; d_2\int_0^lJ_2(x-y)\psi_2(y)\dy-d_2j_2(x)\psi_2\right\rangle\\[1mm]
&\le&\langle\phi_2,\lambda\psi_2-a_{21}\psi_1-a_{22}\psi_2\rangle,
\eess
which yields
 \bess
(\lambda^*_p-\lambda)\langle\phi_2,\psi_2\rangle\le a_{21}\langle\phi_1,\psi_2\rangle-a_{21}\langle\phi_2,\psi_1\rangle.
 \eess
Combining this with \eqref{2.4} gives
\[(\lambda^*_p-\lambda)\kk(\frac{\langle\phi_1,\psi_1\rangle}{a_{12}}
+\frac{\langle\phi_2,\psi_2\rangle}{a_{21}}\rr)\le0,\]
which, together with the fact that $a_{12}>0$, $a_{21}>0$, $\langle\phi_1,\psi_1\rangle>0$ and $\langle\phi_2,\psi_2\rangle>0$, arrives at $\lambda^*_p\le\lambda$. Therefore, $\lambda_p=\lambda^*_p$, and $\lambda_p$ is an eigenvalue of $\mathcal{L}$ with corresponding eigenfunction $\phi\in X^{++}$. That is, conclusion (1) is obtained. Then conclusions (2) and (3) can be deduced by \cite[Theorem 1.4]{SWZ}.

{\rm (4)} Assume $a_{12}=a_{21}$. For convenience, we denote $\lambda_0=\sup_{\|\phi\|_2=1}\langle\mathcal{L}[\phi],\phi\rangle$.
Clearly, $\lambda_0$ is well defined. It suffices to show that $\lambda_0$ is an eigenvalue of $\mathcal{L}$ with an eigenfunction in $X^+\setminus\{(0,0)\}$.

To this end, we first prove $\lambda_0>\beta(0)$.
Let
\[\alpha=\frac{2a_{12}}{{d_2}/2+a_{22}-{d_1}/2-a_{11}+\sqrt{({d_1}/2
+a_{11}-{d_2}/2-a_{22})^2+4a^2_{12}}}.\]
By \cite[Lemma 2.6]{LLW}, there is a positive function $\varphi_1\in C([0,l])$ with  $\int_0^l(1+\alpha^2)\varphi^2_1\dx=1$ such that
 \bess
&&\int_0^l\!\int_0^l\big[d_1J_1(x\!-\!y)
\!+\!\alpha^2d_2J_2(x\!-\!y)\big]
\varphi_1(y)\varphi_1(x)\dy\dx
\!-\!\int_0^l\!\big[d_1j_1(x)+\alpha^2d_2j_2(x)\big]\varphi^2_1\dx\\
&\ge&-\frac{d_1+\alpha^2d_2}2
\int_0^l\varphi^2_1\dx.
 \eess
Let $\varphi=(\varphi_1,\alpha\varphi_1)^T$ be the testing function. Clearly, $\|\varphi\|_2=1$. Simple computations yield
  \bess
\lambda_0&=&\sup_{\|\psi\|_2=1}\langle\mathcal{L}[\psi],\psi\rangle\ge \langle\mathcal{L}[\varphi],\varphi\rangle\\
&=&\int_0^l\int_0^l\big[d_1J_1(x-y)+\alpha^2d_2J_2(x-y)\big]\varphi_1(y)\varphi_1(x)\dy\dx
-\int_0^l\big[d_1j_1(x)+\alpha^2d_2j_2(x)\big]\varphi^2_1\dx\\
&&+(\alpha a_{12}-a_{11})\int_0^l\varphi^2_1\dx\dx+\alpha(a_{21}-\alpha a_{22})\int_0^l\varphi^2_1\dx\\
&>&\int_0^l\big[2a_{12}\alpha-{d_1}/2-a_{11}
-({d_2}/2+a_{22})\alpha^2\big]\varphi^2_1\dx=\beta(0).\eess
Thus $\lambda_0>\beta(0)$.

By virtue of $a_{12}=a_{21}$ and the definition of $\lambda_0$, we see that $\langle\lambda_0\varphi-\mathcal{L}[\varphi],\psi\rangle$ is bilinear, symmetric and $\langle\lambda_0\varphi-\mathcal{L}[\varphi],\varphi\rangle\ge0$. So by Cauchy-Schwarz inequality, we have
 \bess
|\langle\lambda_0\varphi-\mathcal{L}[\varphi],\psi\rangle|\le\langle\lambda_0\varphi-\mathcal{L}[\varphi],\varphi
\rangle^{\frac12}\langle\lambda_0\psi-\mathcal{L}[\psi],\psi\rangle^{\frac12}\le \langle\lambda_0\varphi-\mathcal{L}[\varphi],\varphi\rangle^{\frac12}\|\lambda_0I-\mathcal{L}\|^{\frac{1}{2}}\|\psi\|_2,
\eess
which yields $\|\lambda_0\varphi-\mathcal{L}[\varphi]\|_2\le\langle\lambda_0\varphi
 -\mathcal{L}[\varphi],\varphi\rangle^{\frac12}\|\lambda_0I-\mathcal{L}\|^{\frac{1}{2}}$.
Together with the definitions of $\lambda_0$ and $\mathcal{L}$, we derive that there exists a nonnegative sequence $\{\varphi^n\}$ with $\|\varphi^n\|_2=1$ such that
  \bes\label{2.6}
  \|\lambda_0\varphi^n-\mathcal{L}[\varphi^n]\|_2\to0 ~ ~{\rm  as ~}  n\to\yy.\ees

For convenience, let $\mathcal{T}[\varphi]=(\lambda_0I-H)[\varphi]$. By Arzel{\`a}-Ascoli Theorem, $\mathcal{P}$ is compact and maps $E$ to $X$. Thus there exists a subsequence of $\{\varphi^n\}$, still denoted by itself, such that $\mathcal{P}[\varphi^n]\to \bar\varphi$ for some $\bar\varphi\in X$. Moreover, owing to $\lambda_0>\beta(0)$, we have that $\mathcal{T}$ has a bounded and linear inverse $\mathcal{T}^{-1}$. Define $\mathcal{T}^{-1}[\bar\varphi]=\theta$. Clearly, $\theta\in X$. So $\lim_{n\to\yy}\mathcal{T}^{-1}[\mathcal{P}[\varphi^n]]
=\mathcal{T}^{-1}[\bar\varphi]=\theta$ in $X$.  Notice that
 \[\mathcal{T}^{-1}[\mathcal{P}[\varphi^n]]-\varphi^n
=\mathcal{T}^{-1}[\mathcal{P}[\varphi^n]-\mathcal{T}[\varphi^n]]
 =\mathcal{T}[\mathcal{L}[\varphi^n]-\lambda_0\varphi^n].\]
 Thanks to \eqref{2.6}, $\lim_{n\to\yy}\varphi^n=\theta$ in $E$, which combined with the fact that $\varphi^n$ is nonnegative and $\theta\in X$, leads to $\theta\in X^+$. Therefore, $\mathcal{T}^{-1}[\mathcal{P}[\theta]]=\theta$, namely, $\mathcal{L}[\theta]=\lambda_0\theta$. Noticing $\|\theta\|_2=1$, we see that $\lambda_0$ is an eigenvalue of $\mathcal{L}$ with an eigenfunction $\theta\in X^+\setminus\{(0,0)\} $. Then by conclusion (3), $\lambda_p=\lambda_0$. The proof is complete.
\end{proof}

Then we investigate the dependence of $\lambda_p$ on interval $[0,l]$ and diffusion coefficients $d_1$ and $d_2$, respectively. Let
\[A=\begin{pmatrix}
  a_{11} & a_{12} \\
   a_{21} & a_{22}
    \end{pmatrix}, ~ ~
    B=\begin{pmatrix}
  -d_1/2+a_{11} & a_{12} \\
  a_{21} & -d_2/2+a_{22}
  \end{pmatrix}.\]
Direct computations show there exist $\gamma_A, \gamma_B\in\mathbb{R}$, $\theta_A>0$ and $\theta_B>0$ satisfying
\bes\left\{\begin{aligned}\label{2.7}
&\gamma_A=\frac{a_{11}+a_{22}+\sqrt{(a_{11}+a_{22})^2
+4[a_{12}a_{21}-a_{11}a_{22}]}}2,\\[1mm]
&\gamma_B=\frac{a_{11}-\frac{d_1}2+a_{22}-\frac{d_2}2
+\sqrt{(a_{11}-\frac{d_1}2+a_{22}-\frac{d_2}2)^2+4\big[a_{12}a_{21}-(a_{11}-\frac{d_1}2)
(a_{22}-\frac{d_2}2)\big]}}2,\\[1mm] &\theta_A=\frac{a_{12}}{\gamma_A-a_{11}},\;\;
\theta_B=\frac{a_{12}}{\gamma_B+\df{d_1}{2}-a_{11}}, ~ (\gamma_AI-A)(\theta_A,1)^T=0, ~ ~ (\gamma_BI-B)(\theta_B,1)^T=0.
    \end{aligned}\right.\quad
    \ees

The following lemma will be often used in our later arguments.

\begin{lemma}\label{l2.1}Let $\lambda_p$ be the principal eigenvalue of \eqref{2.1}. Then the following statements are valid.\vspace{-1.5mm}
 \begin{enumerate}[$(1)$]
\item If there exist $\phi=(\phi_1,\phi_2)^T\in X$ with $\phi_1,\phi_2\ge,\not\equiv0$ and $\lambda\in\mathbb{R}$ such that $\mathcal{L}[\phi]\le\lambda\phi$, then $\lambda_p\le\lambda$. Moreover, $\lambda_p=\lambda$ only if $\mathcal{L}[\phi]=\lambda\phi$.\vspace{-1.5mm}
\item If there exist $\phi=(\phi_1,\phi_2)^T\in X^+\setminus\{(0,0)\}$ and $\lambda\in\mathbb{R}$ such that $\mathcal{L}[\phi]\ge\lambda\phi$, then $\lambda_p\ge\lambda$. Moreover, $\lambda_p=\lambda$ only if $\mathcal{L}[\phi]=\lambda\phi$.\vspace{-1.5mm}
   \end{enumerate}
   \end{lemma}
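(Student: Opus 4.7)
My plan is to prove both conclusions by the classical device of pairing against the principal eigenfunction of the formal adjoint of $\mathcal{L}$. Since each $J_i$ is even, the nonlocal part $\mathcal{P}$ in \eqref{2.1} is self-adjoint on $E$, while $H(x)$ acts pointwise as a matrix; hence the adjoint $\mathcal{L}^*$ of $\mathcal{L}$ has exactly the form of \eqref{2.1} but with $a_{12}$ and $a_{21}$ interchanged. The structural hypotheses on $\mathcal{L}$ are preserved under this swap, so Proposition \ref{p2.1} applied to $\mathcal{L}^*$ yields a principal eigenvalue $\lambda_p^*$ together with a strictly positive eigenfunction $\phi^*\in X^{++}$. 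Testing $\mathcal{L}[\phi_p]=\lambda_p\phi_p$ against $\phi^*$ and $\mathcal{L}^*[\phi^*]=\lambda_p^*\phi^*$ against $\phi_p$ shows $\lambda_p\langle\phi_p,\phi^*\rangle=\lambda_p^*\langle\phi_p,\phi^*\rangle$, and since $\phi_p,\phi^*\in X^{++}$ force $\langle\phi_p,\phi^*\rangle>0$, we obtain $\lambda_p=\lambda_p^*$.

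For conclusion (1), I would take the $E$-inner product of $\mathcal{L}[\phi]\le\lambda\phi$ against $\phi^*$ and transfer $\mathcal{L}$ onto $\phi^*$ via the adjoint identity: $\lambda_p\langle\phi,\phi^*\rangle=\langle\phi,\mathcal{L}^*[\phi^*]\rangle=\langle\mathcal{L}[\phi],\phi^*\rangle\le\lambda\langle\phi,\phi^*\rangle$. The hypotheses $\phi_1,\phi_2\ge,\not\equiv 0$ together with $\phi^*\in X^{++}$ make $\langle\phi,\phi^*\rangle$ strictly positive, so $\lambda_p\le\lambda$ follows immediately. In the equality case $\lambda_p=\lambda$, one has $\int_0^l(\lambda\phi-\mathcal{L}[\phi])\cdot\phi^*\,\dx=0$, but the integrand is a continuous nonnegative vector dotted with a strictly positive one, so each component of $\lambda\phi-\mathcal{L}[\phi]$ must vanish identically on $[0,l]$, giving $\mathcal{L}[\phi]=\lambda\phi$.

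Conclusion (2) will be the mirror argument: pairing $\mathcal{L}[\phi]\ge\lambda\phi$ against $\phi^*$ and applying the same adjoint identity yields $\lambda_p\langle\phi,\phi^*\rangle\ge\lambda\langle\phi,\phi^*\rangle$; positivity of $\langle\phi,\phi^*\rangle$, still guaranteed because $\phi\in X^+\setminus\{(0,0)\}$ and $\phi^*\in X^{++}$, then gives $\lambda_p\ge\lambda$, and the equality case follows from the same continuous-nonnegative-times-strictly-positive integrand argument.

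The main obstacle, and really the only nontrivial step, is the preliminary construction of $\phi^*\in X^{++}$ together with the identification $\lambda_p=\lambda_p^*$; once these are in hand, both comparison inequalities reduce to one-line computations. Notably, neither requires the symmetry hypothesis $a_{12}=a_{21}$ nor the variational characterization of Proposition \ref{p2.1}(4), which is precisely what is needed to apply the lemma in the general, non-self-adjoint setting used throughout the later sections of the paper.
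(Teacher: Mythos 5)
Your argument is correct, and every step checks out: since each $J_i$ is even, $\mathcal{P}$ is self-adjoint on $E$, so $\mathcal{L}^*=\mathcal{P}+H^T(x)$ is again of the form \eqref{2.1} with $a_{12}$ and $a_{21}$ interchanged (both still positive), Proposition \ref{p2.1} supplies $\phi^*\in X^{++}$, the pairing $\lambda_p\langle\phi_p,\phi^*\rangle=\langle\phi_p,\mathcal{L}^*[\phi^*]\rangle=\lambda_p^*\langle\phi_p,\phi^*\rangle$ identifies $\lambda_p=\lambda_p^*$, and the strict positivity of $\langle\phi,\phi^*\rangle$ under either hypothesis on $\phi$ closes both inequalities; the equality cases follow because a continuous, pointwise nonnegative integrand with zero integral vanishes identically. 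Note, however, that the paper does not actually write out a proof: it delegates to \cite[Lemma 2.2]{DN8} ``with some obvious modifications.'' The argument in that reference (and in the analogous lemmas of \cite{CDLL,NV}) is of a different flavour: one compares $\phi$ with the principal eigenfunction $\phi_p$ by scaling, i.e.\ takes the extremal $\kappa$ with $\kappa\phi_p\le\phi$ (or the reverse) and extracts a contradiction at a touching point using the strong positivity of the nonlocal operator. Your adjoint-pairing route is a genuine alternative: it trades the touching-point case analysis for the one extra construction of $\phi^*$, after which both directions and both rigidity statements drop out of a single integration-by-duality identity, and it transparently requires no self-adjointness ($a_{12}=a_{21}$) — exactly the generality in which the lemma is invoked later. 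The touching-point argument, by contrast, avoids introducing the adjoint altogether and is the one the authors implicitly intend. Either is acceptable; yours is complete as written.
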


\begin{proof}    By arguing as in the proof of \cite[Lemma 2.2]{DN8} with some obvious modifications, we can prove this result. So the details are ignored. \end{proof}

It is worthy mentioning that in Lemma \ref{l2.1}(2), we only need $\phi=(\phi_1,\phi_2)\in X^+\setminus\{(0,0)\}$ which implies that one of $\phi_1$ and $\phi_2$ is allowed to be identical to zero. This will be used later.

Now we are in the position to show the dependence of $\lambda_p$ on interval $[0,l]$, and thus rewrite $\lambda_p$ as $\lambda_p(l)$ to stress the relationship of $\lambda_p$ about $[0,l]$. We note that unlike those arguments in the proofs of \cite[Proposition 3.4]{CDLL} and \cite[Proposition 2.7]{NV}, the methods we use here are elementary analysis without resorting to variational characteristic. So we don't assume $a_{12}=a_{21}$ in the following result.

\begin{proposition}\label{p2.2} Let $\lambda_p(l)$ be the principal eigenvalue of \eqref{2.1}. Then the following results hold.\vspace{-1.5mm}
\begin{enumerate}[$(1)$]
\item $\lambda_p(l)$ is continuous and strictly increasing with respect to $l>0$.\vspace{-1.5mm}
\item $\lim_{l\to\yy}\lambda_{p}(l)=\gamma_A$,  where $\gamma_A$ is given by \eqref{2.7}.\vspace{-1.5mm}
\item $\lim_{l\to0}\lambda_p(l)=\gamma_B$, where $\gamma_B$ is given by \eqref{2.7}.\vspace{-1.5mm}
\end{enumerate}
\end{proposition}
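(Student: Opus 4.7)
The plan is to use Lemma~\ref{l2.1} as the engine throughout: for each claim, I will construct a trial function $\phi \in X^+ \setminus \{(0,0)\}$ satisfying a sub- or super-solution inequality and read off the bound on $\lambda_p(l)$. The key algebraic fact I will exploit repeatedly is
\[
\int_0^l J_i(x-y)\,dy - j_i(x) = -\int_l^\infty J_i(x-y)\,dy \le 0,
\]
which captures how the nonlocal operator feels the cut-off at $l$. This avoids variational characterizations and in particular does not need the self-adjoint assumption $a_{12}=a_{21}$.

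For (1), given $0 < l_1 < l_2$ I would take the principal eigenfunction $\phi^{l_1}\in X^{++}$ on $[0,l_1]$ and extend it by zero to $[0,l_2]$. On $[0,l_1]$ the extension $\tilde\phi$ satisfies $\mathcal{L}_{l_2}[\tilde\phi] \ge \mathcal{L}_{l_1}[\phi^{l_1}] = \lambda_p(l_1)\tilde\phi$ because enlarging the interval only adds nonnegative mass to the integral, and on $(l_1,l_2]$ one has $\tilde\phi=0$ while $\mathcal{L}_{l_2}[\tilde\phi] \ge 0$. Lemma~\ref{l2.1}(2) then yields $\lambda_p(l_2) \ge \lambda_p(l_1)$, and the strict inequality follows because equality would force $\tilde\phi$ to be an eigenfunction, contradicting the strict positivity asserted in Proposition~\ref{p2.1}(1). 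Continuity is reduced, via monotonicity, to showing that $\lambda_p(l\pm\delta) \to \lambda_p(l)$ as $\delta\to 0^+$; I would pass to a subsequence of normalized eigenfunctions $\phi^{l_n}$, use the compactness of $\mathcal{P}$ to extract a uniform limit, and verify that this limit is an eigenfunction at the limiting value of $l$, forcing the limit to equal $\lambda_p(l)$ via Proposition~\ref{p2.1}(3).

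For (2), testing with the constant vector $\phi \equiv (\theta_A,1)^T$ and using $(\gamma_A I - A)(\theta_A,1)^T = 0$ together with the displayed inequality gives $\mathcal{L}[\phi] \le \gamma_A \phi$ on $[0,l]$ for every $l>0$, so Lemma~\ref{l2.1}(1) yields $\lambda_p(l) \le \gamma_A$. For the matching lower bound, for $\varepsilon>0$ I would build a cut-off trial function $\phi(x) = (\theta_A,1)^T \eta(x)$ with $\eta$ supported in $(R, l-R)$ and equal to $1$ on $(2R, l-2R)$. Using the tightness $\int_{|z|>R} J_i(z)\,dz \to 0$ as $R\to\infty$, for $x$ in the bulk I can make $\int_0^l J_i(x-y)\eta(y)\,dy$ as close to $j_i(x)\eta(x)$ as I wish; outside the support of $\eta$ the inequality $\mathcal{L}[\phi] \ge (\gamma_A - \varepsilon)\phi$ is automatic because the right-hand side is zero. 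Lemma~\ref{l2.1}(2) then delivers $\lambda_p(l) \ge \gamma_A - \varepsilon$ once $l$ is large, proving (2).

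For (3), I test with $\phi \equiv (\theta_B,1)^T$ associated to the matrix $B$. Because $J_i$ is even of unit mass, $j_i(x) = \tfrac12 + \int_0^x J_i(-y)\,dy$, so $j_i(x)\to \tfrac12$ uniformly on $[0,l]$ as $l\to 0$, while $\int_0^l J_i(x-y)\,dy \le l\,\|J_i\|_\infty \to 0$ uniformly. Combined with $(\gamma_B I - B)(\theta_B,1)^T = 0$, a direct calculation gives $\mathcal{L}[\phi](x) = \gamma_B \phi(x) + o(1)\phi(x)$ uniformly in $x\in[0,l]$; thus for any $\varepsilon>0$ and sufficiently small $l$ both $\mathcal{L}[\phi]\le(\gamma_B+\varepsilon)\phi$ and $\mathcal{L}[\phi]\ge(\gamma_B-\varepsilon)\phi$ hold. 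Applying both parts of Lemma~\ref{l2.1} traps $\lambda_p(l)$ in $[\gamma_B-\varepsilon,\gamma_B+\varepsilon]$, proving (3). The main obstacle is the lower bound in (2): controlling the mismatch between $\int_0^l J_i(x-y)\eta(y)\,dy$ and $j_i(x)\eta(x)$ simultaneously near $0$, near $l$, and in the interior, which is handled by the cut-off together with the tightness estimate.
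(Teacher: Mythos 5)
Your overall strategy coincides with the paper's: both the upper bound in (2) and all of (3) are obtained exactly as in the paper, by testing Lemma \ref{l2.1} with the constant vectors $(\theta_A,1)^T$ and $(\theta_B,1)^T$ and using $\int_0^lJ_i(x-y)\dy-j_i(x)=-\int_l^\yy J_i(x-y)\dy\in[-\tfrac12,0]$. There are, however, two genuine gaps. The more serious one is the lower bound in (2). Your assertion that tightness of $J_i$ alone lets you make $\int_0^lJ_i(x-y)\eta(y)\dy$ as close to $j_i(x)\eta(x)$ as you wish on the support of $\eta$ is not correct as stated: at the shoulder $x=2R$ the ramp contributes $\int_0^RJ_i(z)\dz-\frac1R\int_0^RzJ_i(z)\dz$, and the deficit $\frac1R\int_0^RzJ_i(z)\dz$ is not controlled by $\int_{|z|>R}J_i(z)\dz$; one must separately prove $\frac1R\int_0^RzJ_i(z)\dz\to0$ as $R\to\yy$ (true without any moment assumption, by a Ces\`aro-type argument, but it is an additional fact), and near the foot of the ramp, where $\eta(x)$ is small, the required multiplicative bound $\int_0^lJ_i(x-y)\eta(y)\dy\ge(1-\ep)\eta(x)$ needs a genuine case analysis exploiting the linearity of the ramp — the shape of $\eta$, which you leave unspecified, actually matters. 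This is precisely the estimate the paper does not prove by hand: it uses the tent function $\xi(x)=\min\{1,2(l-x)/l\}$, whose ramp length grows with $l$, and invokes \cite[Lemma 7.3]{DN3} to get $\int_0^lJ_i(x-y)\xi(y)\dy\ge(1-\ep)\xi(x)$ on $[l/4,l]$. You name this as "the main obstacle" but do not actually resolve it, so as written the proof of (2) is incomplete (though your fixed-ramp variant can be pushed through with the extra work indicated).

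The second, more minor, gap is in the monotonicity argument of (1). The zero-extension $\tilde\phi$ of $\phi^{l_1}$ to $[0,l_2]$ is discontinuous at $x=l_1$, because $\phi^{l_1}\in X^{++}$ means $\phi^{l_1}(l_1)>0$; hence $\tilde\phi\notin X^+$ on $[0,l_2]$ and Lemma \ref{l2.1}(2), which requires a continuous test function, cannot be invoked. The standard repair is to argue in the opposite direction: restrict $\phi^{l_2}$ to $[0,l_1]$, note that dropping the integral over $(l_1,l_2]$ gives $\mathcal{L}_{l_1}[\phi^{l_2}]\le\lambda_p(l_2)\phi^{l_2}$ on $[0,l_1]$ with strict inequality at $x=l_1$ (since $J_i(0)>0$), and apply Lemma \ref{l2.1}(1) together with its equality case to get $\lambda_p(l_1)<\lambda_p(l_2)$. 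Your compactness sketch for continuity is sound (the resolvent $(\lambda I-H)^{-1}$ is uniformly bounded because $\lambda_p(l_n)\ge\lambda_p(l_0/2)>\gamma_B=\max_{[0,l]}\beta$), and the paper itself omits this part, deferring to \cite[Proposition 2.3]{DN8}.
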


\begin{proof}
{\rm (1)} This conclusion can be obtained by adopting a similar approach as in \cite[Proposition 2.3]{DN8}, and thus the details are omitted here.

{\rm (2)} Recall that $\gamma_A$ and $\theta_A$ are given by \eqref{2.7}. Define $\bar{\varphi}=(\theta_A,1)^T$. We claim that $\mathcal{L}[\bar\varphi]\le \gamma_A\bar{\varphi}$ for all $l>0$ which, combined with Lemma \ref{l2.1}, yields
  \bes\label{2.8}
  \lambda_p(l)\le\gamma_A\;\;\;\text{for all}\;\,l>0.
 \ees

Now we prove $\mathcal{L}[\bar\varphi]\le \gamma_A\bar{\varphi}$ for all $l>0$. Simple calculations lead to
 \bess
&&d_1\int_0^lJ_1(x-y)\theta_A\dy-d_1j_1(x)\theta_A+a_{11}\theta_A+a_{12}\le a_{11}\theta_A+a_{12}=\gamma_A\theta_A,\\
&&d_2\int_0^lJ_2(x-y)\dy-d_2j_2(x)+a_{21}\theta_A+a_{22}\le a_{21}\theta_A+a_{22}=\gamma_A.
 \eess
Thus our claim holds and \eqref{2.8} is obtained.

Define $\underline{\varphi}=(\underline{\varphi}_1(x),\underline\varphi_2(x))^T$ with $\underline{\varphi}_1(x)=\theta_A\xi(x)$, $\underline{\varphi}_2(x)=\xi(x)$ and $\xi(x)=\min\kk\{1,\, 2(l-x)/l\rr\}$.
We shall show that for any small $\ep>0$ there exists $l_{\ep}>0$ such that when $l>4l_{\ep}$ there holds:
 \bes\label{2.9}
 \mathcal{L}[\underline{\varphi}]\ge (\gamma_A-\max\{d_1,d_2\}\ep)\underline{\varphi} ~ ~{\rm for ~ }x\in[0,l],
\ees
which, by Lemma \ref{l2.1}, arrives at $\lambda_p(l)\ge \gamma_A-\ep$ for $l\ge 4l_{\ep}$. Then by the arbitrariness of $\ep$, we have $
  \liminf_{l\to\yy}\lambda_p(l)\ge\gamma_A$.

Next we prove \eqref{2.9}. We first consider the case $x\in[0,l/4]$.
Direct calculations yield that
\bess
&&d_1\int_0^lJ_1(x-y)\underline{\varphi}_1(y)\dy-d_1j_1(x)\underline{\varphi}_1+a_{11}\underline{\varphi}_1+a_{12}\underline{\varphi}_2\\
&\ge& d_1\theta_A\int_0^{l/2}J_1(x-y)\dy-d_1j_1(x)\theta_A+a_{11}\theta_A+a_{12}\\
&=&-d_1\theta_A\int_{l/2}^{\yy}J_1(x-y)\dy+a_{11}\theta_A+a_{12}\\
&\ge& -d_1\theta_A\ep+a_{11}\theta_A+a_{12}=(\gamma_A-d_1\ep)\theta_A\ge(\gamma_A-d_1\ep)\underline{\varphi}_1,
\eess
provided that $l$ is large enough such that $\int_{l/4}^{\yy}J_1(y)\dy\le \ep$. Similarly,
\[d_2\int_0^lJ_2(x-y)\underline{\varphi}_2(y)\dy-d_2j_2(x)\underline{\varphi}_2
+a_{21}\underline{\varphi}_1+a_{22}\underline{\varphi}_2
\ge(\gamma_A-d_2\ep)\underline{\varphi}_2.\]

Then we consider the case $x\in[l/4,l]$. In view of \cite[Lemma 7.3]{DN3} with $l_2=l$ and $l_1=l/2$, for any small $\ep>0$ there exists a $l_{\ep}>0$ such that for all $l\ge4l_{\ep}$,
\bess
\int_0^lJ_i(x-y)\xi(y)\dy\ge(1-\ep)\xi(x) ~ ~ {\rm for } ~ i=1,2, ~ x\in[l/4,l].
\eess
Using this estimate, we have
 \bess
d_1\int_0^lJ_1(x-y)\underline{\varphi}_1(y)\dy-d_1j_1(x)\underline{\varphi}_1
+a_{11}\underline{\varphi}_1+a_{12}\underline{\varphi}_2
&\ge& d_1(1-\ep)\underline{\varphi}_1-d_1\underline{\varphi}_1+a_{11}\underline{\varphi}_1+a_{12}\underline{\varphi}_2\\
&=&(-d_1\ep+a_{11})\underline{\varphi}_1+a_{12}\underline{\varphi}_2\\
&=&(\gamma_A-d_1\ep)\underline{\varphi}_1.
\eess
Similarly,
\[d_2\int_0^lJ_2(x-y)\underline{\varphi}_2(y)\dy
-d_2j_2(x)\underline{\varphi}_2+a_{21}\underline{\varphi}_1
+a_{22}\underline{\varphi}_2\ge(\gamma_A-d_2\ep)\underline{\varphi}_2.\]

Hence \eqref{2.9} holds and $\liminf_{l\to\yy}\lambda_p(l)\ge\gamma_A$. Then due to \eqref{2.8}, the conclusion (2) is obtained.

{\rm (3)} Recall that $\gamma_B$ and $\theta_B$ are determined in \eqref{2.7}. Let $\underline\psi=(\theta_B,1)^T$. We claim that $\mathcal{L}[\psi]\ge\gamma_B\psi$ for all $l>0$. In fact, it is easy to verify that $\int_0^lJ_i(x-y)\dy-j_i(x)\ge -\frac12$. This, combined with \eqref{2.7}, allows us to derive
\bess
d_1\int_0^lJ_1(x-y)\dy\theta_B-d_1j_1(x)\theta_B-a_{11}\theta_B+a_{12}
\ge -\frac{d_1}2\theta_B+a_{11}\theta_B+a_{12}=\gamma_B\theta_B.
\eess
Similarly,
 \[d_2\int_0^lJ_2(x-y)\dy-d_2j_2(x)+a_{21}\theta_B+a_{22}\ge\gamma_B.\]
Therefore, our claim is valid. It then follows from Lemma \ref{l2.1} that $\lambda_p(l)\ge\gamma_B$ for all $l>0$.

Define $\rho(l)=\max\limits_{i=1,2}\kk\{\frac{d_i}2-d_i\int_{l}^{\yy}J_i(y)\dy\rr\}$.
Clearly, $\rho(l)\to0$ as $l\to0$.
It is not hard to show
  \bess
&&d_1\int_0^lJ_1(x-y)\dy\theta_B-d_1j_1(x)\theta_B+a_{11}\theta_B+a_{12}\\
&=&\frac{-d_1}2\theta_B+a_{11}\theta_B+a_{12}
+\kk(\frac{d_1}2-d_1\int_{l}^{\yy}J_1(y)\dy\rr)\theta_B\le(\gamma_B+\rho(l))\theta_B.
 \eess
Analogously,
 \[d_2\int_0^lJ_2(x-y)\dy-d_2j_2(x)+a_{21}\theta_B+a_{22}\le\gamma_B+\rho(l).\]
Using Lemma \ref{l2.1} again, we have $\lambda_p(l)\le \gamma_B+\rho(l)$, which implies $\limsup_{l\to0}\lambda_p(l)\le \gamma_B$. Together with $\lambda_p(l)\ge\gamma_B$ for all $l>0$, we finish the proof of conclusion (3). The proof is complete.
\end{proof}

Then we investigate the dependence of $\lambda_p$ on diffusion coefficients $d_1$ and $d_2$. So we rewrite $\lambda_p$ as a binary function $\lambda_p(d_1,d_2)$ which, by Proposition \ref{p2.1}, is well defined on $[0,\yy)\times[0,\yy)\setminus\{(0,0)\}$.

\begin{proposition}\label{p2.3} Let $\lambda_p(d_1,d_2)$ be given as above. Then the following statements are valid.\vspace{-1.5mm}
\begin{enumerate}[$(1)$]
  \item $\lambda_p(d_1,d_2)$ is continuous with respect to $(d_1,d_2)\in [0,\yy)\times[0,\yy)\setminus\{(0,0)\}$.\vspace{-1.5mm}
  \item $\lambda_p(d_1,d_2)\to\gamma_A$ as $(d_1,d_2)\to (0,0)$, where $\gamma_A$ is given by \eqref{2.7}.\vspace{-1.5mm}
  \item If $a_{12}=a_{21}$, then $\lambda_p(d_1,d_2)$ is strictly decreasing in each variable $d_1>0$ and $d_2>0$.\vspace{-1.5mm}
  \item Fix $d_i>0$. Then $\lambda_p(d_1,d_2)\to\zeta_j$ as $d_j\to\yy$ where $i,j=1,2$, $i\neq j$ and $\zeta_j$ is the principal eigenvalue of
  \bess
  d_i\int_0^lJ_i(x-y)\omega(y)\dy-d_ij_i(x)\omega+a_{ii}\omega=\zeta\omega, ~ ~ x\in[0,l].\vspace{-1.5mm}\eess
  \item $\lambda_p(d_1,d_2)\to-\yy$ as $(d_1,d_2)\to(\yy,\yy)$.\vspace{-1.5mm}
\end{enumerate}
\end{proposition}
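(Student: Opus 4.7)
My plan is to prove all five parts by producing positive test functions and applying Lemma \ref{l2.1}, mirroring the strategy of Proposition \ref{p2.2}; the variational characterization in Proposition \ref{p2.1}(4) enters only for the monotonicity in (3). For (1), I would establish Lipschitz continuity by perturbation: if $\phi^0\in X^{++}$ is the principal eigenfunction at $(d_1^0,d_2^0)$ with eigenvalue $\lambda_p^0$, then $\mathcal{L}_{d_1,d_2}[\phi^0]-\lambda_p^0\phi^0$ splits as $\sum_{i=1}^2(d_i-d_i^0)(D_i\phi_i^0)\mathbf{e}_i$ with $D_i\phi_i^0$ bounded, and since each $\phi_i^0$ is bounded below by a positive constant, Lemma \ref{l2.1} yields $|\lambda_p(d_1,d_2)-\lambda_p^0|\le C(|d_1-d_1^0|+|d_2-d_2^0|)$. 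For (2), write $\bar\varphi=(\theta_A,1)^T$. The bound $\mathcal{L}[\bar\varphi]\le\gamma_A\bar\varphi$ from the proof of Proposition \ref{p2.2}(2) is valid for every $(d_1,d_2)$, giving $\lambda_p\le\gamma_A$; the reverse comes from $\mathcal{L}[\bar\varphi]_i = \gamma_A\bar\varphi_i - d_i\bar\varphi_i\int_l^\infty J_i(x-y)\,dy \ge (\gamma_A-\max\{d_1,d_2\})\bar\varphi_i$, so Lemma \ref{l2.1}(2) and the squeeze yield $\lambda_p\to\gamma_A$ as $(d_1,d_2)\to(0,0)$.

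For (3), with $a_{12}=a_{21}$, I would invoke $\lambda_p=\sup_{\|\phi\|_2=1}\langle\mathcal{L}[\phi],\phi\rangle$. An application of $2ab\le a^2+b^2$ combined with the symmetry of $J_i$ gives $\langle D_i\phi_i,\phi_i\rangle \le -\int_0^l\phi_i^2(x)\int_l^\infty J_i(x-y)\,dy\,dx < 0$ for any positive $\phi_i$; testing the functional at the strictly positive principal eigenfunction of $(d_1',d_2)$ with $d_1'>d_1$ gives $\lambda_p(d_1,d_2)>\lambda_p(d_1',d_2)$. For (5), let $\psi_i>0$ be the principal eigenfunction of the scalar operator $D_i$ alone (existence by \cite[Lemma 2.6]{LLW}, with eigenvalue $-\mu_i<0$ by the same quadratic-form computation). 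Testing $\phi=(\psi_1,\psi_2)^T$ yields $\mathcal{L}[\phi]_i/\psi_i \le -d_i\mu_i + a_{ii} + a_{ij}\max_{x\in[0,l]}(\psi_j/\psi_i) \to -\infty$, and Lemma \ref{l2.1}(1) delivers $\lambda_p\to-\infty$.

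For (4), fix $d_1>0$ and let $d_2\to\infty$. The lower bound exploits the crucial flexibility of Lemma \ref{l2.1}(2) that $\phi$ may lie in $X^+\setminus\{(0,0)\}$: with $\phi=(\omega,0)^T$ where $\omega>0$ is the scalar principal eigenfunction for $\zeta_2$, one checks $\mathcal{L}[\phi]=(\zeta_2\omega,a_{21}\omega)^T \ge \zeta_2(\omega,0)^T$, giving $\lambda_p\ge\zeta_2$ uniformly in $d_2$. For the upper bound I would use $\phi=(\omega,\delta\psi_2)^T$ with $\psi_2$ as in (5): component~1 demands $\lambda\ge\zeta_2+a_{12}\delta\max_x(\psi_2/\omega)$ and component~2 demands $\lambda\ge -d_2\mu_2+a_{22}+a_{21}\max_x(\omega/\psi_2)/\delta$; the latter is driven to $-\infty$ by large $d_2$, so Lemma \ref{l2.1}(1) gives $\lambda_p\le \zeta_2+a_{12}\delta\max_x(\psi_2/\omega)$, and $\delta\to0$ yields $\limsup\lambda_p\le\zeta_2$.

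The main obstacle is constructing this upper-bound test function in (4). The naive choice $(\omega,\delta)$ with $\delta$ constant breaks whenever $\int_l^\infty J_2(x-y)\,dy$ degenerates at $x=0$ (which can occur for compactly supported $J_2$): the diffusion-induced loss in the second component then fails to dominate the coupling term $a_{21}\omega/\delta$ at the left endpoint. Replacing the constant $\delta$ with $\delta\psi_2$, where $\psi_2$ is the scalar diffusion eigenfunction, forces uniform decay $D_2\psi_2/\psi_2\equiv-\mu_2<0$ throughout $[0,l]$, and this is the key device that makes the argument go through for all admissible kernels.
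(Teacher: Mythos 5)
Your proposal is correct. Parts (1), (2), (3) and (5) follow essentially the paper's own route: the two-sided perturbation bound $|\lambda_p(\bar d_1,\bar d_2)-\lambda_p(d_1,d_2)|\le 2K(|\bar d_1-d_1|+|\bar d_2-d_2|)$ with $K=\max_i(\max\phi_i/\min\phi_i)$ for (1), the constant vector $(\theta_A,1)^T$ used as both super- and subsolution for (2), the strict negativity of the diffusion part of the quadratic form inserted into the variational characterization for (3), and the pair of scalar principal eigenfunctions for (5) (your explicit constants play the role of the paper's $k$). The lower bound in (4) via $(\omega,0)^T$, exploiting that Lemma \ref{l2.1}(2) tolerates a vanishing component, is also exactly the paper's device. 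Where you genuinely depart is the upper bound in (4): the paper normalizes the eigenfunctions along a sequence $d_1^n\to\yy$, extracts a weak $L^2$ limit, divides the first equation by $d_1^n$ to force the first component to vanish in the limit (using that the pure scalar diffusion eigenvalue is negative), and then upgrades the second component to uniform convergence to identify the limiting eigenvalue. You instead exhibit the explicit supersolution $(\omega,\delta\psi_2)^T$ and send $d_2\to\yy$ before $\delta\to0$; your observation that the second slot must be a multiple of the scalar diffusion eigenfunction $\psi_2$ rather than a constant --- so that the decay rate $-d_2\mu_2$ is uniform on $[0,l]$, including points where $\int_l^{\yy}J_2(x-y)\dy$ vanishes for compactly supported kernels --- is precisely what makes the comparison close. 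Your route is shorter and entirely elementary (no compactness or weak convergence), while the paper's argument additionally yields convergence of the normalized eigenfunctions to the scalar principal eigenfunction, information your supersolution does not provide but which the proposition does not require.
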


\begin{proof}
{\rm (1)} For any given $(\bar{d}_1,\bar{d}_2)$ and $(d_1,d_2)\in [0,\yy)\times[0,\yy)\setminus\{(0,0)\}$. Denote by $(\phi_1,\phi_2)^T$ the positive eigenfunction of $\lambda_p(d_1,d_2)$, and set $K=\max\limits_{i=1,2}\frac{\max_{[0,l]}\phi_i}{\min_{[0,l]}\phi_i}$. Direct computations yield
\bess
&&\bar{d}_1\int_0^lJ_1(x-y)\phi_1(y)\dy-\bar{d}_1j_1(x)\phi_1
+a_{11}\phi_1+a_{12}\phi_2\\
&=&\lambda_p(d_1,d_2)\phi_1+(\bar{d}_1-d_1)\int_0^lJ_1(x-y)\phi_1(y)\dy-(\bar d_1-d_1)j_1(x)\phi_1\\
&\le& \lambda_p(d_1,d_2)\phi_1+2|\bar{d}_1
-d_1|K\phi_1.
\eess
Similarly,
\bess
\bar{d}_2\int_0^lJ_2(x-y)\phi_2(y)\dy-\bar{d}_2j_2(x)\phi_2
+a_{21}\phi_1+a_{22}\phi_2
\le\lambda_p(d_1,d_2)\phi_2+2|\bar{d}_2
-d_2|K\phi_2.\eess
Thus, it follows from Lemma \ref{l2.1} that
\bes\label{2.12}
\lambda_p(\bar{d}_1,\bar{d}_2)\le \lambda_p(d_1,d_2)+2K(|\bar{d}_1-d_1|+|\bar{d}_2-d_2|),
\ees
Similar to the above, we have
 \[\lambda_p(\bar{d}_1,\bar{d}_2)\ge \lambda_p(d_1,d_2)-2K(|\bar{d}_1-d_1|+|\bar{d}_2-d_2|),\]
which, together with \eqref{2.12}, derives
\[|\lambda_p(\bar{d}_1,\bar{d}_2)-\lambda_p(d_1,d_2)|\le2K(|\bar{d}_1-d_1|+|\bar{d}_2-d_2|).\]
The continuity follows.

{\rm (2)} Let $\bar{\varphi}=(\theta_A,1)^T$ as in the proof of Proposition \ref{p2.2}. Direct computations show
 \bess
 d_1\int_0^lJ_1(x-y)\theta_A\dy-d_1j_1(x)\theta_A+a_{11}\theta_A+a_{12}&\ge& -d_1\theta_A+a_{11}\theta_A+a_{12}=(\gamma_A-d_1)\theta_A,\\
 d_2\int_0^lJ_2(x-y)\dy-d_2j_2(x)+a_{21}\theta_A+a_{22}&\ge & -d_2+a_{21}\theta_A+a_{22}=\gamma_A-d_2.
 \eess
Recalling Lemma \ref{l2.1}, we have $\lambda_p(d_1,d_2)\ge\gamma_A-(d_1+d_2)$, so  $\liminf_{(d_1,d_2)\to(0,0)}\lambda_p(d_1,d_2)\ge \gamma_A$. Moreover, owing to \eqref{2.8}, $\limsup_{(d_1,d_2)\to(0,0)}\lambda_p(d_1,d_2)\le \gamma_A$. Conclusion (2) is proved.

{\rm (3)} Note that $a_{12}=a_{21}$ in this statement. So by Proposition \ref{p2.1}, the variational characteristic holds. We only show the monotonicity of $\lambda_p(d_1,d_2)$ about $d_1$ since the other case is similar. We fix $d_2$ and choose any $0<\bar{d}_1<d_1$. Denote by $\phi=(\phi_1,\phi_2)^T$ the corresponding positive eigenfunction of $\lambda_p(d_1,d_2)$ with $\|\phi\|_2=1$. Firstly, using \cite[Lemma 2.6]{LLW}, we have
 \bess
\int_0^l\int_0^lJ_1(x-y)\phi_1(y)\phi_1(x)\dy\dx-\int_0^lj_1\phi^2_1\dx<0 ~ ~ {\rm for ~ all ~ }l>0.
 \eess
It then follows that
\bess
\lambda_p(d_1,d_2)&=&d_1\kk(\int_0^l\!\int_0^l\!J_1(x-y)\phi_1(y)\phi_1(x)\dy\dx
-\int_0^l\!j_1\phi^2_1\dx\rr)+\int_0^l\!(a_{11}\phi^2_1+a_{12}\phi_1\phi_2)\dx\\[1mm]
&&+d_2\!\int_0^l\!\int_0^l\!J_2(x-y)\phi_2(y)\phi_2(x)\dy\dx
-d_2\!\int_0^l\!j_2\phi^2_2\dx+\int_0^l\!(a_{21}\phi_1\phi_2+a_{22}\phi^2_2)\dx\\[1mm]
&<&\bar d_1\kk(\int_0^l\!\int_0^l\!J_1(x-y)\phi_1(y)\phi_1(x)\dy\dx
-\int_0^l\!j_1\phi^2_1\dx\rr)
+\int_0^l\!(a_{11}\phi^2_1+a_{12}\phi_1\phi_2)\dx\\[1mm]
&&+d_2\!\int_0^l\!\int_0^l\!J_2(x-y)\phi_2(y)\phi_2(x)\dy\dx
-d_2\!\int_0^l\!j_2\phi^2_2\dx+\int_0^l\!(a_{21}\phi_1\phi_2+a_{22}\phi^2_2)\dx\\[1mm]
&\le& \lambda_p(\bar{d}_1,d_2).
\eess
 The monotonicity is obtained.

(4) We only prove $\lambda_p(d_1,d_2)\to\zeta_1$ as $d_1\to\yy$ for the fix $d_2>0$, since the other case is parallel. Our arguments are inspired by \cite{Zhanglei}. Firstly, it follows from  \eqref{2.8} that  $\lambda_p(d_1,d_2)\le\gamma_A$. Let $\omega$ be the corresponding positive eigenfunction of $\zeta_1$ and $\underline{\varphi}=(0,\omega)^T$. It is easy to see that $\mathcal{L}[\underline{\varphi}]\ge\zeta_1\underline{\varphi}$, which implies $\lambda_p(d_1,d_2)\ge\zeta_1$. Consequently,
  \bes
  \zeta_1\le\lambda_p(d_1,d_2)\le\gamma_A\;\;\text{for all}\; d_1, d_2>0.
  \lbl{2.15a}\ees

In order to show $\lambda_p(d_1,d_2)\to\zeta_1$ as $d_1\to\yy$, it is sufficient to prove that for any sequence $\{d^n_1\}$ with $d^n_1\to\yy$ as $n\to\yy$, there is a subsequence, still denoted by itself, such that  $\lambda_p(d^n_1,d_2)\to\zeta_1$ as $n\to\yy$. For convenience, denote $\lambda_p(d^n_1,d_2)$ by $\lambda^n_p$ since we fix $d_2>0$. Let $\phi^n=(\phi^n_1,\phi^n_2)^T$ be the positive eigenfunction of $\lambda^n_p$ with $\|\phi^n\|_{X}=1$. Using this fact and \qq{2.15a} we deduce  that there exists a subsequence of $\{n\}$, still denoted by itself, such that $(\phi^n_1,\phi^n_2)$ converges weakly to $(\psi_1,\psi_2)$ with $\psi_i\in L^2([0,l])$, and $\lambda^n_p\to\lambda_{\yy}\ge\zeta_1$ as $n\to\yy$. Due to $\phi^n\in X^{++}$, we have $\psi_i\ge0$ for $i=1,2$.

Now we show that $\psi_1\equiv0$. Obviously,
\[d^n_1\int_0^lJ_1(x-y)\phi^n_1(y)\dy-d^n_1j_1(x)\phi^n_1+a_{11}\phi^n_1+a_{12}\phi^n_2=\lambda^n_p\phi^n_1, ~ ~ {\rm for ~ }x\in[0,l].\]
Dividing the above equation by $d^n_1$ and letting $n\to\yy$ one has
 \bes
 \int_0^lJ_1(x-y)\phi^n_1(y)\dy-j_1(x)\phi^n_1\to0 ~ ~ {\rm uniformly ~ in ~ }[0,l].\lbl{2.13}\ees
Since $\phi^n_1$ converges weakly to $\psi_1$ and operator $\int_0^{l}J_1(x-y)\phi^n_1(y)\dy: L^2([0,l])\to C([0,l])$ is compact, it follows that, as $n\to\yy$,
  \[\int_0^{l}J_1(x-y)\phi^n_1(y)\dy\to\int_0^{l}J_1(x-y)\psi_1(y)\dy ~ ~ {\rm uniformly ~ in ~ }[0,l].\]
This, combined with \eqref{2.13}, yields that, as $n\to\yy$,
  \[\phi^n_1\to \frac1{j_1(x)}\int_0^{l}J_1(x-y)\psi_1(y)\dy
  ~ ~ {\rm uniformly ~ in ~ }[0,l].\]
 By the uniqueness of weak limit,
 \[\psi_1(x)=\frac1{j_1(x)}\int_0^{l}J_1(x-y)\psi_1(y)\dy.\]
 If there exists some $x_0\in[0,l]$ such that $\psi_1(x_0)>0$, then it is not hard to show that $\psi_1(x)>0$ in $[0,l]$, which implies that $(0,\psi_1)$ is the principal eigenpair of the eigenvalue problem
 \bes
 \int_0^{l}J_1(x-y)\omega(y)\dy-j_1(x)\omega(x)=\xi\omega.
 \lbl{z.1}\ees
However, on the basis of \cite[Lemma 2.6]{LLW}, the principal eigenvalue $\xi$ of \eqref{z.1}  must be less than $0$. This contradiction implies $\psi_1\equiv0$. Thus $\phi^n_1\to0$ in $C([0,l])$ as $n\to\yy$.

Noticing that $\|\phi^n\|_{X}=1$, we have $\|\phi^n_2\|\to1$ as $n\to\yy$. Since $\phi^n_2\to\psi_2$ weakly in $L^2([0,l])$ and $\int_0^lJ_2(x-y)\phi^n_2(y)\dy: L^2([0,l])\to C([0,l])$ is compact, one has
\bes\label{2.15}\int_0^lJ_2(x-y)\phi^n_2(y)\dy\to\int_0^lJ_2(x-y)\psi_2(y)\dy ~ ~ {\rm uniformly ~ in ~ }[0,l].
\ees
Moreover, due to $\phi^n_1\to0$ in $C([0,l])$ as $n\to\yy$, one also has
\[d_2\int_0^lJ_2(x-y)\phi^n_2(y)\dy-d_2j_2(x)\phi^n_2
+a_{22}\phi^n_2-\lambda^n_p\phi^n_2=-a_{21}\phi^n_1\to0 ~ ~ {\rm uniformly ~ in}~ [0,l].\]
Since
 \bess
d_2j_2(x)-a_{22}+\lambda^n_p\ge\frac{d_2}2-a_{22}+\zeta_1>0,
 \eess
we have that, as $n\to\yy$,
 \bess\phi^n_2(x)-\frac{\dd d_2\int_0^lJ_2(x-y)\phi^n_2(y)\dy}{d_2j_2(x)-a_{22}+\lambda^n_p}\to0 ~ ~ {\rm uniformly ~ in}~ [0,l].\eess
This, combines with \eqref{2.15}, yields that, as $n\to\yy$,
\[\phi^n_2(x)\to\frac{\dd d_2\int_0^lJ_2(x-y)\psi_2(y)\dy}{d_2j_2(x)-a_{22}+\lambda_{\yy}} ~ ~ {\rm uniformly~ in ~ }[0,l].\]
Note that $\phi^n_2$ converges weakly to $\psi_2$. By the uniqueness of limit, we obtain
\bes\label{2.16}d_2\int_0^lJ_2(x-y)\psi_2(y)\dy-d_2j_2(x)\psi_2
+a_{22}\psi_2=\lambda_{\yy}\psi_2,\ees
and $\phi^n_2\to\psi_2$ in $C([0,l])$. Recall that $\|\phi^n_2\|_{C([0,l])}\to1$ as $n\to\yy$. So $\|\psi_2\|_{C([0,l])}=1$. Together with \eqref{2.16}, we easily derive that $\psi_2>0$ in $[0,l]$, which implies $\lambda_{\yy}=\zeta_1$. Thus conclusion (4) is proved.

(5) It can be seen from \cite[Lemma 2.6]{LLW} that, for $i=1,2$, the following eigenvalue problem
 \bess\int_0^lJ_i(x-y)\omega(y)\dy-j_i(x)\omega(x)=\lambda\omega(x), ~ ~ x\in[0,l]\eess
has a principal eigenpair $(\lambda_i,\omega_i)$ with $\omega_i$ positive and satisfying $\|\omega_i\|_{C([0,\,l])}=1$. Moreover, $\lambda_i\in(-1/2,0)$. Define
\bess
d=\min\{d_1,d_2\},~ \lambda=\min\{\lambda_1,\lambda_2\}, ~ \omega=(\omega_1,\omega_2)^T, ~ k=|a_{11}|+|a_{22}|+\dd\frac{a_{12}}{\min_{[0,\,l]}\omega_1}
+\frac{a_{21}}{\min_{[0,\,l]}\omega_2}.
 \eess
Simple computations yield
\bess
&&d_1\int_0^lJ_1(x-y)\omega_1(y)\dy-d_1j_1(x)\omega_1+a_{11}\omega_1+a_{12}\omega_2\\
&\le&\kk(d_1\lambda_1+|a_{11}|+\frac{a_{12}}{\min_{[0,\,l]}\omega_1}\rr)\omega_1\le(d_1\lambda_1+k)\omega_1.
\eess
Similarly,
\[d_2\int_0^lJ_2(x-y)\omega_2(y)\dy-d_2j_2(x)\omega_2+a_{21}\omega_1+a_{22}\omega_2\le(d_2\lambda_2+k)\omega_2.\]
Therefore, $\mathcal{L}[\omega]\le( d\lambda+k)\omega$. By Lemma \ref{l2.1}, $\lambda_p(d_1,d_2)\le d\lambda+k$. From the fact that $\lambda<0$ and $k$ is independent of $(d_1,d_2)$, we obtain conclusion (5).
The proof of Proposition \ref{p2.3} is complete.
\end{proof}

\section{Positive equilibrium solutions associated to \eqref{1.10}}\lbl{s3}

With the help of the results obtained in Section \ref{s2}, in this section, we discuss the positive equilibrium solutions associated to \eqref{1.10} which reads as
 \bes\left\{\begin{aligned}\label{2.17}
&d_1\int_0^lJ_1(x-y)\boldsymbol{u}(y)\dy-d_1j_1(x)\boldsymbol{u}-a\boldsymbol{u}+H(\boldsymbol{v})=0, & & x\in[0,l],\\
&d_2\int_0^lJ_2(x-y)\boldsymbol{v}(y)\dy-d_2j_2(x)\boldsymbol{v}-b\boldsymbol{v}+G(\boldsymbol{u})=0, & & x\in[0,l],
 \end{aligned}\right.
 \ees
where all parameters are positive, and condition {\bf(H1)} holds. In the remainder of this paper, let $\lambda_1(l)$ and $\lambda_2(l)$ be the principal eigenvalue of the following two eigenvalue problems, respectively,
  \bes\label{2.18}\left\{\begin{aligned}
&d_1\int_0^lJ_1(x-y)\phi_1(y)\dy-d_1j_1(x)\phi_1-a\phi_1
+H'(0)\phi_2=\lambda\phi_1, & &x\in[0,l],\\[1mm]
&d_2\int_0^lJ_2(x-y)\phi_2(y)\dy-d_2j_2(x)\phi_2
+G'(0)\phi_1-b\phi_2=\lambda\phi_2, & &x\in[0,l].
 \end{aligned}\right.
 \ees
  \bes\left\{\begin{aligned}
&\frac1{H'(0)}\left(d_1\int_0^lJ_1(x-y)\phi_1(y)\dy-d_1j_1(x)\phi_1
-a\phi_1\right)+\phi_2=\lambda\phi_1, & &x\in[0,l],\\[1mm]
&\frac1{G'(0)}\left(d_2\int_0^lJ_2(x-y)\phi_2(y)\dy-d_2j_2(x)\phi_2
-b\phi_2\right)+\phi_1=\lambda\phi_2,& &x\in[0,l].
 \end{aligned}\right.\lbl{2.19}
 \ees
It is easy to see that these two eigenvalue problems are not equivalent, and
 \bes\label{2.20}\left\{\begin{aligned}
& \frac{\lambda_1(l)}{\max\{H'(0),G'(0)\}}\le\lambda_2(l)\le
 \frac{\lambda_1(l)}{\min\{H'(0),G'(0)\}}, & &{\rm if ~ }\lambda
 _1\ge0,\\[1mm]
& \frac{\lambda_1(l)}{\min\{H'(0),G'(0)\}}\le\lambda_2(l)\le
 \frac{\lambda_1(l)}{\max\{H'(0),G'(0)\}},& &{\rm if ~ }\lambda
 _1<0,
 \end{aligned}\right.
 \ees
which clearly implies that $\lambda_1(l)$ and $\lambda_2(l)$ have the same sign.  For clarity, in this paper, we usually use $\lambda_1(l)$ to study dynamics of \eqref{1.10}, and only utilize $\lambda_2(l)$ when discussing the effect of diffusion coefficients $d_1$ and $d_2$ since, by Proposition \ref{p2.3}, the monotonicity of $\lambda_2(l)$ holds.

Below is a maximum principle for \eqref{2.17} that will be used in the coming analysis.

 \begin{lemma}\label{l2.3}Let $(\boldsymbol{u}_i,\boldsymbol{v}_i)\in X^{++}$ for $i=1,2$ and satisfy
 \bes
 \label{2.21}\left\{\begin{aligned}
&d_1\int_0^lJ_1(x-y)\boldsymbol{u}_1(y)\dy-d_1j_1(x)\boldsymbol{u}_1-a\boldsymbol{u}_1+H(\boldsymbol{v}_1)\le0, & & x\in[0,l]\\
&d_2\int_0^lJ_2(x-y)\boldsymbol{v}_1(y)\dy-d_2j_2(x)\boldsymbol{v}_1-b\boldsymbol{v}_1+G(\boldsymbol{u}_1)\le0, & & x\in[0,l],
 \end{aligned}\right.
 \ees
 and
  \bes\label{2.22}\left\{\begin{aligned}
&d_1\int_0^lJ_1(x-y)\boldsymbol{u}_2(y)\dy-d_1j_1(x)\boldsymbol{u}_2-a\boldsymbol{u}_2+H(\boldsymbol{v}_2)\ge0, & & x\in[0,l]\\
&d_2\int_0^lJ_2(x-y)\boldsymbol{v}_2(y)\dy-d_2j_2(x)\boldsymbol{v}_2-b\boldsymbol{v}_2+G(\boldsymbol{u}_2)\ge0, & & x\in[0,l],
 \end{aligned}\right.
 \ees
respectively. Then $(\boldsymbol{u}_1-\boldsymbol{u}_2,\boldsymbol{v}_1-\boldsymbol{v}_2)\in X^{+}$. Moreover, if one of the above four inequalities is strict at some point $x_0\in[0,l]$, then $(\boldsymbol{u}_1-\boldsymbol{u}_2,\boldsymbol{v}_1-\boldsymbol{v}_2)\in X^{++}$.
 \end{lemma}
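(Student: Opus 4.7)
The plan is to prove $(\boldsymbol{u}_1-\boldsymbol{u}_2,\boldsymbol{v}_1-\boldsymbol{v}_2)\in X^+$ via a scaling/sliding argument that exploits the (strict) subadditivity encoded in \textbf{(H1)}, and then to upgrade to strict positivity by an elementary nonlocal strong maximum principle relying on the continuity of the differences, the assumption $J_i(0)>0$, and the connectedness of $[0,l]$. No principal eigenvalue theory is needed.

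For the nonnegativity, I will introduce
\[\tau^* := \inf\{\tau>0 : \tau\boldsymbol{u}_1\ge\boldsymbol{u}_2 \text{ and } \tau\boldsymbol{v}_1\ge\boldsymbol{v}_2 \text{ on } [0,l]\},\]
which is finite because all four functions are continuous and strictly positive on the compact interval $[0,l]$. The target is $\tau^*\le 1$. Arguing by contradiction, suppose $\tau^*>1$; then \textbf{(H1)} yields the bounds $H(\tau^* z)\le\tau^* H(z)$ and $G(\tau^* z)<\tau^* G(z)$ for $z>0$. Multiplying the first line of \eqref{2.21} by $\tau^*$ and subtracting the first line of \eqref{2.22}, then using $\boldsymbol{v}_2\le\tau^*\boldsymbol{v}_1$ and monotonicity of $H$, I obtain for $\tilde W:=\tau^*\boldsymbol{u}_1-\boldsymbol{u}_2\ge 0$
\[d_1\int_0^l J_1(x-y)\tilde W(y)\dy - d_1 j_1(x)\tilde W - a\tilde W \;\le\; H(\boldsymbol{v}_2)-\tau^* H(\boldsymbol{v}_1)\;\le\; 0.\]
The parallel manipulation, invoking the \emph{strict} subadditivity of $G$, shows that $\tilde Z:=\tau^*\boldsymbol{v}_1-\boldsymbol{v}_2\ge 0$ satisfies the strict bound
\[d_2\int_0^l J_2(x-y)\tilde Z(y)\dy - d_2 j_2(x)\tilde Z - b\tilde Z \;<\; 0 \text{ on }[0,l].\]
By the infimum property, at least one of $\tilde W,\tilde Z$ vanishes at some $x_0\in[0,l]$. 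The case $\tilde Z(x_0)=0$ is ruled out immediately: the strict bound at $x_0$ would give $d_2\int J_2(x_0-y)\tilde Z(y)\dy<0$, contradicting $\tilde Z,J_2\ge 0$. So $\tilde W(x_0)=0$; then $\tilde W\ge 0$ together with $\int J_1(x_0-y)\tilde W(y)\dy\le 0$, continuity, $J_1(0)>0$, and connectedness of $[0,l]$ spread this to $\tilde W\equiv 0$ on $[0,l]$, so $\boldsymbol{u}_2=\tau^*\boldsymbol{u}_1$. Plugging this equality back into the two $u$-inequalities chains to $\tau^* H(\boldsymbol{v}_1)\le H(\boldsymbol{v}_2)\le H(\tau^*\boldsymbol{v}_1)\le\tau^* H(\boldsymbol{v}_1)$, so $H(\boldsymbol{v}_2)=H(\tau^*\boldsymbol{v}_1)$, whence $\boldsymbol{v}_2\equiv\tau^*\boldsymbol{v}_1$ by strict monotonicity of $H$; this forces $\tilde Z\equiv 0$, contradicting the strict inequality above. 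Hence $\tau^*\le 1$.

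For the strict conclusion, with $W:=\boldsymbol{u}_1-\boldsymbol{u}_2\in X^+$ and $Z:=\boldsymbol{v}_1-\boldsymbol{v}_2\in X^+$, subtracting \eqref{2.22} from \eqref{2.21} gives
\[d_1\int_0^l J_1(x-y)W(y)\dy-d_1 j_1(x)W-aW\le H(\boldsymbol{v}_2)-H(\boldsymbol{v}_1)\le 0,\]
together with the parallel estimate for $Z$. If $W(x_0)=0$ at some $x_0$, the displayed inequality at $x_0$ reduces to $\int J_1(x_0-y)W(y)\dy\le 0$, which forces $W\equiv 0$ in a neighborhood of $x_0$ and hence on $[0,l]$ by connectedness; evaluating the inequality at $x_0$ further yields $H(\boldsymbol{v}_1(x_0))=H(\boldsymbol{v}_2(x_0))$, i.e.\ $Z(x_0)=0$, and the same spreading argument gives $Z\equiv 0$ on $[0,l]$. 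Then $(\boldsymbol{u}_1,\boldsymbol{v}_1)\equiv(\boldsymbol{u}_2,\boldsymbol{v}_2)$, which makes every one of the four inequalities in \eqref{2.21}--\eqref{2.22} an equality pointwise, contradicting the hypothesis that one of them is strict at some $x_0$. The case $Z(x_0)=0$ is symmetric, so $W,Z>0$ throughout $[0,l]$.

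The main obstacle is closing the contradiction at the end of Step 1 once $\tilde W\equiv 0$: the scaling reduction alone only produces an equality chain for $H$, and the needed contradiction must be extracted from the \emph{strict} decrease of $G(z)/z$ in \textbf{(H1)}. It is precisely this asymmetry between $H$ and $G$ in the hypothesis that allows the sliding argument to close, and keeping track of where strictness is used (versus merely weak monotonicity) is the one place the argument has to be handled with care.
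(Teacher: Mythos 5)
Your proof is correct and follows essentially the same route as the paper's: a sliding argument on the critical scaling factor $\tau^*$, closed by the sublinearity of $H$ and the strict sublinearity of $G$ from {\bf(H1)}, followed by a nonlocal strong-maximum-principle propagation (zero set open and closed, $J_i(0)>0$, connectedness) for the strict part. The only difference is organizational: the paper closes the nonnegativity step by a pointwise two-case analysis at the touching point, whereas you rule out a $v$-touching globally via the strict differential inequality and handle a $u$-touching by propagating $\tilde W\equiv0$; both hinge on exactly the same structural facts.
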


\begin{proof} {\it Step 1}: {\it The proof of $(\boldsymbol{u}_1-\boldsymbol{u}_2,\boldsymbol{v}_1-\boldsymbol{v}_2)\in X^{+}$}.  Since $(\boldsymbol{u}_i,\boldsymbol{v}_i)\in X^{++}$ for $i=1,2$, then
 \[\ud\kappa =\inf\{\kappa>1: (\kappa \boldsymbol{u}_1-\boldsymbol{u}_2,\kappa \boldsymbol{v}_1-\boldsymbol{v}_2)\in X^+\}\]
is well defined and $\ud\kappa \ge1$. Clearly, $(\ud\kappa  \boldsymbol{u}_1-\boldsymbol{u}_2,\ud\kappa  \boldsymbol{v}_1-\boldsymbol{v}_2)\in X^+$. If $\ud\kappa >1$, then there exists a point $x_1\in[0,l]$ such that $\ud\kappa \boldsymbol{u}_1(x_1)=\boldsymbol{u}_2(x_1)$ or $\ud\kappa \boldsymbol{v}_1(x_1)=\boldsymbol{v}_2(x_1)$. We first prove that $\ud\kappa \boldsymbol{u}_1(x_1)=\boldsymbol{u}_2(x_1)$ is impossible. Assume on the contrary that $\ud\kappa \boldsymbol{u}_1(x_1)=\boldsymbol{u}_2(x_1)$.

{\it Case 1}: $\ud\kappa \boldsymbol{v}_1(x_1)>\boldsymbol{v}_2(x_1)$. In view of the first inequalities of \eqref{2.21} and \eqref{2.22}, we have
 \bess
 &&\ud\kappa d_1\int_0^lJ_1(x_1-y)\boldsymbol{u}_1(y)\dy-d_1j_1(x_1)\ud\kappa \boldsymbol{u}_1(x_1)
 -a\ud\kappa \boldsymbol{u}_1(x_1)+\ud\kappa H(\boldsymbol{v}_1(x_1))\le0,\\
 &&d_1\int_0^lJ_1(x_1-y)\boldsymbol{u}_2(y)\dy-d_1j_1(x_1)\ud\kappa \boldsymbol{u}_1(x_1)
 -a\ud\kappa \boldsymbol{u}_1(x_1)+H(\boldsymbol{v}_2(x_1))\ge0,
 \eess
which, together with $\ud\kappa  \boldsymbol{u}_1(x)\ge \boldsymbol{u}_2(x)$ in $[0,l]$, implies $H(\boldsymbol{v}_2(x_1))\ge\ud\kappa H(\boldsymbol{v}_1(x_1))$.
However, thanks to the assumption on $H$, $\ud\kappa >1$ and $\ud\kappa \boldsymbol{v}_1(x_1)>\boldsymbol{v}_2(x_1)$, we easily obtain $H(\boldsymbol{v}_2(x_1))<\ud\kappa H(\boldsymbol{v}_1(x_1))$. This is a contradiction.

{\it Case 2}: $\ud\kappa \boldsymbol{v}_1(x_1)=\boldsymbol{v}_2(x_1)$. Similar to the above, it can be derived that $G(\boldsymbol{u}_2(x_1))\ge\ud\kappa G(\boldsymbol{u}_1(x_1))$. Due to the assumption on $G$, $\ud\kappa>1$ and $\ud\kappa \boldsymbol{u}_1(x_1)=\boldsymbol{u}_2(x_1)$, we also can derive a contradiction.

Similarly, $\ud\kappa \boldsymbol{v}_1(x_1)=\boldsymbol{v}_2(x_1)$ is impossible. Hence,
$\ud\kappa =1$ and thus $(\boldsymbol{u}_1-\boldsymbol{u}_2,\boldsymbol{v}_1-\boldsymbol{v}_2)\in X^{+}$.

{\it Step 2}: {\it Proof of $(\boldsymbol{u}_1-\boldsymbol{u}_2,\boldsymbol{v}_1-\boldsymbol{v}_2)\in X^{++}$}. We only handle the case where the first inequality in \eqref{2.21} is strict at $x_0\in [0,l]$ since other cases can be done by the similar way. Argue indirectly that $(\boldsymbol{u}_1-\boldsymbol{u}_2,\boldsymbol{v}_1-\boldsymbol{v}_2)\notin X^{++}$. Then there is a point $x_2\in[0,l]$ such that $\boldsymbol{u}_1(x_2)=\boldsymbol{u}_2(x_2)$ or $\boldsymbol{v}_1(x_2)=\boldsymbol{v}_2(x_2)$. Define
 \[\Sigma=\{x\in[0,l]:\,\boldsymbol{u}_1(x)=\boldsymbol{u}_2(x)\},\;\;\;\Pi=\{x\in[0,l]:\,\boldsymbol{v}_1(x)=\boldsymbol{v}_2(x)\}.\]
Then at least one of $\Sigma$ and $\Pi$ is nonempty. We first consider the case that $\Sigma\not=\emptyset$.

If $x_0\in\Sigma$, i.e., $\boldsymbol{u}_1(x_0)=\boldsymbol{u}_2(x_0)$. As above, it can be deduced by the first inequalities of \eqref{2.21} and \eqref{2.22} that $H(\boldsymbol{v}_2(x_0))>H(\boldsymbol{v}_1(x_0))$, which clearly contradicts the monotonicity of $H$ and the fact $\boldsymbol{v}_2(x_0)\le \boldsymbol{v}_1(x_0)$.

If $x_0\not\in\Sigma$, then $\boldsymbol{u}_1(x_0)>\boldsymbol{u}_2(x_0)$. Choose $x_2\in\Sigma$, i.e.,  $\boldsymbol{u}_1(x_2)=\boldsymbol{u}_2(x_2)$. Clearly, $x_2\neq x_0$. We assume that $x_2>x_0$ without loss of generality. Then there exists a point $x_3\in(x_0,x_2]$ such that $\boldsymbol{u}_1(x_3)=\boldsymbol{u}_2(x_3)$ and $\boldsymbol{u}_1>\boldsymbol{u}_2$ in $[x_0,x_3)$. Thus, making use of the condition {\bf (J)} and the fact that $\boldsymbol{u}_1\ge \boldsymbol{u}_2$ in $[0,l]$, we have $\int_0^lJ_1(x_3-y)\boldsymbol{u}_2(y)\dy<\int_0^lJ_1(x_3-y)\boldsymbol{u}_1(y)\dy$. However, analogously, it can be derived by the first inequalities of \eqref{2.21} and \eqref{2.22} that
$\int_0^lJ_1(x_3-y)\boldsymbol{u}_2(y)\dy\ge\int_0^lJ_1(x_3-y)\boldsymbol{u}_1(y)\dy$. This is a contradiction.

Now we consider the case $\Sigma=\emptyset$, i.e., $\boldsymbol{u}_1>\boldsymbol{u}_2$ in $[0,l]$. Then $\Pi\not=\emptyset$. Choose $x_4\in\Pi$, i.e., $\boldsymbol{v}_1(x_4)=\boldsymbol{v}_2(x_4)$. Notice that $G'(z)>0$ and $\boldsymbol{u}_1(x_4)>\boldsymbol{u}_2(x_4)$. It then follows from the second equalities of \eqref{2.21} and \eqref{2.22} that
 $\int_0^lJ_2(x_4-y)\boldsymbol{v}_1(y)\dy<\int_0^lJ_2(x_4-y)\boldsymbol{v}_2(y)\dy$,
 which clearly contradicts $\boldsymbol{v}_1\ge \boldsymbol{v}_2$ in $[0,l]$. The proof is ended.
 \end{proof}

We now give the result concerning the bounded positive solution of \eqref{2.17}. Note that our arguments are different from those in proofs of \cite[Lemmas 3.10 and 3.11]{WD1}, \cite[Proposition 3.4]{DN8} and \cite[Proposition 2.10]{NV}. Especially, the lack of shifting invariance property of \eqref{2.17} brings some difficulties in the proof of the following assertion $(\boldsymbol{u}_l,\boldsymbol{v}_l)\to(U,V)$ as $l\to\yy$.

\begin{lemma}\label{l2.4} Let $\lambda_1(l)$ be defined as above. Then the following statements are valid.\vspace{-1.5mm}
 \begin{enumerate}[$(1)$]
 \item If $\lambda_1(l)>0$, then problem \eqref{2.17} has a unique bounded positive solution $(\boldsymbol{u},\boldsymbol{v})\in X^{++}$ and $(U-\boldsymbol{u},V-\boldsymbol{v})\in X^{++}$. Denote $(\boldsymbol{u},\boldsymbol{v})$ by $(\boldsymbol{u}_l,\boldsymbol{v}_l)$. Then $(\boldsymbol{u}_l,\boldsymbol{v}_l)$ is strictly increasing for large $l>0$ and $(\boldsymbol{u}_l,\boldsymbol{v}_l)\to(U,V)$ locally uniformly in $[0,\yy)$ as $l\to\yy$.\vspace{-1.5mm}
 \item If $\lambda_1(l)\le0$, then $(0,0)$ is the unique nonnegative solution of \eqref{2.17}.\vspace{-1.5mm}
 \end{enumerate}
 \end{lemma}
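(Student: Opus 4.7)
My plan is to split the proof into four pieces: Part (2), then in Part (1) existence/uniqueness, monotonicity in $l$, and finally the limit $l\to\infty$, which I expect to be the main obstacle.

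For Part (2), I would apply Lemma \ref{l2.1}(2). Suppose $(\boldsymbol{u},\boldsymbol{v})$ is a nontrivial nonnegative solution. A strong-maximum-principle argument (if $\boldsymbol{u}(x_0)=0$ then the first equation together with $J_1(0)>0$ forces $\boldsymbol{u}\equiv 0$ on a neighborhood of $x_0$, and hence on $[0,l]$ by propagation) would upgrade $(\boldsymbol{u},\boldsymbol{v})$ to $X^{++}$. The sublinear bounds $H(\boldsymbol{v})\le H'(0)\boldsymbol{v}$ and $G(\boldsymbol{u})\le G'(0)\boldsymbol{u}$ from {\bf(H1)} then convert \eqref{2.17} into $\mathcal{L}[(\boldsymbol{u},\boldsymbol{v})^T]\ge 0\cdot(\boldsymbol{u},\boldsymbol{v})^T$ for the operator $\mathcal{L}$ of \eqref{2.18}, so Lemma \ref{l2.1}(2) gives $\lambda_1(l)\ge 0$. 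Combined with $\lambda_1(l)\le 0$, equality must hold; and the equality clause forces $G(\boldsymbol{u})\equiv G'(0)\boldsymbol{u}$, contradicting the strict decrease of $G(z)/z$ on $(0,\infty)$.

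For existence in Part (1), I would use the upper/lower solution method. The constant pair $(U,V)$ is an upper solution because $\int_0^l J_1(x-y)\,\dy - j_1(x) = -\int_l^\infty J_1(x-y)\,\dy \le 0$ (strict near $x=l$) and $aU=H(V)$. For the lower solution, take $\varepsilon(\phi_1,\phi_2)$, where $(\phi_1,\phi_2)$ is the positive eigenfunction of \eqref{2.18} provided by $\lambda_1(l)>0$; the first-equation residual equals $\varepsilon\lambda_1(l)\phi_1 + [H(\varepsilon\phi_2)-\varepsilon H'(0)\phi_2]$, and since $\phi_1$ is bounded below on the compact $[0,l]$ while the bracket is $o(\varepsilon)$ uniformly, this is positive for small $\varepsilon$. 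A standard monotone iteration exploiting the cooperative structure then yields a solution between them. Uniqueness would follow from Lemma \ref{l2.3} applied in both directions to any two positive solutions, and the strict domination $(U-\boldsymbol{u}, V-\boldsymbol{v})\in X^{++}$ from the strict form of Lemma \ref{l2.3} using the upper-solution strictness near $x=l$. For monotonicity in $l$, given $l_1<l_2$ in the regime $\lambda_1>0$, I would restrict $(\boldsymbol{u}_{l_2},\boldsymbol{v}_{l_2})$ to $[0,l_1]$ and observe
\[d_1\int_0^{l_1}\!J_1(x-y)\boldsymbol{u}_{l_2}(y)\,\dy - d_1 j_1(x)\boldsymbol{u}_{l_2} - a\boldsymbol{u}_{l_2}+H(\boldsymbol{v}_{l_2})=-d_1\int_{l_1}^{l_2}\!J_1(x-y)\boldsymbol{u}_{l_2}(y)\,\dy\le 0,\]
strict near $x=l_1$ by $J_1(0)>0$, and similarly for the second equation; so this restriction is a strict upper solution of \eqref{2.17} on $[0,l_1]$, and Lemma \ref{l2.3} gives $\boldsymbol{u}_{l_2}>\boldsymbol{u}_{l_1}$, $\boldsymbol{v}_{l_2}>\boldsymbol{v}_{l_1}$ on $[0,l_1]$.

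The hardest step is the convergence $(\boldsymbol{u}_l,\boldsymbol{v}_l)\to(U,V)$ locally uniformly as $l\to\infty$. Monotonicity and boundedness by $(U,V)$ yield a pointwise limit $(\boldsymbol{u}^\ast,\boldsymbol{v}^\ast)$ which, by dominated convergence, satisfies the limit integral equations on $[0,\infty)$ with $0<\boldsymbol{u}^\ast\le U$ and $0<\boldsymbol{v}^\ast\le V$. The task reduces to showing any such bounded positive solution equals $(U,V)$. I would attempt a sliding argument: set $\mu=\inf\{\kappa\ge 1:\kappa\boldsymbol{u}^\ast\ge U,\ \kappa\boldsymbol{v}^\ast\ge V\text{ on }[0,\infty)\}$; assuming $\mu>1$, choose an extremal sequence $x_n$ along which $\mu\boldsymbol{v}^\ast(x_n)\to V$ (the alternative $\mu\boldsymbol{u}^\ast(x_n)\to U$ reduces to this using strict monotonicity of $H$ via the first equation), multiply the $\boldsymbol{v}^\ast$ equation by $\mu$, substitute $\mu\boldsymbol{v}^\ast\ge V$ into the nonlocal integral, and invoke the strict decrease of $G(z)/z$ to get $\mu G(\boldsymbol{u}^\ast(x_n))>G(\mu\boldsymbol{u}^\ast(x_n))\ge G(U)=bV$—contradicting the equation's implication $\mu G(\boldsymbol{u}^\ast(x_n))\le bV$ in the limit. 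The delicate point is guaranteeing $\mu<\infty$, i.e., a uniform positive lower bound on $(\boldsymbol{u}^\ast,\boldsymbol{v}^\ast)$; I would establish this by exploiting the asymptotic shift invariance of the limit problem as $x\to\infty$ (since $j_i(x)\to 1$) together with a Liouville-type argument applied to translation limits of $(\boldsymbol{u}^\ast,\boldsymbol{v}^\ast)$, which compensates for the failure of shift invariance on the full half-line noted in the introduction.
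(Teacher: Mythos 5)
Your handling of part (2) and of the existence, uniqueness, strict domination by $(U,V)$, and monotonicity in $l$ in part (1) is correct and essentially coincides with the paper's proof: the paper likewise takes $(U,V)$ as an upper solution (strict near $x=l$), $\varepsilon(\phi_1,\phi_2)$ as a lower solution via the residual identity you wrote, deduces uniqueness and $(U-\boldsymbol{u},V-\boldsymbol{v})\in X^{++}$ from Lemma \ref{l2.3}, and obtains monotonicity by restricting the solution on the larger interval. Two remarks: the paper inserts a separate step proving continuity of the solution produced by the iteration (via the implicit function theorem), which you should not omit since Lemma \ref{l2.3} is stated for pairs in $X^{++}$; and your self-contained argument for part (2) is sound, whereas the paper simply cites references there. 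The genuine divergence is the final step $(\boldsymbol{u}_l,\boldsymbol{v}_l)\to(U,V)$. The paper does not slide by a multiplicative constant: it sets $\tilde{\boldsymbol{u}}_{\rm inf}=\inf_{[0,\infty)}\tilde{\boldsymbol{u}}$, distinguishes whether the infima are attained or only approached along $x_n\to\infty$, uses $j_i(x_n)\to1$ and $\liminf_n\int_0^{\infty}J_2(x_n-y)\tilde{\boldsymbol{v}}(y)\,\dy\ge\tilde{\boldsymbol{v}}_{\rm inf}$ to arrive at $a\tilde{\boldsymbol{u}}(x_0)\ge H(\tilde{\boldsymbol{v}}(x_0))$ and $b\tilde{\boldsymbol{v}}(x_0)\ge G(\tilde{\boldsymbol{u}}(x_0))$ at a point with $\tilde{\boldsymbol{v}}(x_0)\in(0,V)$, and contradicts $G(H(z)/a)>bz$ for $0<z<V$; when an infimum is attained, positivity of the relevant value is free because $\tilde{\boldsymbol{u}}\ge\boldsymbol{u}_l>0$ pointwise.

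The gap is precisely the step you flag: $\mu<\infty$, i.e., $\inf_{[0,\infty)}\boldsymbol{u}^{\ast}>0$ and $\inf_{[0,\infty)}\boldsymbol{v}^{\ast}>0$. ``Translation limits plus a Liouville-type theorem'' does not close it. If $\boldsymbol{u}^{\ast}(x_n)\to0$ with $x_n\to\infty$, the translates converge (along a subsequence, using the uniform continuity inherited from the convolution terms) to a bounded nonnegative solution of the shift-invariant whole-line system vanishing at the origin, and the strong maximum principle forces that limit to be identically $(0,0)$ --- which is itself a perfectly legitimate entire solution. A Liouville classification of bounded \emph{positive} entire solutions therefore excludes nothing; what is missing is a persistence statement ruling out the trivial translation limit. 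The standard way to obtain it is a comparison from below that is uniform in the position of the window: since $\gamma_A>0$ and $j_i(x)\to1$, for $L$ large the localized eigenvalue problems on $[R,R+L]$ have principal eigenvalue bounded below by a positive constant independent of large $R$, so $(\boldsymbol{u}_l,\boldsymbol{v}_l)$, and hence $(\boldsymbol{u}^{\ast},\boldsymbol{v}^{\ast})$, dominates the positive steady states of these localized problems, whose values near the window centers admit a uniform positive lower bound. Without some such argument your sliding step never starts; and once you do have positive infima, the paper's algebraic argument ($a\tilde{\boldsymbol{u}}_{\rm inf}\ge H(\tilde{\boldsymbol{v}}_{\rm inf})$, $b\tilde{\boldsymbol{v}}_{\rm inf}\ge G(\tilde{\boldsymbol{u}}_{\rm inf})$ versus the uniqueness of the root of \eqref{1.2}) concludes at once, so the sliding machinery adds nothing beyond that point.
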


{\setlength\arraycolsep{2pt}

\begin{proof}(1) In view of Proposition \ref{p2.2}, we have that if $\lambda_1(l)>0$, then $\gamma_A>0$, where $\gamma_A$ is defined in \eqref{2.7} and the matrix $A$ here is composed of $a_{11}=-a$, $a_{12}=H'(0)$, $a_{21}=G'(0)$ and $a_{22}=-b$. It is easy to see that $\gamma_A>0$ if and only if $\mathcal{R}_0>1$.

{\it Step 1}: {\it The existence}. Define an operator ${\it\Gamma}$: $X^+\to X^+$ by \[{\it\Gamma}[\phi]=\begin{pmatrix}
 \dd\frac1{d_1j_1(x)+a}\left(d_1\int_0^lJ_1(x-y)\phi_1(y)\dy+H(\phi_2)\right) \\[4mm] \dd\frac1{d_2j_2(x)+b}\left(d_2\int_0^lJ_2(x-y)\phi_2(y)\dy+G(\phi_1)\right)
  \end{pmatrix}.\]
Clearly, ${\it\Gamma}$ is increasing in $X^+$. Simple computations show
 \bes
 \frac1{d_1j_1(x)+a}\left(d_1\int_0^lJ_1(x-y)U\dy+H(V)\right)&\le&
 \frac1{d_1j_1(x)+a}\left[d_1j_1(x)U+H(V)\right]\nonumber\\
 &=&\frac1{d_1j_1(x)+a}\left[d_1j_1(x)U+aU\right]\nonumber\\[1mm]
 &=&U,\;\;\;x\in[0,l],\lbl{2.23}\\
 \frac1{d_2j_2(x)+b}\left(d_2\int_0^lJ_2(x-y)V\dy+G(U)\right)&\le&
 V,\;\;\;x\in[0,l],\lbl{2.24}
 \ees
which implies that ${\it\Gamma}[(U,V)]\le(U,V)$. Moreover, \eqref{2.23} and \eqref{2.24} are strict when $x<l$ and near $l$.

Let $\phi=(\phi_1,\phi_2)\in X^{++}$ be the corresponding eigenfunction of $\lambda_1(l)$ with $\|\phi\|_{X}=1$. We claim that if $\ep$ is sufficiently small, then ${\it\Gamma}[\ep\phi]\ge\ep\phi$. In fact, the direct calculation yields
 \bess
 &&\frac1{d_1j_1(x)+a}\left[d_1\int_0^lJ_1(x-y)\phi_1(y)\dy+H(\phi_2)\right]
 -\ep\phi_1\\[1mm]
 &\ge&\frac\ep{d_1j_1(x)+a}\left[\lambda_1(l)\phi_1+d_1j_1(x)\phi_1+a\phi_1
 +H(\ep)/\ep-H'(0)\right]-\ep\phi_1\\[1mm]
 &\ge&\frac\ep{d_1j_1(x)+a}\left[\lambda_1(l)\phi_1+H(\ep)/\ep-H'(0)\right]\ge 0
 \eess
provided that $\ep$ is small enough. Similarly,
 \[\frac1{d_2j_2(x)+b}\left[d_2\int_0^lJ_2(x-y)\ep\phi_2(y)\dy
 +G(\ep\phi_1)\right]\ge\ep\phi_2\]
 with $\ep$ small enough. Thus our claim holds.

Then by an iteration or upper-lower solution method, problem \eqref{2.17} has at least one  solution $(\boldsymbol{u},\boldsymbol{v})$ satisfying $(\ep\phi_1,\ep\phi_2)\le(\boldsymbol{u},\boldsymbol{v})\le (U,V)$ in $[0,l]$.

{\it Step 2}: {\it The continuity}. It will be proved that $(\boldsymbol{u},\boldsymbol{v})$ is continuous in $[0,l]$ by using the implicit function theorem and some basic analysis. Define
 \bess
 &Q_1(x)=\dd d_1\int_0^lJ_1(x-y)\boldsymbol{u}(y)\dy, ~ ~ Q_2(x)=d_2\int_0^lJ_2(x-y)\boldsymbol{v}(y)\dy,\\[1mm]
 &P(x,y,z)=\big(Q_1(x)-d_1j_1(x)y-ay+H(z), \;Q_2(x)-d_2j_2(x)z-bz+G(y)\big).
 \eess
Clearly, $P(x,y,z)$ is continuous in $\{(x,y,z):0\le x\le l,y\ge0,z\ge0\}$, and $P(x, \boldsymbol{u}(x), \boldsymbol{v}(x))=(0,0)$ for all $0<x<l$.
With regard to $0<x<l$, $y>0$, $z>0$ satisfying $P(x,y,z)=(0,0)$, direct computations yield
 \bess
 \frac{\partial P(x,y,z)}{\partial(y,z)}&=&\begin{pmatrix}                                                  -d_1j_1(x)-a &  H'(z) \\
  G'(y) & -d_2j_2(x)-b
   \end{pmatrix},
 \eess
which is continuous for $0<x<l$, $y,z>0$, and
 \bess
 {\rm det}\frac{\partial P(x,y,z)}{\partial(y,z)}
 =\frac{(Q_1(x)+H(z))(Q_2(x)+G(y))}{yz}-H'(z)G'(y)\ge\frac{H(z)G(y)}{yz}-H'(z)G'(y)>0.
 \eess
Hence, by the implicit function theorem, we know that $(\boldsymbol{u},\boldsymbol{v})$ is continuous in $(0,l)$.

In the following we prove that $(\boldsymbol{u},\boldsymbol{v})$ is continuous at $x=0, l$. We only deal with  $x=0$. Recall that $(\ep\phi_1,\ep\phi_2)\le(\boldsymbol{u},\boldsymbol{v})\le (U,V)$ and $\phi\in X^{++}$. Let $x_n\to 0$ and $(\boldsymbol{u}(x_n),\boldsymbol{v}(x_n))\to (\boldsymbol{u}_0, \boldsymbol{v}_0)$ as $n\to\infty$. Clearly, $\boldsymbol{u}_0$ and $\boldsymbol{v}_0$ are positive. Taking $x=x_n$ in \eqref{2.17} and then letting $n\to\infty$ yield
 \bess\left\{\begin{aligned}
&d_1\int_0^lJ_1(y)\boldsymbol{u}(y)\dy-d_1j_1(0)\boldsymbol{u}_0-a\boldsymbol{u}_0+H(\boldsymbol{v}_0)=0, & & x\in[0,l],\\
&d_2\int_0^lJ_2(y)\boldsymbol{v}(y)\dy-d_2j_2(0)\boldsymbol{v}_0-b\boldsymbol{v}_0+G(\boldsymbol{u}_0)=0, & & x\in[0,l].
 \end{aligned}\right.
 \eess
Then setting $x=0$ in \eqref{2.17}, we can argue as in the proof of Lemma \ref{l2.3} to derive that $(\boldsymbol{u}_0, \boldsymbol{v}_0)=(\boldsymbol{u}(0), \boldsymbol{v}(0))$. Hence, $(\boldsymbol{u},\boldsymbol{v})$ is continuous at $x=0$.

{\it Step 3}: {\it The uniqueness and $(U-\boldsymbol{u},V-\boldsymbol{v})\in X^{++}$}. These two results directly follow from Lemma \ref{l2.3} since \eqref{2.23} and \eqref{2.24} are strict when $x<l$ and near $l$. The details are ignored.

{\it Step 4}: {\it The monotonicity of $(\boldsymbol{u}_l,\boldsymbol{v}_l)$ in $l$ and convergence of $(\boldsymbol{u}_l,\boldsymbol{v}_l)$ as $l\to\yy$.} For any large $l_1>l_2>0$, let $(\boldsymbol{u}_i,\boldsymbol{v}_i)$ be the bounded positive solutions of \eqref{2.17} with $l=l_i$. Then we have
 \bess\left\{\begin{aligned}
&d_1\int_0^{l_2}J_1(x-y)\boldsymbol{u}_1(y)\dy-d_1j_1(x)\boldsymbol{u}_1-a\boldsymbol{u}_1+H(\boldsymbol{v}_1)<0, & & x\in[0,l_2]\\
&d_2\int_0^{l_2}J_2(x-y)\boldsymbol{v}_1(y)\dy-d_2j_2(x)\boldsymbol{v}_1-b\boldsymbol{v}_1+G(\boldsymbol{u}_1)<0, & & x\in[0,l_2].
 \end{aligned}\right.
 \eess
Thus, by Lemma \ref{l2.3}, $(\boldsymbol{u}_1,\boldsymbol{v}_1)>(\boldsymbol{u}_2,\boldsymbol{v}_2)$. That is, $(\boldsymbol{u}_l,\boldsymbol{v}_l)$ is strictly increasing in $l$. Recalling $\boldsymbol{u}_l\le U$ and $\boldsymbol{v}_l\le V$, we have that the limits $\lim_{l\to\yy}\boldsymbol{u}_l(x)=\tilde{\boldsymbol{u}}(x)$ and $\lim_{l\to\yy}\boldsymbol{v}_l(x)=\tilde{\boldsymbol{v}}(x)$ exist for all $x\ge0$ with $0<\tilde{\boldsymbol{u}}\le U$ and $0<\tilde{\boldsymbol{v}}\le V$. The dominated convergence theorem leads to
  \bes\left\{\begin{aligned}
&d_1\int_0^{\yy}J_1(x-y)\tilde{\boldsymbol{u}}(y)\dy-d_1j_1(x)\tilde{\boldsymbol{u}}-a\tilde{\boldsymbol{u}}+H(\tilde{\boldsymbol{v}})=0, & & x\in[0,\yy),\\
&d_2\int_0^{\yy}J_2(x-y)\tilde{\boldsymbol{v}}(y)\dy-d_2j_2(x)\tilde{\boldsymbol{v}}-b\tilde{\boldsymbol{v}}+G(\tilde{\boldsymbol{u}})=0, & & x\in[0,\yy).
 \end{aligned}\right.\lbl{2.25}
 \ees
Then, by the similar lines as in Step 2, we can show that $(\tilde{\boldsymbol{u}},\tilde{\boldsymbol{v}})$ is continuous on $[0,\yy)$.

It will be proved that $(\tilde{\boldsymbol{u}},\tilde{\boldsymbol{v}})=(U,V)$.  Obviously, it is sufficient to show $\inf_{[0,\yy)}\tilde{\boldsymbol{u}}=U$ or $\inf_{[0,\yy)}\tilde{\boldsymbol{v}}=V$ since these two equalities are equivalent. To save space, we denote $\tilde{\boldsymbol{u}}_{\rm inf}=\inf_{[0,\yy)}\tilde{\boldsymbol{u}}$ and $\tilde{\boldsymbol{v}}_{\rm inf}=\inf_{[0,\yy)}\tilde{\boldsymbol{v}}$. We now prove $\tilde{\boldsymbol{u}}_{\rm inf}=U$. Assume on the contrary that $\tilde{\boldsymbol{u}}_{\rm inf}<U$.

{\it Case 1}: $\tilde{\boldsymbol{u}}(x_0)=\tilde{\boldsymbol{u}}_{\rm inf}$ for some $x_0\ge0$. Then
 \bes
 0\le d_1\int_0^{\yy}J_1(x_0-y)\tilde{\boldsymbol{u}}(y)\dy-d_1j_1(x_0)\tilde{\boldsymbol{u}}(x_0)
 =a\tilde{\boldsymbol{u}}(x_0)-H(\tilde{\boldsymbol{v}}(x_0))
 \lbl{2.26}\ees
as $\tilde{\boldsymbol{u}}(y)\ge \tilde{\boldsymbol{u}}_{\rm inf}=\tilde{\boldsymbol{u}}(x_0)$ for all $y\ge 0$. Therefore, $H(\tilde{\boldsymbol{v}}(x_0))\le a\tilde{\boldsymbol{u}}(x_0)<aU=H(V)$.
Since $H(z)$ is strict increasing in $z\ge 0$, it follows that $\tilde{\boldsymbol{v}}(x_0)<V$. So, $\tilde{\boldsymbol{v}}_{\rm inf}<V$.

If $\tilde{\boldsymbol{v}}(x_1)=\tilde{\boldsymbol{v}}_{\rm inf}$ for some $x_1\ge 0$. Similar to the above, we can get  $b\tilde{\boldsymbol{v}}(x_1)-G(\tilde{\boldsymbol{u}}(x_1))\ge0$, which implies $\tilde{\boldsymbol{u}}(x_1)<U$. To sum up, we have
   \bess
a\tilde{\boldsymbol{u}}(x_0)-H(\tilde{\boldsymbol{v}}(x_0))\ge0, ~ ~ b\tilde{\boldsymbol{v}}(x_1)-G(\tilde{\boldsymbol{u}}(x_1))\ge0, ~ ~ \tilde{\boldsymbol{u}}(x_0)\le \tilde{\boldsymbol{u}}(x_1)<U, ~ ~ \tilde{\boldsymbol{v}}(x_1)\le \tilde{\boldsymbol{v}}(x_0)<V.
 \eess
It follows that $G\big({H(\tilde{\boldsymbol{v}}(x_0))}/a\big)\le b\tilde{\boldsymbol{v}}(x_0)$. This contradicts the fact that $(U, V)$ is unique positive root of \qq{1.2}. So $\tilde{\boldsymbol{v}}(x)>\tilde{\boldsymbol{v}}_{\rm inf}$ for all $x\ge 0$.

Then there exists a sequence $\{x_n\}$ with $x_n\nearrow\yy$ such that $\tilde{\boldsymbol{v}}(x_n)\to \tilde{\boldsymbol{v}}_{\rm inf}$ as $n\to\yy$. By passing a subsequence, still denoted by itself, we have $\tilde{\boldsymbol{u}}(x_n)\to \boldsymbol{u}_0$ as $n\to\yy$. Clearly,
 \bes
 \tilde{\boldsymbol{u}}(x_0)=\tilde{\boldsymbol{u}}_{\rm inf}\le \boldsymbol{u}_0\le U,\;\;\;\text{and}\;\; \tilde{\boldsymbol{v}}_{\rm inf}\le \tilde{\boldsymbol{v}}(x_0)<V.\lbl{2.27}\ees
As $\tilde{\boldsymbol{v}}(y)>\tilde{\boldsymbol{v}}_{\rm inf}$ for all $y\ge 0$, it is clear that
 \bes
  \liminf_{n\to\yy}\int_0^{\yy}J_2(x_n-y)\tilde{\boldsymbol{v}}(y)\dy\ge
  \tilde{\boldsymbol{v}}_{\rm inf}\liminf_{n\to\yy}\int_{-x_n}^{\yy}J_2(y)\dy
  =\tilde{\boldsymbol{v}}_{\rm inf},\lbl{2.28}
  \ees
and $j_i(x_n)\to 1$ as $n\to\yy$, $i=1,2$. Together with the equation of $\tilde{\boldsymbol{v}}$, we have $b\tilde{\boldsymbol{v}}_{\rm inf}\ge G(\boldsymbol{u}_0)$. Together with \qq{2.26} and \qq{2.27}, we have
   \bess
a\tilde{\boldsymbol{u}}(x_0)-H(\tilde{\boldsymbol{v}}(x_0))\ge0, ~ ~ b\tilde{\boldsymbol{v}}(x_0)-G(\tilde{\boldsymbol{u}}(x_0))\ge0, ~ ~ \tilde{\boldsymbol{u}}(x_0)\le \boldsymbol{u}_0\le U, ~ ~ \tilde{\boldsymbol{v}}(x_0)<V,
 \eess
which also leads to $G\big({H(\boldsymbol{v}(x_0))}/a\big)\le b\boldsymbol{v}(x_0)$. Analogously, we can get a contradiction.

{\it Case 2}: $\tilde{\boldsymbol{u}}(x)>\tilde{\boldsymbol{u}}_{\rm inf}$ for all $x\ge0$.  If there exists $x_0\ge 0$ such that $\tilde{\boldsymbol{v}}(x_0)=\tilde{\boldsymbol{v}}_{\rm inf}$, by exchanging  the positions of $\tilde{\boldsymbol{u}}$ and $\tilde{\boldsymbol{v}}$, similar to the above (the third paragraph in Case 1) we can derive a contradiction. Therefore, $\tilde{\boldsymbol{u}}(x)>\tilde{\boldsymbol{u}}_{\rm inf}$ and $\tilde{\boldsymbol{v}}(x)>\tilde{\boldsymbol{v}}_{\rm inf}$ for all $x\ge0$. We can find $x_n\nearrow\yy$ and $x_n'\nearrow\yy$ such that $\tilde{\boldsymbol{u}}(x_n)\to \tilde{\boldsymbol{u}}_{\rm inf}$ and $\tilde{\boldsymbol{v}}(x'_n)\to \tilde{\boldsymbol{v}}_{\rm inf}$ as $n\to\infty$. Moreover, by selecting subsequences if necessary, we may assume that $\tilde{\boldsymbol{u}}(x'_n)\to \boldsymbol{u}_0$ and $\tilde{\boldsymbol{v}}(x_n)\to \boldsymbol{v}_0$. Clearly, $\tilde{\boldsymbol{u}}_{\rm inf}\le \boldsymbol{u}_0\le U, ~ ~ \tilde{\boldsymbol{v}}_{\rm inf}\le \boldsymbol{v}_0\le V$.
Taking $x=x_n$ and $x=x'_n$ in the first and second equations of \qq{2.25}, respectively, and then letting $n\to\yy$ we can obtain that, similar to the above (cf. the derivation of \qq{2.28}), $a\tilde{\boldsymbol{u}}_{\rm inf}\ge H(\boldsymbol{v}_0)\ge H(\tilde{\boldsymbol{v}}_{\rm inf})$ and $b\tilde{\boldsymbol{v}}_{\rm inf}\ge G(\boldsymbol{u}_0)\ge G(\tilde{\boldsymbol{u}}_{\rm inf})$.
Note that $\tilde{\boldsymbol{u}}_{\rm inf}<U$ and $\tilde{\boldsymbol{v}}_{\rm inf}<V$. Then  a similar contradiction can be obtained.

The above arguments show that $\tilde{\boldsymbol{u}}_{\rm inf}=U$. Therefore $(\tilde{\boldsymbol{u}},\tilde{\boldsymbol{v}})=(U,V)$.
Then by Dini's theorem, conclusion (1) is obtained.

(2) Since one can prove this assertion by following similar lines as in \cite[Proposition 3.4]{DN8} or \cite[Proposition 2.10]{NV}, we omit the details. The proof is complete.
 \end{proof}

At the end of this section, we show dynamics of the following problem with fixed boundary:
\bes\left\{\begin{aligned}\label{2.29}
&u_t=d_1\int_0^lJ_1(x-y)u(y)\dy-d_1j_1(x)u-au+H(v), & & t>0, ~ ~ x\in[0,l]\\
&v_t=d_2\int_0^lJ_2(x-y)v(y)\dy-d_2j_2(x)v-bv+G(u), & & t>0, ~ ~ x\in[0,l],\\
&u(0,x)=\tilde{u}_0(x), ~ ~v(0,x)=\tilde{v}_0(x),
 \end{aligned}\right.
 \ees
where $(\tilde u_0,\tilde v_0)\in X^+\setminus\{(0,0)\}$. Especially, it will be shown that $(0,0)$ can be exponentially or algebraically stable.

 \begin{lemma}\label{l2.5} Let $(u,v)$ be the unique solution of \eqref{2.29}. Then the following statements are valid.\vspace{-1.5mm}
 \begin{enumerate}[$(1)$]
 \item If $\lambda_1(l)>0$, then $(u(t,x),v(t,x))\to(\boldsymbol{u},\boldsymbol{v})$ in $X$ as $t\to\yy$, where $(\boldsymbol{u},\boldsymbol{v})$ is the unique positive steady state of \qq{2.17}.\vspace{-1.5mm}
 \item If $\lambda_1(l)\le0$, then $(u(t,x),v(t,x))\to(0,0)$ in $X$ as $t\to\yy$. Moreover,
     \begin{enumerate}[$(1)$]
 \item[$(2a)$] if $\lambda_1(l)<0$, then $({\rm e}^{kt}u(t,x),{\rm e}^{kt}v(t,x))\to(0,0)$ in $X$ as $t\to\yy$ for all $k\in(0,-\lambda_1(l))$;
 \item[$(2b)$]if $\lambda_1(l)=0$, and $H, G\in C^2([0,\yy))$ and $H''(z)<0$, $G''(z)<0$ for $z\ge 0$, then there exists a $k_0\in(0,1)$ such that $((t+1)^ku(t,x),(t+1)^kv(t,x))\to(0,0)$ in $X$ as $t\to\yy$ for all $k\in(0,k_0]$.\vspace{-1.5mm}
 \end{enumerate} \end{enumerate}
 \end{lemma}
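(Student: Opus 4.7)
The plan is to prove all three parts by constructing super- and sub-solutions of \eqref{2.29} and invoking the comparison principle for this cooperative system, together with the steady-state uniqueness in Lemma \ref{l2.4}.

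For part (1), with $\lambda_1(l)>0$, I would squeeze $(u,v)$ between two monotone trajectories converging to $(\boldsymbol{u}_l,\boldsymbol{v}_l)$. Any large constant pair $(C,C)$ satisfies $d_1\int_0^l J_1(x-y)C\dy - d_1 j_1(x)C\le 0$, so it is a super-steady-state of \eqref{2.17}; the PDE solution starting from $(C,C)$ decreases monotonically in $t$ and converges to a steady state which, by Lemma \ref{l2.4}(1), equals $(\boldsymbol{u}_l,\boldsymbol{v}_l)$. A standard strong-positivity argument (as in \cite{NV}) yields $(u(t_0,\cdot),v(t_0,\cdot))\ge \epsilon\phi$ for some $t_0,\epsilon>0$ and the positive principal eigenfunction $\phi$ of \eqref{2.18}; since $H(z)=H'(0)z+o(z)$ near $0$ and $\mathcal{L}\phi=\lambda_1(l)\phi$, the pair $(\epsilon\phi_1,\epsilon\phi_2)$ is a sub-steady-state for $\epsilon$ small, so the trajectory from $\epsilon\phi$ is monotonically increasing and also converges to $(\boldsymbol{u}_l,\boldsymbol{v}_l)$. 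Comparison and Dini's theorem then yield uniform convergence.

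For part (2), the key observation is that $(Ce^{\lambda_1(l)t}\phi_1,\,Ce^{\lambda_1(l)t}\phi_2)$ is a super-solution of \eqref{2.29} for any sign of $\lambda_1(l)$; direct substitution using \eqref{2.18} reduces the super-solution inequality for the $u$-component to $H'(0)\bar v-H(\bar v)\ge 0$, which holds because $H(z)/z$ is decreasing (and symmetrically for $G$). Choosing $C$ so that $C\phi_i$ dominates the initial data yields part (2a): if $\lambda_1(l)<0$, then $u,v\le Ce^{\lambda_1(l)t}\phi_i$, so $(e^{kt}u,e^{kt}v)\to (0,0)$ in $X$ for every $k\in(0,-\lambda_1(l))$. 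When $\lambda_1(l)=0$ the same bound only gives boundedness; for the qualitative limit $(u,v)\to(0,0)$, note that $(C\phi_1,C\phi_2)$ is a super-steady-state by strict concavity of $H,G$, so the trajectory starting there decreases monotonically to a non-negative steady state which by Lemma \ref{l2.4}(2) must be $(0,0)$, and comparison plus Dini give convergence in $X$.

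For the algebraic rate in (2b), I would take the ansatz $\bar u=M(t+1)^{-k'}\phi_1$, $\bar v=M(t+1)^{-k'}\phi_2$ with $k'\in(0,1)$. Using $\mathcal{L}\phi=0$, the super-solution inequality for the $u$-component reduces to
\[
-k'M(t+1)^{-k'-1}\phi_1+H'(0)\bar v-H(\bar v)\ge 0.
\]
Since $H\in C^2$ with $H''<0$ on $[0,\infty)$, Taylor's formula yields $H'(0)z-H(z)\ge cz^2$ on any compact interval $[0,Z]$ with $c=c(Z)>0$; with $Z=M\max_{[0,l]}\phi_2$ the left-hand side is bounded below by $M(t+1)^{-2k'}[cM\phi_2^2-k'(t+1)^{k'-1}\phi_1]$, and since $(t+1)^{k'-1}\le 1$ when $k'<1$ and $t\ge 0$, it suffices to enlarge $M$ so that $cM\min\phi_2^2\ge k'\max\phi_1$, $M\min\phi_1\ge\|\tilde u_0\|_\infty$, together with the symmetric inequalities for the $v$-equation. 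Then $u\le M(t+1)^{-k'}\phi_1$ and $(t+1)^k u\to 0$ uniformly on $[0,l]$ for any $k<k'$; taking any $k_0\in(0,k')$ completes the proof. The main subtlety is that the concavity constant $c(Z)$ may shrink as $Z$ grows, so one must check that $cM\min\phi_2^2\ge k'\max\phi_1$ still holds after $M$ has been enlarged to absorb the initial data — this is guaranteed by $H''(0)<0$ and continuity of $H''$ — while the strong-positivity input used in part (1) and the Dini step follow the patterns in \cite{DN8,NV,WD1}.
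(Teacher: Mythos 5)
Your proposal is essentially the paper's own argument: the quantitative rates in (2a) and (2b) are obtained exactly as in the paper by the upper solutions $M{\rm e}^{-kt}\phi$ (equivalently your $C{\rm e}^{\lambda_1(l)t}\phi$) and $M(t+1)^{-k}\phi$, using $H(z)/z\le H'(0)$ for the exponential case and the second-order Taylor bound $H'(0)z-H(z)\ge -\tfrac12\max H''\,z^2$ for the algebraic case, while the qualitative convergence statements are handled by the standard monotone-trajectory/comparison scheme that the paper simply delegates to \cite{DN8}. Two small points: in part (1) the constant pair $(C,C)$ need not be a super-steady-state, since $aC\ge H(C)$ can fail (e.g.\ $H(v)=cv$ with $c>a$); one should instead use $K(U,V)$ with $K\ge1$, which works because $H(KV)\le KH(V)=aKU$ by the monotonicity of $H(z)/z$. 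In (2b) the worry about $c(Z)$ shrinking is resolved simply by fixing $M$ from the initial data \emph{first} and only then choosing $k'\le -\tfrac{M\min\phi_2^2}{2\max\phi_1}\max_{[0,\,M\max\phi_2]}H''$, which is positive because $H''<0$ on that compact interval; this is exactly the order of quantifiers in the paper.
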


 \begin{proof}The convergence results in $X$ can be proved by using similar methods as in \cite[Proposition 3.4]{DN8}. Hence we only prove the exponential stability and algebraic stability, respectively, which are obtained by constructing suitable upper solutions.

{\it Exponential stability.} Let $\phi=(\phi_1,\phi_2)$ be the corresponding positive eigenfunction of $\lambda_1(l)$. Define $\bar{u}=M{\rm e}^{-kt}\phi_1(x)$ and $\bar{v}=M{\rm e}^{-kt}\phi_2(x)$
with positive constants $M$ and $k$ to be determined later. We now show that, by choosing suitable $M$ and $k$, $(\bar{u},\bar{v})$ is an upper solution of \eqref{2.29}. Then the desired result follows from a comparison argument.

 Direct computations yield that, for $t>0$ and $x\in[0,l]$,
 \bess
 &&\bar{u}_t-d_1\int_0^lJ_1(x-y)\bar{u}(t,y)\dy+d_1j_1(x)\bar{u}+a\bar{u}-H(\bar{v})\\
 &=&M{\rm e}^{-kt}\left(-k\phi_1-\lambda_1(l)\phi_1+H'(0)\phi_2
 -\frac{H(M{\rm e}^{-kt}\phi_2)}{M{\rm e}^{-kt}}\right)\\
 &\ge& M{\rm e}^{-kt}\left(-k-\lambda_1(l)\right)\phi_1\ge0
 \eess
 provided that $0<k\le -\lambda_1(l)$. Analogously, we can show
 \[\bar{v}_t\ge d_2\int_0^lJ_2(x-y)\bar{v}(t,y)\dy-d_2j_2(x)\bar{v}-b\bar{v}+G(\bar{u})\]
 for $t>0$ and $x\in[0,l]$ if $k\le -\lambda_1(l)$. Moreover, let $M$ large enough such that $\bar{u}(0,x)=M\phi_1(x)\ge \tilde{u}_0(x)$ and $\bar{v}(0,x)=M\phi_2(x)\ge \tilde{v}_0(x)$ for $x\in[0,l]$. Hence $(\bar{u},\bar{v})$ is an upper solution of \eqref{2.29}.

{\it Algebraic stability.} Remember $\lambda_1(l)=0$ in this case, and let $\phi=(\phi_1,\phi_2)$ be the corresponding positive eigenfunction. Fix $M>0$ such that $\bar{u}(0,x)=M\phi_1(x)\ge\tilde{u}_0(x)$ and $\bar{v}(0,x)=M\phi_2(x)\ge\tilde{v}_0(x)$. Let $\bar{u}=M(t+1)^{-k}\phi_1$ and $\bar{v}=M(t+1)^{-k}\phi_2$,
where $k>0$ is chosen later. As above, we only need to show that $(\bar{u},\bar{v})$ is an upper solution of \eqref{2.29}. For clarity, denote $M_i=\max_{[0,l]}\phi_i$ and $m_i=\min_{[0,l]}\phi_i$ with $i=1,2$.

For $t>0$ and $x\in[0,l]$, using the properties of $H$ and the mean value theorem, we have
 \bess
 &&\bar{u}_t-d_1\int_0^lJ_1(x-y)\bar{u}(t,y)\dy+d_1j_1(x)\bar{u}+a\bar{u}-H(\bar{v})\\[1mm]
 &=&\frac{M}{(t+1)^k}\left(\frac{-k\phi_1}{t+1}+H'(0)\phi_2-
 \frac{H(M(t+1)^{-k}\phi_2)}{M(t+1)^{-k}}\right)\\[1mm]
 &=&-\frac{M}{(t+1)^k}\left(\frac{k\phi_1}{t+1}+\frac{
 M\phi^2_2}{2(t+1)^k}H''(\xi)\right) ~ ~ \big( \text{here}\;\; \xi\in(0,M(t+1)^{-k}\phi_2)\big)\\[1mm]
 &\ge&-\frac{M}{(t+1)^k}\left(\frac{kM_1}{t+1}+\frac{Mm^2_2}{2(t+1)^k}\dd\max_{[0,\, MM_2]} H''\right)=-\frac{M}{(t+1)^{2k}}\left(\frac{kM_1}{(t+1)^{1-k}}
 +\frac{Mm^2_2}2\dd\max_{[0,\, MM_2] }H''\right)\\[1mm]
 &\ge&-\frac{M}{(t+1)^{2k}}\left(kM_1+\frac {Mm^2_2}2\dd\max_{[0,\, MM_2]} H''\right)\ge0
 \eess
if $0<k\le\min\kk\{1, \; -\frac {Mm^2_2}{2M_1}\dd\max_{[0,\, MM_2]}H''\rr\}$.
Similarly, we can get that, for $t>0$ and $x\in[0,l]$,
 \[\bar{v}_t\ge d_2\int_0^lJ_2(x-y)\bar{v}(t,y)\dy-d_2j_2(x)\bar{v}-b\bar{v}+G(\bar{u})\]
when $0<k\le\min\kk\{1,\; -\frac{Mm^2_1}{2M_2}\dd\max_{[0,\,MM_1]}G''\rr\}$.
This completes the proof.
 \end{proof}

\section{Dynamics of \eqref{1.10}}\lbl{s4}

In this section, we investigate the dynamics of \eqref{1.10}. We first show spreading-vanishing dichotomy holds, and then discuss the criteria governing spreading and vanishing.

\subsection{Spreading-vanishing dichotomy and long time behaviors}

The following theorem shows that similar to \eqref{1.9} (see \cite[Theorem 1.1]{NV}), the dynamics of \eqref{1.10} also conforms to a spreading-vanishing dichotomy. Besides we prove that when vanishing happens, $(0,0)$ can be exponentially or algebraically asymptotically stable, depending on the sign of a related principal eigenvalue.

\begin{theorem}[Spreading-vanishing dichotomy]\label{t1.2} Let $(u,v,h)$ be the unique solution of \eqref{1.10}. Then one of the following alternatives must happen.\vspace{-1.5mm}
\begin{enumerate}[$(1)$]
\item \underline{Spreading {\rm(}necessarily $\mathcal{R}_0>1${\rm)}}: $ h_\yy:=\lim_{t\to\yy}h(t)=\yy$, $\lim_{t\to\yy}u(t,x)=U$ and $\lim_{t\to\yy}v(t,x)=V$ in $C_{\rm loc}([0,\yy))$, where $(U,V)$ is uniquely given by \eqref{1.2}.\vspace{-1.5mm}
\item \underline{Vanishing:} $h_{\yy}<\yy$, $\lambda_1(h_{\yy})\le0$ and $\lim_{t\to\yy}\|u(t,\cdot)+v(t,\cdot)\|_{C([0,h(t)])}=0$, where $\lambda_1(h_{\yy})$ is the principal eigenvalue of \eqref{2.18}. Moreover, \vspace{-1.5mm}
\begin{enumerate}
\item[$(1a)$] if $\lambda_1(h_{\yy})<0$, then $\lim_{t\to\yy}{\rm e}^{kt}\|u(t,\cdot)+v(t,\cdot)\|_{C([0,h(t)])}=0$ for any $k\in(0,-\lambda_1(h_{\yy}))$;
\item[$(1b)$] if $\lambda_1(h_{\yy})=0$, there exists a small $k_0>0$ such that $\lim_{t\to\yy}(1+t)^{k}\|u(t,\cdot)+v(t,\cdot)\|_{C([0,h(t)])}=0$ for any $k\in(0,k_0)$.\vspace{-1.5mm}
    \end{enumerate}
    \end{enumerate}
\end{theorem}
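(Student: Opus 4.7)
Since $h'(t) > 0$, the limit $h_\infty := \lim_{t\to\infty} h(t) \in (h_0, \infty]$ is well defined, and the split between vanishing ($h_\infty < \infty$) and spreading ($h_\infty = \infty$) is automatic. My approach is to treat each branch by a comparison with the fixed-boundary problem \eqref{2.29} on an appropriate $[0, l]$, combining the eigenvalue information from Proposition \ref{p2.2} with the long-time dynamics in Lemmas \ref{l2.4} and \ref{l2.5}. The cooperative structure of \eqref{1.10} makes all the required comparisons available through Lemma \ref{l2.3}-type arguments.

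\textbf{Vanishing branch.}
I first prove $\lambda_1(h_\infty) \le 0$ by contradiction. If $\lambda_1(h_\infty) > 0$, continuity and strict monotonicity of $l \mapsto \lambda_1(l)$ (Proposition \ref{p2.2}) produce $l_1 \in (h_0, h_\infty)$ with $\lambda_1(l_1) > 0$ and a time $T$ so that $h(t) > l_1$ for $t \ge T$. Since $\int_0^{h(t)} J_i \ge \int_0^{l_1} J_i$ on nonnegative integrands, $(u, v)|_{[0, l_1]}$ is a super-solution of \eqref{2.29} on $[0, l_1]$ from time $T$; Lemma \ref{l2.5}(1) then yields $\liminf_{t \to \infty} (u, v) \ge (\boldsymbol{u}_{l_1}, \boldsymbol{v}_{l_1}) > 0$, which forces $h'(t) \ge c_0 > 0$ for large $t$ and hence $h_\infty = \infty$, a contradiction. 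For the uniform decay, I extend $(u, v)$ by zero to $[0, h_\infty]$; this extension is a sub-solution of \eqref{2.29} on $[0, h_\infty]$, since enlarging the spatial integral from $[0, h(t)]$ to $[0, h_\infty]$ only increases the right-hand side. With $\lambda_1(h_\infty) \le 0$, Lemma \ref{l2.5}(2) gives the majorant $(\tilde u, \tilde v) \to 0$ in $X$, so $(u, v) \to 0$ uniformly in $[0, h(t)]$. The rate statements (1a)--(1b) are obtained by mimicking the upper-solution constructions in the proof of Lemma \ref{l2.5}: take $\bar u = M e^{-kt} \phi_1(x)$, $\bar v = M e^{-kt} \phi_2(x)$ with $(\phi_1, \phi_2)$ the positive eigenfunction of $\lambda_1(h_\infty)$ on $[0, h_\infty]$ and $k \in (0, -\lambda_1(h_\infty))$ for (1a); take $\bar u = M(t+1)^{-k} \phi_1$, $\bar v = M(t+1)^{-k}\phi_2$ with small $k > 0$ for (1b), using $H'', G'' < 0$. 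Since $h(t) \le h_\infty$ and $\phi_i > 0$, the comparison is valid on the free-boundary domain.

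\textbf{Spreading branch.}
To show spreading forces $\mathcal{R}_0 > 1$, suppose $\mathcal{R}_0 \le 1$, equivalently $\gamma_A \le 0$. Evaluating the equations of \eqref{1.10} at a spatial maximum point and using $\int_0^{h(t)} J_i(x-y)\phi(y)\, dy \le j_i(x)\,\max\phi$, the running maxima $M(t) := \max_{[0, h(t)]} u$ and $N(t) := \max_{[0, h(t)]} v$ are dominated by the ODE system $M' \le -aM + H(N)$, $N' \le -bN + G(M)$, whose zero solution is globally attractive when $\mathcal{R}_0 \le 1$ (cf.\ the discussion around \eqref{1.2}). Hence $M, N \to 0$, exponentially, so $M + N \in L^1(0, \infty)$. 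Splitting the free-boundary integral into $x \in [0, h(t) - K]$ and $x \in [h(t) - K, h(t)]$ and choosing $K$ large so that $\int_K^\infty(J_1 + J_2) < \delta$ yields
\[
h'(t) \le C\bigl(\delta h(t) + K\bigr)\bigl(M(t) + N(t)\bigr),
\]
and a Gronwall-type integration with $M+N$ integrable bounds $h(t)$ uniformly, contradicting $h_\infty = \infty$. Assuming now that spreading occurs (so $\mathcal{R}_0 > 1$), the upper bound $\limsup u \le U$, $\limsup v \le V$ follows from comparison with the spatially homogeneous ODE whose solution, starting above $(u_0, v_0)$, converges to $(U, V)$ under {\bf(H1)}. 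For the lower bound, since $\lambda_1(l) \nearrow \gamma_A > 0$, for each large $l^*$ one has $\lambda_1(l^*) > 0$; once $h(T) > l^*$, the super-solution argument of the vanishing branch combined with Lemma \ref{l2.5}(1) gives $\liminf(u, v) \ge (\boldsymbol{u}_{l^*}, \boldsymbol{v}_{l^*})$ on $[0, l^*]$, and letting $l^* \to \infty$ together with Lemma \ref{l2.4}(1) delivers $\liminf u \ge U$, $\liminf v \ge V$ locally uniformly in $[0, \infty)$.

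\textbf{Main obstacle.}
I expect the most delicate step to be the necessity $\mathcal{R}_0 > 1$ for spreading, because one must convert pointwise decay of $(u, v)$ from the ODE comparison into a quantitative bound on the flux $h'(t)$ across the moving front while $h(t)$ may a priori grow. The kernel-tail splitting together with exponential decay of $M, N$ makes this work, but requires care to ensure the Gronwall-type inequality keeps $h(t)$ bounded rather than merely slow-growing. All the other steps reduce to careful but routine applications of the cooperative comparison principle (Lemma \ref{l2.3}), the eigenvalue monotonicity and continuity (Proposition \ref{p2.2}), and the fixed-boundary dynamics (Lemmas \ref{l2.4} and \ref{l2.5}).
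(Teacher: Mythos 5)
Your vanishing branch and your treatment of the long-time behaviour when $h_\infty=\infty$ follow the paper's route essentially verbatim: the contradiction argument via $\lambda_1(h_\infty-\ep)>0$, comparison with the fixed-boundary problem \eqref{2.29}, the resulting uniform positive lower bound forcing $h'(t)\ge c_0>0$, and the decay rates obtained from the upper-solution constructions of Lemma \ref{l2.5}. Those parts are fine.

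The genuine gap is in your proof that spreading forces $\mathcal{R}_0>1$. You assert that when $\mathcal{R}_0\le 1$ the running maxima $M(t),N(t)$ decay \emph{exponentially}, hence $M+N\in L^1(0,\infty)$, and you then close the argument with a Gronwall bound on $h(t)$. Exponential decay fails precisely in the borderline case $\mathcal{R}_0=1$: there $\gamma_A=0$, the linearization of the comparison ODE $M'=-aM+H(N)$, $N'=-bN+G(M)$ at the origin has a zero eigenvalue, and the decay is only algebraic — the paper's own Lemma \ref{l2.5}(2b) exhibits the rate $(t+1)^{-k}$ with $k\le k_0<1$ under extra concavity hypotheses, and under the weaker {\bf(H1)} one cannot do better. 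Then $M+N\notin L^1(0,\infty)$, your inequality $h'(t)\le C(\delta h(t)+K)(M(t)+N(t))$ no longer yields a uniform bound on $h$, and the contradiction with $h_\infty=\infty$ is not reached. The paper sidesteps this entirely (Lemma \ref{l3.3}) with a Lyapunov-type integral identity: using $j_i(x)=\int_0^\infty J_i(x-y)\,dy$ one computes
\begin{equation*}
\frac{\rm d}{{\rm d}t}\int_0^{h(t)}\Big(u+\frac{H'(0)}{b}v\Big)\,\dx
<-\min\Big\{\frac{d_1}{\mu_1},\,\frac{H'(0)d_2}{b\mu_2}\Big\}\,h'(t),
\end{equation*}
where the reaction terms are nonpositive because $H(v)\le H'(0)v$ and $\frac{H'(0)}{b}G(u)\le \frac{H'(0)G'(0)}{b}u\le au$ whenever $\mathcal{R}_0\le1$; integrating in $t$ bounds $h_\infty$ directly, with no decay rate needed. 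To repair your argument you would either need to adopt this identity, or restrict your Gronwall route to $\mathcal{R}_0<1$ and supply a separate argument for $\mathcal{R}_0=1$.
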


Theorem \ref{t1.2} can be obtained by the following two lemmas.

\begin{lemma}\label{l3.1}If $h_{\yy}<\yy$, then $\lambda_1(h_{\yy})\le0$ and $\lim_{t\to\yy}\|u(t,x)+v(t,x)\|_{C([0,h(t)])}=0$. Moreover,
\vspace{-1.5mm}
 \begin{enumerate}[$(1)$]
 \item if $\lambda_1(h_{\yy})<0$, then $\lim_{t\to\yy}{\rm e}^{kt}\|u(t,x)+v(t,x)\|_{C([0,h(t)])}=0$ for all $0<k<-\lambda_1(h_{\yy})$;\vspace{-1.5mm}
 \item if $\lambda_1(h_{\yy})=0$, and $H, G\in C^2([0,\yy))$ and $H''(z)<0$, $G''(z)<0$ for $z\ge 0$, then there exists a $k_0\in(0,1]$ such that $\lim_{t\to\yy}(t+1)^k\|u(t,x)+v(t,x)\|_{C([0,h(t)])}=0$ for all $0<k\le k_0$.\vspace{-1.5mm}
     \end{enumerate}
\end{lemma}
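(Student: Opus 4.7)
My plan has three stages: first establish the sign condition $\lambda_1(h_\yy)\le0$ by a contradiction argument, second derive the uniform decay $u,v\to0$ via a single sub-/super-solution comparison that extends $(u,v)$ to the fixed interval $[0,h_\yy]$, and third read off the two decay rates from Lemma \ref{l2.5} applied to the same comparison.

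For the first stage, assume for contradiction that $\lambda_1(h_\yy)>0$. By continuity and strict monotonicity of $\lambda_1(\cdot)$ (Proposition \ref{p2.2}(1)), I can pick $l_0<h_\yy$ with $\lambda_1(l_0)>0$. Condition {\bf(J)} supplies $\de>0$ with $J_1(z)\ge J_1(0)/2$ for $|z|\le\de$; by continuity I may shrink $l_0$ toward $h_\yy$ so that $h_\yy-l_0<\de/4$. Since $h(t)\ge l_0$ for all large $t$ and since $u\ge0$, enlarging the nonlocal integral shows that the restriction of $(u,v)$ to $[0,l_0]$ is a supersolution of the fixed-boundary problem \eqref{2.29} with $l=l_0$. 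The comparison principle for this cooperative system, together with Lemma \ref{l2.5}(1), forces $(u,v)\ge(\boldsymbol{u}_{l_0}/2,\boldsymbol{v}_{l_0}/2)$ on $[0,l_0]$ for all sufficiently large $t$, so $u(t,x)\ge\sigma>0$ on $[0,l_0]$ for some $\sigma$. Feeding this into the free-boundary equation,
\[h'(t)\ge\mu_1\int_{h(t)-\de/2}^{l_0}u(t,x)\int_{h(t)}^{h(t)+\de/2}J_1(x-y)\dy\dx\ge \mu_1\cdot\sigma\cdot\frac{\de}{4}\cdot\frac{J_1(0)}{2}\cdot\frac{\de}{2}>0,\]
where the outer interval has length at least $\de/4$ precisely because $h(t)-l_0<h_\yy-l_0<\de/4$, and the inner integrand satisfies $|x-y|\le\de$. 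This uniform lower bound on $h'(t)$ contradicts $h_\yy<\yy$, so $\lambda_1(h_\yy)\le0$.

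For the second and third stages, let $(\tilde u,\tilde v)$ be the trivial extension of $(u,v)$ from $[0,h(t)]$ to $[0,h_\yy]$ by zero. A direct check gives a subsolution of \eqref{2.29} on $[0,h_\yy]$: on $[0,h(t)]$ the equation holds exactly because $\tilde u\equiv0$ on $(h(t),h_\yy]$ makes the extended nonlocal integral coincide with the original one; and on $(h(t),h_\yy]$ one has $\tilde u_t=0$ while the right-hand side is $d_1\int_0^{h(t)}J_1(x-y)u(t,y)\dy\ge0$, using $H(0)=0$. Comparing with the solution $(U,V)$ of \eqref{2.29} on $[0,h_\yy]$ that starts from the zero extension of $(u_0,v_0)$ yields $(u,v)\le(U,V)$ on $[0,h(t)]$ for all $t$. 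Lemma \ref{l2.5}(2), which applies because $\lambda_1(h_\yy)\le0$, gives $\|U(t,\cdot)+V(t,\cdot)\|_{C([0,h_\yy])}\to0$, whence $\|u(t,\cdot)+v(t,\cdot)\|_{C([0,h(t)])}\to0$. The rates follow from the same domination: part (2a) of Lemma \ref{l2.5} upgrades the convergence to $\mathrm{e}^{kt}(U,V)\to0$ in $X$ for every $k\in(0,-\lambda_1(h_\yy))$ when $\lambda_1(h_\yy)<0$, and part (2b) gives $(t+1)^k(U,V)\to0$ in $X$ for small $k>0$ when $\lambda_1(h_\yy)=0$ under the $C^2$ strict-concavity hypothesis on $H,G$; in each case the inequality $(u,v)\le(U,V)$ propagates the rate to $\|u+v\|_{C([0,h(t)])}$.

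The main obstacle is matching scales in the first stage: the pointwise lower bound for $u$ is available only on a fixed subinterval $[0,l_0]$ away from the moving boundary, while the free-boundary equation only sees values of $u$ within distance $\sim\de$ of $h(t)$, where $\de$ is the scale on which $J_1$ is bounded below by $J_1(0)/2$. The fix is to use the freedom in Proposition \ref{p2.2} to slide $l_0$ toward $h_\yy$ while retaining $\lambda_1(l_0)>0$, ensuring the overlap $[h(t)-\de/2,l_0]$ has definite positive length. All other steps are routine applications of the lemmas already proved in Sections \ref{s2}--\ref{s3}.
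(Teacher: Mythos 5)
Your proof is correct and follows essentially the same route as the paper's: for the sign condition, a contradiction argument via the fixed-boundary problem \eqref{2.29} on an interval slightly shorter than $h_\yy$ together with a uniform positive lower bound on $h'(t)$ obtained from the kernel near the origin; for the decay and its rates, domination of $(u,v)$ by a solution of \eqref{2.29} on $[0,h_\yy]$ combined with Lemma \ref{l2.5}. The only cosmetic differences are that you dominate $(u,v)$ by the solution started from the zero extension of the initial data rather than from the constants $\|u_0\|_{\yy},\|v_0\|_{\yy}$ as the paper does, and that you retain only the $\mu_1 J_1$-term in the free-boundary estimate.
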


\begin{proof}We first prove that if $h_{\yy}<\yy$, then $\lambda_1(h_{\yy})\le0$. Assume on the contrary that $\lambda_1(h_{\yy})>0$. By the continuity of $\lambda_1(l)$ in $l$, there exist small $\ep>0$ and $\delta>0$ such that $\lambda_1(h_{\yy}-\ep)>0$ and $\min\{J_1(x),J_2(x)\}\ge\delta$ for $|x|\le 2\ep$ due to the condition {\bf (J)}.  Moreover, there is $T>0$ such that $h(t)>h_{\yy}-\ep$ for $t\ge T$. Hence the solution component $(u,v)$ of \eqref{1.10} satisfies
\bess\left\{\begin{aligned}
&u_t\ge d_1\int_0^{h_{\yy}-\ep}J_1(x-y)u(y)\dy-d_1j_1(x)u-au+H(v), & & t>T, ~ ~ x\in[0,h_{\yy}-\ep]\\
&v_t\ge d_2\int_0^{h_{\yy}-\ep}J_2(x-y)v(y)\dy-d_2j_2(x)v-bv+G(u), & & t>T, ~ ~ x\in[0,h_{\yy}-\ep],\\
&u(T,x)>0, ~ ~v(T,x)>0, & & x\in[0,h_{\yy}-\ep].
 \end{aligned}\right.
 \eess
Let $(\underline{u},\underline{v})$ be the unique solution of \eqref{2.29} with $l=h_{\yy}-\ep$, $\tilde{u}_0(x)=u(T,x)$ and $\tilde{v}_0(x)=v(T,x)$. Note that $\lambda_1(h_{\yy}-\ep)>0$. Making use of Lemma \ref{l2.5} we have $(\underline{u},\underline{v})\to(\boldsymbol{u},\boldsymbol{v})$ in $X$ as $t\to\yy$, where $(\boldsymbol{u},\boldsymbol{v})$ is the unique positive solution of \eqref{2.17} with $l=h_{\yy}-\ep$. Furthermore, by comparison principle, $u(t+T,x)\ge \underline{u}(t,x)$ and $v(t+T,x)\ge \underline{v}(t,x)$ for $x\in[0,h_{\yy}-\ep]$. Therefore, $\liminf_{t\to\yy}(u(t,x),v(t,x))\ge(\boldsymbol{u},\boldsymbol{v})$ uniformly in $[0,h_{\yy}-\ep]$. There exist small $\sigma>0$ and large $T_1\gg T$ such that $u(t,x)\ge\sigma$ and $v(t,x)\ge\sigma$ for $t\ge T_1$ and $[0,h_{\yy}-\ep]$. In view of the equation of $h(t)$, we have, for $t>T_1$,
 \bess
 h'(t)\ge\sigma\int_{h_{\yy}-\frac{3\ep}2}^{h_{\yy}-\ep}\int_{h_{\yy}}^{h_{\yy}
 +\frac{\ep}2}\big[\mu_1J_1(x-y)+\mu_2J_2(x-y)\big]\dy\dx\ge(\mu_1+\mu_2)\delta\sigma,
 \eess
which clearly contradicts $h_{\yy}<\yy$. Thus $\lambda_1(h_{\yy})\le0$.

 Let $(\bar{u},\bar{v})$ be the solution of \eqref{2.21} with $l=h_{\yy}$, $\tilde{u}_0(x)=\|u_0\|_{C([0,h_0])}$ and $\tilde{v}_0(x)=\|v_0\|_{C([0,h_0])}$. Clearly, $\bar{u}(t,x)\ge u(t,x)$ and $\bar{v}\ge v(t,x)$ for $t\ge0$ and $x\in[0,h(t)]$. Note that $\lambda_1(h_{\yy})\le0$. Then the convergence results in this lemma follow from Lemma \ref{l2.5}. The proof is ended.
\end{proof}
The proof of the following result is standard, so the details are omitted.
\begin{lemma}\label{l3.2} If $h_{\yy}=\yy$ {\rm(}necessarily $\mathcal{R}_0>1$, see Lemma \ref{l3.3}{\rm)}, then $(u(t,x),v(t,x))\to(U,V)$ in $C_{\rm loc}([0,\yy))$ as $t\to\yy$.
\end{lemma}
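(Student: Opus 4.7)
The plan is to squeeze $(u(t,x),v(t,x))$ between a sub-solution and a super-solution, each tending to $(U,V)$ as $t\to\infty$, and conclude by the cooperative comparison principle. The hypothesis $h_{\yy}=\yy$ forces $\mathcal{R}_0>1$ (otherwise the associated principal eigenvalue stays negative and the same reasoning as in Lemma \ref{l3.1} forces $h_{\yy}<\yy$), which in turn guarantees $\gamma_A>0$ where $A$ has entries $a_{11}=-a$, $a_{12}=H'(0)$, $a_{21}=G'(0)$, $a_{22}=-b$.

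For the upper bound, I would exploit that spatially constant functions produce super-solutions for free: since $\int_0^{h(t)}J_i(x-y)\dy\le j_i(x)$, the nonlocal diffusion term applied to any $x$-independent function is non-positive. Hence, if $(\bar U(t),\bar V(t))$ solves the ODE system $\bar U'=-a\bar U+H(\bar V)$, $\bar V'=-b\bar V+G(\bar U)$ with $(\bar U(0),\bar V(0))=(M_1,M_2)\ge(u_0,v_0)$, then $(\bar U,\bar V)$ dominates $(u,v)$ pointwise by comparison (the free-boundary endpoint condition $u(t,h(t))=v(t,h(t))=0\le(\bar U,\bar V)$ is trivially satisfied). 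Under {\bf(H1)} with $\mathcal{R}_0>1$, a standard monotone-dynamics argument using the (strict) decrease of $H(z)/z$ and $G(z)/z$ gives global attractivity of the unique positive equilibrium $(U,V)$ for this ODE, so $(\bar U(t),\bar V(t))\to(U,V)$ and consequently $\limsup_{t\to\yy}(u(t,x),v(t,x))\le(U,V)$ uniformly on $[0,h(t)]$.

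For the lower bound, I would use $h_{\yy}=\yy$ together with the asymptotics from Section \ref{s2}. By Proposition \ref{p2.2}(2), $\lambda_1(L)\to\gamma_A>0$ as $L\to\yy$, and by Lemma \ref{l2.4}(1) the bounded positive steady state $(\boldsymbol{u}_L,\boldsymbol{v}_L)$ of \eqref{2.17} on $[0,L]$ satisfies $(\boldsymbol{u}_L,\boldsymbol{v}_L)\to(U,V)$ locally uniformly. Fix any large $L$ with $\lambda_1(L)>0$, and pick $T$ such that $h(t)>L$ for $t\ge T$; on $[T,\yy)\times[0,L]$ the extra nonnegative contribution $d_i\int_L^{h(t)}J_i(x-y)(u\text{ or }v)\dy$ can be discarded, giving a differential inequality whose companion fixed-boundary problem is precisely \eqref{2.29} on $[0,L]$. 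Letting $(\underline u,\underline v)$ solve \eqref{2.29} with initial datum $(u(T,\cdot),v(T,\cdot))\in X^{++}$, the cooperative comparison principle yields $(u(t+T,x),v(t+T,x))\ge(\underline u(t,x),\underline v(t,x))$ on $[0,L]$. Since $\lambda_1(L)>0$, Lemma \ref{l2.5}(1) gives $(\underline u,\underline v)\to(\boldsymbol{u}_L,\boldsymbol{v}_L)$ in $X$, so $\liminf_{t\to\yy}(u(t,x),v(t,x))\ge(\boldsymbol{u}_L,\boldsymbol{v}_L)(x)$ on $[0,L]$. Sending $L\to\yy$ and invoking Lemma \ref{l2.4}(1) yields $\liminf_{t\to\yy}(u,v)\ge(U,V)$ locally uniformly, which combined with the upper bound finishes the proof.

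The step that deserves the most care is the global attractivity of $(U,V)$ for the planar ODE system under the weaker assumption {\bf(H1)}: the original result in \cite{HY} was stated under {\bf(H)}, but the argument goes through because the strictly sublinear character of $G$ and sublinear character of $H$ are enough to classify the $\omega$-limit set of any positive trajectory by a Dulac-type or monotone-flow reasoning, ruling out nonconstant periodic orbits and attracting every positive trajectory to $(U,V)$. A secondary but routine point is the rigorous comparison on the moving domain, which follows because the truncated nonlocal operators $d_i\int_0^L J_i(x-y)\cdot\dy$ are order-preserving and the omitted integrals over $(L,h(t))$ are nonnegative.
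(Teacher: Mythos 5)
Your proof is correct and follows exactly the standard route the paper has in mind when it omits the details: an ODE super-solution (whose convergence to $(U,V)$ under {\bf(H1)} the paper itself invokes via phase-plane analysis in the proof of Lemma \ref{l4.1}) for the upper bound, and comparison with the fixed-boundary problem \eqref{2.29} on $[0,L]$ followed by Lemma \ref{l2.5}(1) and Lemma \ref{l2.4}(1) for the lower bound. No gaps.
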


\subsection{The criteria for spreading and vanishing}

We shall give a rather complete description of criteria for spreading and vanishing. From this result, one can learn some effect, brought by the cooperative behaviors of two agents $u$ and $v$, on spreading and vanishing. Define
  \[\mathcal{R}_*=\mathcal{R}_*(d_1, d_2):=\frac{H'(0)G'(0)}{(a+\frac{d_1}2)(b+\frac{d_2}2)}.\]
The main conclusion of this subsection is the following theorem.

\begin{theorem}[Criteria for spreading and vanishing]\label{t1.3} Let $\mathcal{R}_0$ be given by \eqref{x.1}, and $(u,v,h)$ be the unique solution of \eqref{1.10}. Then the following results hold.\vspace{-1.5mm}
\begin{enumerate}[$(1)$]
\item If $\mathcal{R}_0\le1$, then vanishing happens.\vspace{-1.5mm}
\item If $\mathcal{R}_*\ge1$, then spreading occurs.
  \vspace{-1.5mm}
\item Assume $\mathcal{R}_*<1<\mathcal{R}_0$ and fix all parameters but except for $h_0$ and $\mu_i$ for $i=1,2$. We can find a unique $\ell^*>0$ such that\vspace{-1.5mm}
\begin{enumerate}
\item[{\rm (3a)}] if $h_0\ge\ell^*$, then spreading happens;
\item[{\rm (3b)}] if $h_0<\ell^*$, then the following statements hold:
\begin{enumerate}\vspace{-1.5mm}
\item[{\rm (3b$_1$)}] there exists $\underline{\mu}>0$ such that vanishing happens when $\mu_1+\mu_2\le\underline{\mu}$; and there exists a $\bar{\mu}_1>0$ $(\bar{\mu}_2>0)$ which is independent of $\mu_2$\,$(\mu_1)$ such that spreading happens when  $\mu_1\ge\bar{\mu}_1$\,$(\mu_2\ge\bar{\mu}_2)$;
\item[{\rm (3b$_2$)}] if $\mu_2=f(\mu_1)$ where $f\in C([0,\yy))$, is strictly increasing, $f(0)=0$ and $\lim\limits_{s\to\yy}f(s)=\yy$, then there exists a unique $\mu^*_1>0$ such that spreading happens if and only if $\mu_1>\mu^*_1$. \vspace{-1.5mm}
    \end{enumerate}
    \end{enumerate}\vspace{-1.5mm}
\item Assume $\mathcal{R}_*<1<\mathcal{R}_0$ and fix all parameters but except for $d_i$ and $\mu_i$, $i=1,2$. \vspace{-1.5mm}
\begin{enumerate}
\item[{\rm (4a)}] Let $d_2=f(d_1)$ with $f$ having the properties as in {\rm (3b$_2$)}, and $\ud d_1>0$ be the unique root of $\mathcal{R}_*(d_1, f(d_1))=1$\, $(\mathcal{R}_*(d_1, f(d_1))<1$ is equivalent to $d_1>\ud d_1)$. Then there exists a unique ${d}^*_1>\ud d_1$ such that spreading happens if $\ud d_1<d_1\le {d}^*_1$; while if $d_1>{d}^*_1$, then whether spreading or vanishing happens depends on the expanding rates $\mu_1$ and $\mu_2$ as in {\rm (3b$_1$)}.
\item[{\rm (4b)}]
\begin{enumerate}
\item[{\rm(i)}] Fix $d_2<\Lambda:=2(H'(0)G'(0)-ab)/a$ and let $D_1=D_1(d_2)>0$ be the unique root of $\mathcal{R}_*(d_1, d_2)=1$\, $(\mathcal{R}_*(d_1, d_2)<1$ is equivalent to $d_1>D_1)$. Then there exists a unique $\hat{d}_1>D_1$ such that spreading happens if $D_1<d_1\le \hat{d}_1$, while if $d_1>\hat{d}_1$, then whether spreading or vanishing happens depends on the expanding rates $\mu_1$ and $\mu_2$ as in Lemmas \ref{l3.6} and \ref{l3.7};
\item[{\rm(ii)}] Let $\nu(d_2)$ be given by the following \qq{3.1} and $\ud{d}_2\ge\Lambda$ be the unique root of $\nu(d_2)=0$. If we fix $d_2\in[\Lambda, \ud{d}_2)$, then there exists a unique $\tilde d_1>0$ such that spreading happens when $d_1\le\tilde d_1$, while when $d_1>\tilde d_1$, whether spreading or vanishing happens depends on the expanding rates $\mu_1$ and $\mu_2$ as in {\rm (3b$_1$)};
\item[{\rm(iii)}] If we fix $d_2>\ud{d}_2$, then for all $d_1>0$, whether spreading or vanishing happens depends on the expanding rates $\mu_1$ and $\mu_2$ as in {\rm (3b$_1$)}.\vspace{-1.5mm}
\end{enumerate}\end{enumerate}
\end{enumerate}
\end{theorem}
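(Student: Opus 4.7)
\medskip

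\noindent\textbf{Proof proposal for Theorem \ref{t1.3}.}
The plan is to reduce every assertion to the sign of the principal eigenvalue $\lambda_1(l)$ of \qq{2.18}, together with the dichotomy result Theorem \ref{t1.2}, and to exploit the explicit formulas for $\gamma_A,\gamma_B$ in \qq{2.7}. Observe first that when applied to \qq{2.18} the matrices $A$ and $B$ give $\gamma_A>0\Leftrightarrow H'(0)G'(0)>ab\Leftrightarrow\mathcal R_0>1$ and $\gamma_B>0\Leftrightarrow H'(0)G'(0)>(a+d_1/2)(b+d_2/2)\Leftrightarrow\mathcal R_*>1$. By Proposition \ref{p2.2} the map $l\mapsto\lambda_1(l)$ is continuous and strictly increasing, with $\lambda_1(l)\to\gamma_A$ as $l\to\yy$ and $\lambda_1(l)\to\gamma_B$ as $l\to0$; in particular $\gamma_B<\lambda_1(l)<\gamma_A$ for every $l>0$.

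For part (1), $\mathcal R_0\le1$ means $\gamma_A\le0$, hence $\lambda_1(l)<0$ for all $l>0$, so a positive equilibrium $(U,V)$ cannot exist and Lemma \ref{l3.2} rules out $h_\yy=\yy$; the dichotomy forces vanishing. For part (2), $\mathcal R_*\ge1$ means $\gamma_B\ge0$, hence $\lambda_1(l)>0$ for all $l>0$; in particular $\lambda_1(h_\yy)>0$ whenever $h_\yy<\yy$, which contradicts Lemma \ref{l3.1}, so $h_\yy=\yy$ and spreading occurs.

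For part (3), the assumption $\mathcal R_*<1<\mathcal R_0$ gives $\gamma_B<0<\gamma_A$; by the intermediate value theorem applied to the strictly increasing continuous function $\lambda_1(\cdot)$ there is a unique $\ell^*>0$ with $\lambda_1(\ell^*)=0$. In case (3a) $h_0\ge\ell^*$ gives $h(t)>\ell^*$ for every $t>0$ (since $h'>0$), so $\lambda_1(h_\yy)>0$, Lemma \ref{l3.1} excludes vanishing, and spreading holds. In case (3b) with $h_0<\ell^*$ the strategy is to control $h_\yy$ via auxiliary upper/lower solutions. For the small-$\mu$ half of (3b$_1$) I would follow the standard scheme: pick $\ell_0\in(h_0,\ell^*)$ with $\lambda_1(\ell_0)<0$, use the positive eigenfunction of \qq{2.18} on $[0,\ell_0]$ to build an upper solution $(\bar u,\bar v,\bar h)$ of \qq{1.10} with $\bar h(t)\le\ell_0$ for all $t$, provided $\mu_1+\mu_2\le\ud\mu$ for $\ud\mu$ small enough; this forces $h_\yy\le\ell_0<\ell^*$, hence vanishing. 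For the large-$\mu$ half, by fixing one of $\mu_1,\mu_2$ and using Lemma \ref{l2.5} on a sub-interval one obtains a positive lower bound on $(u,v)$ on any compact subinterval of $[0,h(t))$; then the free boundary equation yields $h'(t)\ge c(\mu_i)$ on a time interval that drives $h$ past $\ell^*$, after which (3a) applies. For (3b$_2$) the monotone dependence of solutions on $\mu_1$ (and on $\mu_2=f(\mu_1)$) together with the two regimes already obtained yields the critical $\mu_1^*$ by a standard supremum argument.

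Part (4) follows the same blueprint once the $l$-monotonicity is replaced by the diffusion-monotonicity of Proposition \ref{p2.3}(3)--(4), invoked on $\lambda_2$ (which has the same sign as $\lambda_1$ by \qq{2.20}). Thresholds such as $\ud d_1$, $D_1$, $\ud d_2$ are produced by solving $\mathcal R_*=1$ or the limiting identity $\nu(d_2)=0$ coming from $\lim_{d_1\to\yy}\lambda_2=\zeta_1$; the critical values $d_1^*$, $\hat d_1$, $\tilde d_1$ are then the roots of $\lambda_1(h_0;d_1,\cdot)=0$, whose existence and uniqueness follow from continuity (Proposition \ref{p2.3}(1)), strict monotonicity in $d_1$ (Proposition \ref{p2.3}(3)), and the asymptotic limits (Proposition \ref{p2.3}(2),(4),(5)). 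Once these thresholds are in place, the spreading--vanishing behaviour in each regime is obtained as in part (3). The main obstacle I anticipate is the construction in (3b$_1$) and the careful verification that the upper/lower solutions in (4b)(i)--(iii) respect the correct ordering across the different ranges of $d_2$ relative to $\Lambda$ and $\ud d_2$; in particular, the case $d_2\in[\Lambda,\ud d_2)$ requires combining Proposition \ref{p2.3}(4) with the $l$-monotonicity to guarantee that $\lambda_1(h_0;d_1,d_2)$ crosses zero exactly once as $d_1$ varies, and this threshold analysis is where most of the technical work will lie.
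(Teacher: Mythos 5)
Your parts (2), (3) and (4) follow essentially the same route as the paper: reduce everything to the sign of $\lambda_1(l)$, use the monotonicity and limits from Propositions \ref{p2.2} and \ref{p2.3} (via $\lambda_2$ and \eqref{2.20} for the diffusion coefficients), and handle the intermediate regime $h_0<\ell^*$ by explicit upper solutions for small $\mu_1+\mu_2$ and a large-$\mu$ lower-solution argument together with monotonicity in $\mu_1$ for the sharp threshold. Those portions are sound in outline.

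Part (1), however, has a genuine gap. You argue that $\mathcal{R}_0\le1$ forces $\gamma_A\le0$, hence no positive equilibrium exists, and that ``Lemma \ref{l3.2} rules out $h_\infty=\infty$; the dichotomy forces vanishing.'' This is circular within the paper's logic and incomplete on its own terms: the parenthetical ``necessarily $\mathcal{R}_0>1$'' in Lemma \ref{l3.2} and in the dichotomy is itself justified by the very statement you are proving (the paper explicitly cites Lemma \ref{l3.3} for it), and the proof of Lemma \ref{l3.2} relies on comparison with positive steady states on growing finite domains, which only exist when $\lambda_1(l)>0$ for large $l$, i.e.\ when $\gamma_A>0$. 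Nonexistence of $(U,V)$ does not by itself contradict $h_\infty=\infty$; one must actually show that the boundary integral driving $h'$ is summable in time. A naive fix via comparison with the ODE system works when $\mathcal{R}_0<1$ (exponential decay of $(u,v)$ makes $h'\le Ch\,{\rm e}^{-\delta t}$ integrable), but fails at the borderline case $\mathcal{R}_0=1$, where the decay is at best algebraic of a non-integrable order. The paper avoids all of this with a direct Lyapunov-type identity: differentiating $\int_0^{h(t)}\bigl(u+\frac{H'(0)}{b}v\bigr)\dx$ and using $H(v)\le H'(0)v$, $G(u)<G'(0)u$ and $\mathcal{R}_0\le 1$ shows this quantity strictly decreases at a rate bounded below by a positive multiple of $h'(t)$, which yields the explicit bound \eqref{4.1a} on $h_\infty$. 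You need this (or an equivalent) ingredient; without it, assertion (1) — and hence the claim that spreading entails $\mathcal{R}_0>1$ — is not established.
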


The proof of Theorem \ref{t1.3} will be divided into several lemmas. We start with considering the case $\mathcal{R}_0=H'(0)G'(0)/(ab)\le 1$.

\begin{lemma}\label{l3.3} If $\mathcal{R}_0\le1$, then vanishing happens. Particularly,
 \bes
 h_{\yy}\le h_0+\frac 1{\min\kk\{d_1/\mu_1,\, H'(0)d_2/(b\mu_2)\rr\}}\int_0^{h_0}\left(u_0(x)
+\frac{H'(0)}{b}v_0(x)\right)\dx.\lbl{4.1a}\ees
\end{lemma}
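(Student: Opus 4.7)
The plan is to construct a one-parameter linear combination of integrals of $u$ and $v$ over the moving domain that serves as a Lyapunov-type functional. Specifically, fix $\alpha>0$ to be chosen later and set
\[
E(t)=\int_0^{h(t)}\bigl(u(t,x)+\alpha v(t,x)\bigr)\dx.
\]
Since $u(t,h(t))=v(t,h(t))=0$, the Leibniz formula kills the boundary term, leaving $E'(t)=\int_0^{h(t)}(u_t+\alpha v_t)\dx$. The hope is that for a judicious $\alpha$, the kinetic part becomes non-positive under $\mathcal{R}_0\le 1$, while the nonlocal diffusion part is exactly $-(d_1/\mu_1)h_1'(t)-\alpha(d_2/\mu_2)h_2'(t)$, where $h_i'(t)$ are the two pieces of the free boundary velocity associated with $u$ and $v$.

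The key algebraic step is to integrate the equations for $u$ and $v$ over $[0,h(t)]$ and apply Fubini together with the evenness of $J_i$. Using $j_i(y)=\int_0^\infty J_i(y-x)\dx$, one gets
\[
\int_0^{h(t)}\!d_1\!\!\int_0^{h(t)}\!J_1(x-y)u(t,y)\dy\dx-\int_0^{h(t)}\!d_1j_1(y)u(t,y)\dy
=-d_1\!\int_0^{h(t)}\!u(t,y)\!\int_{h(t)}^\infty\!J_1(x-y)\dx\dy,
\]
and, by a further renaming $x\leftrightarrow y$ and evenness, the right-hand side equals $-(d_1/\mu_1)h_1'(t)$, where $h_1'(t):=\mu_1\int_0^{h(t)}\int_{h(t)}^\infty J_1(x-y)u(t,x)\dy\dx$; analogously for $v$. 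Hence
\[
E'(t)=-\frac{d_1}{\mu_1}h_1'(t)-\alpha\frac{d_2}{\mu_2}h_2'(t)+\int_0^{h(t)}\!\bigl[(\alpha G(u)-au)+(H(v)-\alpha b v)\bigr]\dx.
\]

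Now choose $\alpha=H'(0)/b$, which is exactly the ratio that appears in the stated bound. By condition \textbf{(H1)}, $H(v)/v$ is decreasing, so $H(v)\le H'(0)v=\alpha b v$; similarly $G(u)\le G'(0)u$, so
\[
\alpha G(u)-au\le\bigl(\tfrac{H'(0)G'(0)}{b}-a\bigr)u=a(\mathcal{R}_0-1)u\le 0
\]
whenever $\mathcal{R}_0\le 1$. Both bracketed terms in the integrand are therefore non-positive, so with $m:=\min\{d_1/\mu_1,\,H'(0)d_2/(b\mu_2)\}$ we obtain $E'(t)\le -m\bigl(h_1'(t)+h_2'(t)\bigr)=-m\,h'(t)$. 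Integrating from $0$ to $t$ and discarding the non-negative $E(t)$ gives
\[
m\bigl(h(t)-h_0\bigr)\le E(0)=\int_0^{h_0}\!\Bigl(u_0(x)+\tfrac{H'(0)}{b}v_0(x)\Bigr)\dx,
\]
which yields \eqref{4.1a} upon passing to the limit $t\to\infty$, and in particular $h_\infty<\infty$, i.e.\ vanishing happens.

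The only subtle part is the Fubini/evenness bookkeeping that converts the double integral of the diffusion term into exactly $-(d_i/\mu_i)h_i'(t)$; everything else is a direct consequence of the Lyapunov computation and the sublinear bounds $H(v)\le H'(0)v$, $G(u)\le G'(0)u$ afforded by \textbf{(H1)}.
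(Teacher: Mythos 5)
Your proposal is correct and takes essentially the same route as the paper: both integrate the PDE over the moving domain, use Fubini plus evenness of $J_i$ to convert the nonlocal diffusion term into a multiple of the free-boundary speed, choose the same Lyapunov weight $\alpha=H'(0)/b$, and invoke the sublinearity bounds $H(v)\le H'(0)v$, $G(u)\le G'(0)u$ together with $\mathcal{R}_0\le 1$ to kill the reaction term. The only cosmetic difference is that you split $h'(t)=h_1'(t)+h_2'(t)$ before bounding, whereas the paper bounds the combined double integral directly by $\min\{d_1/\mu_1,\,H'(0)d_2/(b\mu_2)\}\,h'(t)$; the resulting estimate is identical.
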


\begin{proof} Firstly, in view of $j_i(x)=\int_0^\yy\!J_1(x-y)\dy$, it can be deduced that
 \bess
 \int_0^{h(t)}\!\!\int_0^{h(t)}\!J_1(x-y)u(t,y)\dy\dx-\int_0^{h(t)}\!j_1(x)u(t,x)\dx
&=&-\int_0^{h(t)}\!\!\int_{h(t)}^\yy\!J_1(x-y)u(t,x)\dx\dy,\\
\int_0^{h(t)}\!\!\int_0^{h(t)}\!J_2(x-y)v(t,y)\dy\dx-\int_0^{h(t)}\!j_2(x)v(t,x)\dx
&=&-\int_0^{h(t)}\!\!\int_{h(t)}^\yy\!J_2(x-y)v(t,x)\dx\dy.
 \eess
Then, by a series of simple computations, we have
\bess
 \frac{\rm d}{{\rm d}t}\!\int_0^{h(t)}\!\!\left(u+\frac{H'(0)}{b}v\right)\dx
&=&-\int_0^{h(t)}\!\!\!\int_{h(t)}^{\yy}\!\!\kk(d_1J_1(x-y)u+\frac{H'(0)d_2}{b}
J_2(x-y)v\!\rr)\!\dy\dx\\[1mm]
&&+\int_0^{h(t)}\kk(H(v)-au-H'(0)v+\frac{H'(0)}{b}G(u)\rr)\dx\\[1mm]
&<&-\min\kk\{d_1/\mu_1,\, H'(0)d_2/(b\mu_2)\rr\}h'(t).
\eess
Hence we derive
\[\frac{\rm d}{{\rm d}t}\int_0^{h(t)}\left(u+\frac{H'(0)}{b}v\right)\dx
<-\min\kk\{d_1/\mu_1,\, H'(0)d_2/(b\mu_2)\rr\}h'(t).\]
Integrating the above inequality from $0$ to $t$ yields \qq{4.1a}.
\end{proof}

The following involves the case $\mathcal{R}_0>1$. All arguments used below tightly depend on the fact that if vanishing happens, then $\lambda_1(h_{\yy})\le0$ as in Lemma \ref{l3.1}. Here we mention that, at our present situation, $a_{11}=-a, a_{22}=-b, a_{12}=H'(0)$ and $a_{21}=G'(0)$. Thus
 \bess
&&\gamma_A=\frac{-(a+b)+\sqrt{(a+b)^2+4\big[H'(0)G'(0)-ab\big]}}2>0,\\
&& \gamma_B=\frac{-(a+\frac{d_1}2+b+\frac{d_2}2)
+\sqrt{(a+\frac{d_1}2+b+\frac{d_2}2)^2+4\big[H'(0)G'(0)-(a+\frac{d_1}2)
 (b+\frac{d_2}2)\big]}}2. \eess
It is clear that $\mathcal{R}_*(d_1, d_2)\ge1$ if and only if $\gamma_B\ge0$.

\begin{lemma}\label{l3.4}If $\mathcal{R}_*\ge1$, then spreading occurs.
\end{lemma}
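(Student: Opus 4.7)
The plan is to translate the hypothesis $\mathcal{R}_*\ge 1$ into a sign condition on the limit quantity $\gamma_B$ appearing in \eqref{2.7}, and then use the dichotomy of Lemma \ref{l3.1} to rule out vanishing. Concretely, with $a_{11}=-a$, $a_{22}=-b$, $a_{12}=H'(0)$, $a_{21}=G'(0)$, a direct algebraic check shows that $\gamma_B\ge 0$ if and only if $H'(0)G'(0)\ge\bigl(a+\tfrac{d_1}{2}\bigr)\bigl(b+\tfrac{d_2}{2}\bigr)$, which is precisely $\mathcal{R}_*\ge 1$. So the hypothesis is equivalent to $\gamma_B\ge 0$.

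Next I would invoke Proposition \ref{p2.2}: the principal eigenvalue $\lambda_1(l)$ of the linearization \eqref{2.18} is continuous and strictly increasing in $l>0$ with $\lim_{l\to 0}\lambda_1(l)=\gamma_B$. Strict monotonicity then yields $\lambda_1(l)>\gamma_B\ge 0$ for every $l>0$. In particular, since $h_\infty\ge h_0>0$, we have $\lambda_1(h_\infty)>0$ whenever $h_\infty<\infty$.

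Finally I would argue by contradiction. Suppose vanishing occurs, i.e.\ $h_\infty<\infty$. Lemma \ref{l3.1} forces $\lambda_1(h_\infty)\le 0$, contradicting the strict positivity just established. Hence $h_\infty=\infty$, i.e.\ spreading happens. There is no real obstacle here beyond a careful bookkeeping check that the matrix entries $a_{ij}$ used to form $\gamma_B$ in \eqref{2.7} match the coefficients of \eqref{2.18}, and the verification that $\gamma_B\ge 0\iff\mathcal{R}_*\ge 1$; both are straightforward.
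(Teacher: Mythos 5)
Your proposal is correct and follows essentially the same route as the paper: identify $\mathcal{R}_*\ge 1$ with $\gamma_B\ge 0$ (the paper notes this equivalence just before the lemma), use Proposition \ref{p2.2} (strict monotonicity of $\lambda_1(l)$ together with $\lim_{l\to 0}\lambda_1(l)=\gamma_B$) to get $\lambda_1(l)>0$ for all $l>0$, and conclude via the contrapositive of Lemma \ref{l3.1}. No gaps.
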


\begin{proof} The condition $\mathcal{R}_*\ge1$ implies $\gamma_B\ge 0$.
Owing to Proposition \ref{p2.2}(3), $\lim_{l\to 0}\lambda_1(l)=\gamma_B$. Therefore, $\lambda_1(l)>\gamma_B\ge 0$ for all $l>0$. It then follows from Lemma \ref{l3.1} that spreading happens.
\end{proof}

In what follows, we focus on the case $\mathcal{R}_*<1<\mathcal{R}_0$.
We fix all the parameters in \eqref{1.10} but except for $h_0$ and $\mu_i$ with $i=1,2$, and discuss the effect of initial habitat $[0,h_0]$ on criteria of spreading and vanishing. Making use of Proposition \ref{p2.2}, we have $\lim_{l\to\yy}\lambda_1(l)=\gamma_A>0$, and $\lim_{l\to 0}\lambda_1(l)=\gamma_B <0$. By the monotonicity of $\lambda_1(l)$, there exists a unique $\ell^*>0$ such that $\lambda_1(\ell^*)=0$ and $\lambda_1(l)(l-\ell^*)>0$ for $l\neq\ell^*$. As $\lambda_1(l)$ is strictly increasing in $l>0$, as a consequence of  Lemma \ref{l3.1}, we have the following result.

\begin{lemma}\label{l3.5}Let $\ell^*$ be defined as above. If $h_0\ge\ell^*$, then spreading happens.
\end{lemma}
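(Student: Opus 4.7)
The plan is a short proof by contradiction, exploiting the strict monotonicity of $\lambda_1(l)$ established in Proposition \ref{p2.2} together with the vanishing criterion from Lemma \ref{l3.1}. I would first recall from the well-posedness discussion in the introduction that the unique solution $(u,v,h)$ of \eqref{1.10} satisfies $h'(t)>0$ for all $t\ge 0$, so $h(t)$ is strictly increasing on $[0,\yy)$. In particular, whenever $h_\yy<\yy$ one has $h_\yy > h(0)=h_0$, and whenever $h_\yy=\yy$ spreading occurs by definition.

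Now assume, contrary to the conclusion, that vanishing happens, i.e., $h_\yy<\yy$. By Lemma \ref{l3.1}, this forces
\[\lambda_1(h_\yy)\le 0.\]
On the other hand, the hypothesis $h_0\ge\ell^*$ together with $h'(t)>0$ for $t\ge 0$ yields $h_\yy>h_0\ge\ell^*$. Since $\lambda_1(l)$ is strictly increasing in $l>0$ by Proposition \ref{p2.2}(1) and $\lambda_1(\ell^*)=0$ by the definition of $\ell^*$, we conclude
\[\lambda_1(h_\yy)>\lambda_1(\ell^*)=0,\]
contradicting the previous inequality. Hence $h_\yy=\yy$, i.e., spreading happens.

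There is no real obstacle here: the entire argument leans on ingredients already prepared, namely the strict monotonicity of $h(t)$ from the global well-posedness discussion, the characterization of vanishing via $\lambda_1(h_\yy)\le 0$ proved in Lemma \ref{l3.1}, and the strict monotonicity of $\lambda_1$ in the interval length from Proposition \ref{p2.2}(1). The only subtle point worth highlighting is that even in the borderline case $h_0=\ell^*$ one gains a strict inequality $h_\yy>\ell^*$ thanks to $h'(0)>0$, which is what makes the contradiction with $\lambda_1(h_\yy)\le 0$ work uniformly.
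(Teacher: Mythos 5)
Your proof is correct and follows the same line as the paper: the paper presents Lemma \ref{l3.5} as an immediate consequence of the strict monotonicity of $\lambda_1(l)$ (Proposition \ref{p2.2}(1)), the fact $h'(t)>0$ (hence $h_\yy>h_0\ge\ell^*$), and the vanishing criterion $\lambda_1(h_\yy)\le 0$ from Lemma \ref{l3.1}. Your explicit handling of the borderline case $h_0=\ell^*$ via $h'(0)>0$ makes the same point the paper leaves implicit.
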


The next result shows that if $h_0<\ell^*$ and $\mu_1+\mu_2$ small enough, then vanishing occurs.

\begin{lemma}\label{l3.6}If $h_0<\ell^*$, then there exists a $\ud\mu>0$ such that vanishing happens if $\mu_1+\mu_2\le\ud\mu$.
\end{lemma}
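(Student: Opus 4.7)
The plan is to construct an explicit upper solution $(\bar u,\bar v,\bar h)$ whose free boundary stays bounded by a value strictly less than $\ell^*$, and then invoke a comparison principle to conclude $h_{\yy}\le\bar h_{\yy}<\yy$, which is precisely vanishing. The smallness of $\mu_1+\mu_2$ will enter only through the free-boundary inequality.

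First, since $\lambda_1(l)$ is continuous and strictly increasing in $l$ by Proposition \ref{p2.2}(1), the assumption $h_0<\ell^*$ allows me to fix $L\in(h_0,\ell^*)$ with $\lambda_1(L)<0$. Let $(\phi_1,\phi_2)\in X^{++}$ be the positive eigenfunction associated with $\lambda_1(L)$ on $[0,L]$, and pick $\beta\in(0,-\lambda_1(L))$ and $\delta>0$ small enough that $h_0(1+\delta)\le L$. Define
\[
\bar h(t)=h_0\bigl(1+\delta(1-{\rm e}^{-\beta t})\bigr),\qquad \bar u(t,x)=M{\rm e}^{-\beta t}\phi_1(x),\qquad \bar v(t,x)=M{\rm e}^{-\beta t}\phi_2(x),
\]
where $M$ is chosen large enough so that $M\phi_i(x)\ge\max\{u_0(x),v_0(x)\}$ on $[0,h_0]$; since $\phi_i>0$ on $[0,L]$, such $M$ depends only on the fixed initial data and on $\phi$.

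Next I would verify the differential inequalities for the upper solution on $0\le x\le\bar h(t)\le L$. Because $\bar h(t)\le L$ and $\phi_1\ge0$,
\[
d_1\int_0^{\bar h(t)}J_1(x-y)\bar u(t,y)\dy\le d_1\int_0^{L}J_1(x-y)M{\rm e}^{-\beta t}\phi_1(y)\dy,
\]
and using the sublinearity of $H$ furnished by {\bf(H1)} (the inequality $H(z)\le H'(0)z$ on $[0,\yy)$), the eigenvalue equation for $\phi$, and $\beta<-\lambda_1(L)$, I get
\[
\bar u_t-d_1\!\int_0^{\bar h(t)}\!J_1(x-y)\bar u(t,y)\dy+d_1j_1(x)\bar u+a\bar u-H(\bar v)\ge M{\rm e}^{-\beta t}\bigl(-\beta-\lambda_1(L)\bigr)\phi_1\ge0,
\]
and similarly for the $\bar v$-equation using $G(z)\le G'(0)z$. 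Together with $\bar u(0,x)\ge u_0$, $\bar v(0,x)\ge v_0$ and $\bar h(0)=h_0$, this takes care of everything except the Stefan-type condition.

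The key step, where smallness of $\mu_1+\mu_2$ enters, is the free-boundary inequality
\[
\bar h'(t)\ge \int_0^{\bar h(t)}\!\!\int_{\bar h(t)}^{\yy}\bigl[\mu_1 J_1(x-y)\bar u(t,x)+\mu_2J_2(x-y)\bar v(t,x)\bigr]\dy\dx.
\]
The left side equals $h_0\delta\beta{\rm e}^{-\beta t}$, and the right side is bounded by $M{\rm e}^{-\beta t}(\mu_1+\mu_2)L\max_i\|\phi_i\|_{C([0,L])}$. Hence it suffices to take
\[
\underline{\mu}:=\frac{h_0\delta\beta}{ML\max_i\|\phi_i\|_{C([0,L])}},
\]
which depends only on the fixed data (initial data, kernels, parameters, and the choices of $L$, $\delta$, $\beta$). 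Whenever $\mu_1+\mu_2\le\underline\mu$, $(\bar u,\bar v,\bar h)$ is a bona fide upper solution, so the comparison principle (proved by methods parallel to \cite{DN8,NV}) yields $h(t)\le\bar h(t)\le h_0(1+\delta)\le L<\ell^*$ for all $t\ge0$, which gives $h_{\yy}<\yy$ and hence vanishing by Theorem \ref{t1.2}.

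The main obstacle I expect is bookkeeping: one must choose $L,\delta,\beta,M$ in the correct order so that $\underline\mu$ ends up independent of $(\mu_1,\mu_2)$, and one must check that $\bar u,\bar v$ are indeed upper solutions of \eqref{1.10} (not of \eqref{2.29}) on the shrinking-by-comparison domain $[0,\bar h(t)]$; this relies on $\bar h(t)\le L$ so that integrals over $[0,\bar h(t)]$ can be dominated by integrals over $[0,L]$ where the eigenfunction identity is available. The sublinearity of $H$ and $G$ guaranteed by {\bf(H1)} is what lets me estimate $H(\bar v)$ and $G(\bar u)$ by their linearizations at zero; without this, the linear eigenvalue estimate would not close the argument.
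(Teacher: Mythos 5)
Your proposal is correct and follows essentially the same route as the paper: an exponentially decaying upper solution built from the positive eigenfunction of $\lambda_1$ on a slightly enlarged interval $[0,h_0(1+\ep)]$, with the free boundary $\bar h(t)=h_0[1+\ep(1-{\rm e}^{-\delta t})]$ and the smallness of $\mu_1+\mu_2$ entering only in the Stefan-type inequality. The only cosmetic differences are that the paper normalizes $\|\phi\|_X=1$ (so the constant $\underline\mu=\ep\delta h_0/(Mh_1)$ comes out without the $\max_i\|\phi_i\|$ factor) and takes the eigenfunction interval equal to $\lim_{t\to\yy}\bar h(t)$ rather than allowing a possibly larger $L$.
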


\begin{proof} Due to $h_0<\ell^*$, we have $\lambda_1(h_0)<0$. By the continuity of $\lambda_1(l)$ (Proposition \ref{p2.2}), there exists a small $\ep>0$ such that $\lambda_1(h_0(1+\ep))<0$. For convenience, denote $h_1=h_0(1+\ep)$. Let $\phi=(\phi_1,\phi_2)$ be the positive eigenfunction of $\lambda_1(h_0(1+\ep))$ with $\|\phi\|_{X}=1$. Define $\bar{h}(t)=h_0\big[1+\ep(1-{\rm e}^{-\delta t})\big]$, $\bar{u}(t,x)=M{\rm e}^{-\delta t}\phi_1$ and $\bar{v}=M{\rm e}^{-\delta t}\phi_2$
with $0<\delta\le -\lambda_1(h_1)$ and $M$ large enough such that $M\phi_1(x)\ge u_0(x)$ and $M\phi_2(x)\ge v_0(x)$ for $x\in[0,h_1]$.
Direct calculations yield that, for $t>0$ and $x\in[0,\bar{h}(t)]$,
 \bess
 &&\bar{u}_t-d_1\int_0^{\bar{h}(t)}\!\!J_1(x-y)\bar{u}(t,y)\dy
+d_1j_1(x)\bar{u}+a\bar{u}-H(\bar{v})\\
 &\ge& M{\rm e}^{-\delta t}\left(-\delta\phi_1-d_1\int_0^{h_1}J_1(x-y)\phi_1(y)\dy+d_1j_1(x)\phi_1+a\phi_1-\frac{H(\bar{v})}{M{\rm e}^{-\delta t}}\right)\\
 &=&M{\rm e}^{-\delta t}\left(-\delta\phi_1-\lambda_1(h_1)\phi_1+H'(0)\phi_2-\frac{H(M{\rm e}^{-\delta t}\phi_2)}{M{\rm e}^{-\delta t}}\right)\\
 &\ge& M{\rm e}^{-\delta t}\left(-\delta-\lambda_1(h_1)\right)\phi_1\ge 0.
\eess
Similarly, there holds:
\[\bar{v}_t-d_2\int_0^{\bar{h}}J_2(x-y)\bar{v}(t,y)\dy
+d_2j_2(x)\bar{v}+b\bar{v}-G(\bar{u})\ge0.\]
Moreover, when $\mu_1+\mu_2\le\frac{\ep\delta h_0}{Mh_1}$, we have
 \bess
&&\int_0^{\bar{h}(t)}\!\!\int_{\bar{h}(t)}^{\yy}\big[\mu_1J_1(x-y)\bar{u}(t,x)
+\mu_2J_2(x-y)\bar{v}(t,x)\big]\dy\dx\\
&=& M{\rm e}^{-\delta t}\sum_{i=1}^2\mu_i\int_0^{\bar{h}(t)}\!\!\int_{\bar{h}(t)}^{\yy}J_i(x-y)\phi_i(x)\dy\dx\le(\mu_1+\mu_2)Mh_1{\rm e}^{-\delta t}\le\ep\delta h_0{\rm e}^{-\delta t}=\bar{h}'(t).
  \eess

By the comparison principle, $h(t)\le\bar{h}(t)$ for $t\ge0$, which implies $\lim_{t\to\yy}h(t)<\yy$.
\end{proof}

\begin{lemma}\label{l3.7}If $h_0<\ell^*$, then there exists a $\bar{\mu}_1>0$\, $(\bar{\mu}_2>0)$ which is independent of $\mu_2$ $(\mu_1)$ such that spreading happens when $\mu_1\ge\bar{\mu}_1$ $(\mu_2\ge\bar{\mu}_2)$.
\end{lemma}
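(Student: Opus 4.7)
The plan is to argue by contradiction: assuming vanishing, Lemma \ref{l3.1} gives $\lambda_1(h_\yy)\le 0$, and the strict monotonicity of $l\mapsto\lambda_1(l)$ (Proposition \ref{p2.2}(1)) forces $h_\yy\le\ell^*$. I will then construct a lower-solution triple $(\underline u,\underline v,\underline h)$ of \eqref{1.10} with the key property that $\underline h(t)\nearrow\tilde l$ for some $\tilde l>\ell^*$ whenever $\mu_1$ is large enough (with the threshold $\bar\mu_1$ independent of $\mu_2$); the comparison principle then yields $h_\yy\ge\tilde l>\ell^*$, contradicting $h_\yy\le\ell^*$ and ruling out vanishing.

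To construct the sub-solution, fix $\tilde l>\ell^*$ so close to $\ell^*$ that $\lambda_1(\tilde l)>0$ (possible by Proposition \ref{p2.2}(1)), and let $(\phi_1,\phi_2)\in X^{++}$ be the positive principal eigenfunction of $\mathcal L$ on $[0,\tilde l]$. For small parameters $\rho\in(0,h_0)$, $\gamma>0$, $\delta>0$ to be chosen, set
\bess
\underline h(t):=(h_0-\rho)+(\tilde l-h_0+\rho)(1-{\rm e}^{-\gamma t}),\quad \underline u(t,x):=\delta\phi_1(x),\quad \underline v(t,x):=\delta\phi_2(x)\ \ \text{on}\ [0,\underline h(t)].
\eess
Then $\underline h(0)=h_0-\rho<h_0$, $\underline h(t)\nearrow\tilde l$, and the initial-data inequalities $\underline u(0,x)\le u_0(x)$, $\underline v(0,x)\le v_0(x)$ hold on $[0,h_0-\rho]$ for $\delta$ small (using positivity of $u_0,v_0$ on that compact set). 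To verify the $\underline u$-equation inequality I invoke the eigenvalue identity for $\phi_1$ on $[0,\tilde l]$ and split $\int_0^{\tilde l}=\int_0^{\underline h(t)}+\int_{\underline h(t)}^{\tilde l}$; the requirement reduces (after dividing by $\delta$) to
\bess
\lambda_1(\tilde l)\phi_1(x)+\frac{H(\delta\phi_2(x))-\delta H'(0)\phi_2(x)}{\delta}\;\ge\; d_1\!\int_{\underline h(t)}^{\tilde l}\!J_1(x-y)\phi_1(y)\,\dy,\quad x\in[0,\underline h(t)].
\eess
The second term on the left is $o(1)$ as $\delta\to 0$ by (H1), while $\lambda_1(\tilde l)\inf_{[0,\tilde l]}\phi_1>0$ dominates the uniformly bounded truncation loss as soon as $\rho$ is chosen small enough (so that $\tilde l-h_0+\rho$ is small). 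The analogous inequality for $\underline v$ is handled identically.

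For the free-boundary inequality I discard the nonnegative $\mu_2$ contribution and bound
\bess
\int_0^{\underline h(t)}\!\!\int_{\underline h(t)}^{\yy}\!\mu_1 J_1(x-y)\underline u(t,x)\,\dy\dx\;\ge\;\mu_1\delta c_*,
\eess
where $c_*>0$ arises from restricting to the slab $x\in[\underline h(t)-\ep,\underline h(t)]$, with $\ep$ chosen so that $\int_\ep^{\yy}J_1\ge 1/4$, so that $c_*$ depends only on $J_1,\phi_1,h_0,\tilde l$. Since $\underline h'(t)\le\gamma(\tilde l-h_0+\rho)$, the free-boundary inequality is ensured by taking $\bar\mu_1:=\gamma(\tilde l-h_0+\rho)/(\delta c_*)$, a threshold involving $\gamma,\delta,c_*,\tilde l,h_0,\rho$ but crucially not $\mu_2$. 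The comparison principle (proved along the lines of the upper-solution argument of Lemma \ref{l3.6}) then gives $h(t)\ge\underline h(t)$ for every $t\ge 0$, so $h_\yy\ge\tilde l>\ell^*$, the desired contradiction. The analogous threshold $\bar\mu_2$ is obtained by interchanging the roles of $(u,J_1,\mu_1)$ and $(v,J_2,\mu_2)$ in the free-boundary estimate. I anticipate that the main technical obstacle is the uniform-in-$t$ verification of the differential sub-solution inequalities, since the truncation-loss term $d_1\int_{\underline h(t)}^{\tilde l}J_1\phi_1\,\dy$ is of the same order as the gain $\delta\lambda_1(\tilde l)\phi_1$; the parameters $\delta,\rho,\tilde l$ must therefore be tuned carefully, but this is bookkeeping requiring no new ideas beyond those already present in the cooperative nonlocal free-boundary literature.
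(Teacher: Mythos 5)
Your construction contains a genuine gap in the interior differential inequalities, and it is not a matter of bookkeeping. You take $\tilde l>\ell^*$ with $\lambda_1(\tilde l)>0$ and claim the truncation loss $d_1\int_{\underline h(t)}^{\tilde l}J_1(x-y)\phi_1(y)\,\dy$ is dominated by $\lambda_1(\tilde l)\phi_1(x)$ ``as soon as $\rho$ is chosen small enough (so that $\tilde l-h_0+\rho$ is small)''. But $\tilde l-h_0+\rho\ge \ell^*-h_0>0$ is bounded below by a fixed constant, since by hypothesis $h_0<\ell^*<\tilde l$; no choice of $\rho$ makes it small. At $t=0$ and for $x$ near $\underline h(0)=h_0-\rho$, the truncation loss is of order $d_1\,\|J_1\|_\infty\min\{\ell^*-h_0,\,\cdot\}\min\phi_1>0$, independent of $\delta$ (which scales out) and of $\gamma$, while the gain $\lambda_1(\tilde l)\phi_1(x)$ tends to $0$ as $\tilde l\downarrow\ell^*$ and in general never dominates $d_1$ times an order-one fraction of the mass of $J_1$. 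So the sub-solution inequality for $\underline u$ fails on a nontrivial set. There is also a structural reason the construction cannot be repaired as written: $\underline u,\underline v$ are time-independent, so $\mu_1$ and $\gamma$ enter \emph{only} through the free-boundary inequality $\underline h'(t)\le(\text{flux})$. If the interior inequalities held for all $t$, you could send $\gamma\to0$, making your threshold $\bar\mu_1=\gamma(\tilde l-h_0+\rho)/(\delta c_*)$ arbitrarily small, and conclude spreading for every $\mu_1>0$ — contradicting Lemma \ref{l3.6}, which guarantees vanishing for $\mu_1+\mu_2$ small. Any valid proof must use largeness of $\mu_1$ in a way that affects the evolution, not just the boundary inequality of a static profile.

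The paper's argument is different and avoids this entirely. It first sets $\mu_2=0$: the resulting solution $(\underline u,\underline v,\underline h)$ is, by comparison, a lower solution of \eqref{1.10}, which is what makes the threshold $\bar\mu_1$ independent of $\mu_2$ (your device of simply dropping the $\mu_2$-flux accomplishes the same thing, and that part of your plan is fine). It then invokes the dynamic argument of \cite[Theorem 1.3]{DN8}: over a fixed time interval one has a positive lower bound for $(u,v)$ on $[0,h_0]$ that is independent of $\mu_1$, so as long as $h(t)\le\ell^*$ the free-boundary ODE gives $h'(t)\ge\mu_1\epsilon_0 c_0$ with $\epsilon_0,c_0$ independent of $\mu_1$; integrating shows $h$ must exceed $\ell^*$ in finite time once $\mu_1$ is large, after which Lemma \ref{l3.1} (via $\lambda_1(h_\yy)>\lambda_1(\ell^*)=0$) forces spreading. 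If you want to keep a sub-/super-solution flavor, you would need a genuinely time-dependent lower solution whose profile grows from the initial data and whose support is pushed past $\ell^*$ by the large-$\mu_1$ boundary motion; the stationary eigenfunction profile on $[0,\tilde l]$ cannot serve this purpose.
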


\begin{proof}We only prove the assertion about $\mu_1$ since the similar method can be adopt for the conclusion of $\mu_2$.
Let $(\underline{u},\underline{v},\underline{h})$ be the unique solution of \eqref{1.10} with $\mu_2=0$. Clearly, $(\underline{u},\underline{v},\underline{h})$ is an lower solution of \eqref{1.10} and Lemmas \ref{l3.1}-\ref{l3.6} hold for $(\underline{u},\underline{v},\underline{h})$. Then we can argue as in the proof of \cite[Theorem 1.3]{DN8} to deduce that there exists a $\ol{\mu}_1>0$ such that if $\mu_1\ge\ol{\mu}_1$, spreading happens for $(\underline{u},\underline{v},\underline{h})$ and also for the unique solution $(u,v,h)$ of \eqref{1.10}. The proof is finished.
\end{proof}

By Lemmas \ref{l3.6} and \ref{l3.7}, we have that vanishing occurs if $\mu_1+\mu_2\le \ud{\mu}$, while spreading happens if $\mu_1+\mu_2\ge\ol{\mu}_1+\ol{\mu}_2=:\ol\mu$. One naturally wonders whether these is a unique critical value of $\mu_1+\mu_2$ such that spreading happens if and only if $\mu_1+\mu_2$ is beyond this critical value. Indeed, such value does not exist since the unique solution $(u,v,h)$ of \eqref{1.10} is not monotone about $\mu_1+\mu_2$. However, for some special $(\mu_1, \mu_2)$ we can obtain a unique critical value as we wanted.

\begin{lemma}\label{l3.8} Assume $h_0<\ell^*$. If $\mu_2=f(\mu_1)$ where $f\in C([0,\yy))$, is strictly increasing, $f(0)=0$ and $\lim\limits_{s\to\yy}f(s)=\yy$. Then there is a unique $\mu^*_1>0$ such that spreading occurs if and only if $\mu_1>\mu^*_1$.
\end{lemma}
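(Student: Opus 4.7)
The plan is to treat $\mu_1$ as the only bifurcation parameter (with $\mu_2=f(\mu_1)$), show that the long-time behavior of \eqref{1.10} is monotone in $\mu_1$, and then isolate the sharp threshold by means of Theorem \ref{t1.2} and the definition of $\ell^*$.

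First I would prove a monotone comparison in the expansion rates: if $\mu_i^a\le\mu_i^b$ for $i=1,2$ and the solutions of \eqref{1.10} starting from common initial data are $(u^a,v^a,h^a)$ and $(u^b,v^b,h^b)$, then $h^a(t)\le h^b(t)$, and $u^a\le u^b$, $v^a\le v^b$ on the common domain for all $t\ge0$. This follows by observing that $(u^b,v^b,h^b)$ is an upper solution for the problem with rates $(\mu_1^a,\mu_2^a)$: the interior equations are unchanged and the Stefan-type inequality
\[(h^b)'(t)=\int_0^{h^b}\!\int_{h^b}^\yy(\mu_1^b J_1 u^b+\mu_2^b J_2 v^b)\dy\dx\ge\int_0^{h^b}\!\int_{h^b}^\yy(\mu_1^a J_1 u^b+\mu_2^a J_2 v^b)\dy\dx\]
is immediate. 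Because $f$ is strictly increasing, the map $\mu_1\mapsto(\mu_1,f(\mu_1))$ is strictly increasing in both coordinates, so $h_\yy(\mu_1):=\lim_{t\to\yy}h(t;\mu_1)$ is nondecreasing in $\mu_1$.

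Next, let $\Sigma_v:=\{\mu_1>0:h_\yy(\mu_1)<\yy\}$. Lemma \ref{l3.6} combined with $\mu_1+f(\mu_1)\to 0$ as $\mu_1\to 0^+$ gives $\Sigma_v\supseteq(0,\eta)$ for some $\eta>0$, while Lemma \ref{l3.7} gives $\Sigma_v\subseteq(0,\bar\mu_1)$. By the monotonicity of $h_\yy(\mu_1)$, $\Sigma_v$ is downward closed, hence an interval, and $\mu_1^*:=\sup\Sigma_v\in[\eta,\bar\mu_1]$ is well defined; by definition spreading occurs for every $\mu_1>\mu_1^*$. It remains to show $\mu_1^*\in\Sigma_v$. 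Suppose not: spreading occurs at $\mu_1^*$, so there is $T_0>0$ with $h(T_0;\mu_1^*)>\ell^*+1$, where $\ell^*$ is the unique positive root of $\lambda_1(l)=0$ introduced before Lemma \ref{l3.5}. Using the continuous dependence of the solution of \eqref{1.10} on $(\mu_1,\mu_2)$ over the compact interval $[0,T_0]$ (a routine consequence of the fixed-point construction used for well-posedness), there is $\delta>0$ with $h(T_0;\mu_1)>\ell^*$ for $|\mu_1-\mu_1^*|<\delta$. For any $\mu_1\in(\mu_1^*-\delta,\mu_1^*)\subset\Sigma_v$, Theorem \ref{t1.2}(2) forces $\lambda_1(h_\yy(\mu_1))\le0$, so $h_\yy(\mu_1)\le\ell^*$ by the strict monotonicity of $\lambda_1(l)$ in Proposition \ref{p2.2}(1); this contradicts $h_\yy(\mu_1)\ge h(T_0;\mu_1)>\ell^*$. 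Hence $\Sigma_v=(0,\mu_1^*]$ and spreading happens if and only if $\mu_1>\mu_1^*$.

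The main technical obstacle is the monotone comparison step together with the continuous dependence used at the end: both are standard in spirit, but require carefully adapting the usual comparison and fixed-point arguments to the coupled cooperative nonlocal free-boundary system \eqref{1.10}. Once these are in place, the remainder is the sharp-threshold argument above.
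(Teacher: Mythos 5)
Your proposal is correct and follows essentially the same route as the paper: monotonicity of $(u,v,h)$ in $\mu_1$ via comparison, the bracketing supplied by Lemmas \ref{l3.6} and \ref{l3.7}, and then the sharp-threshold argument, which the paper simply delegates to the proof of \cite[Theorem 3.14]{CDLL} and which you have written out explicitly (the sup of the vanishing set, continuous dependence on $(\mu_1,\mu_2)$ over finite time, and the bound $h_\yy\le\ell^*$ forced by $\lambda_1(h_\yy)\le0$). No gaps; your write-up just makes visible the details the paper cites rather than reproves.
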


\begin{proof} Firstly, it is easy to see from a comparison argument that the unique solution $(u,v,h)$ is strictly increasing in $\mu_1$. We have known that vanishing happens when $\mu_1+f(\mu_1)\le\ud{\mu}$ (Lemma \ref{l3.6}), and spreading happens when $\mu_1+f(\mu_1)\ge\ol{\mu}$ (Lemma \ref{l3.7}). Due to the properties of $f$, there exist unique $\ud\mu_1$ and $\ol\mu_1>0$,  such that $\ud\mu_1+f(\ud\mu_1)=\ud{\mu}$ and $\ol\mu_1+f(\ol\mu_1)=\ol{\mu}$. Clearly, $\mu_1+f(\mu_1)\le\ud{\mu}$ is equivalent to $\mu_1\le\ud{\mu}_1$, and $\mu_1+f(\mu_1)\ge\ol{\mu}$ is equivalent to $\mu_1\ge\ol{\mu}_1$. So, vanishing happens if $\mu_1\le\ud{\mu}_1$, while spreading occurs if $\mu_1\ge\ol{\mu}_1$. Then we can use the monotonicity of $(u,v,h)$ on $\mu_1$ and argue as in the proof of \cite[Theorem 3.14]{CDLL} to finish the proof. The details are omitted here.
\end{proof}

Next, to investigate the effect of $d_i$ on spreading and vanishing, we fix all the parameters but except for $d_i$ and $\mu_i$ with $i=1,2$. Assume that $d_2=f(d_1)$ with $f$ being given as in Lemma \ref{l3.8}. Then we try to obtain a critical value for $d_1$ governing spreading and vanishing.

Clearly, $\mathcal{R}_*(d_1, f(d_1))$ is strict decreasing in $d_1$. There exists a unique $\ud d_1>0$ such that $\mathcal{R}_*(\ud d_1, f(\ud d_1))=1$, and $[\mathcal{R}_*(d_1, f(d_1))-1](d_1-\ud d_1)<0$ for $d\not=\ud d_1$.

\begin{lemma}\label{l3.9}Suppose that $d_2=f(d_1)$ and $d_1>\ud d_1$. Then there exists a unique ${d}^*_1>\ud d_1$ such that spreading happens if $d_1\le {d}^*_1$, while if $d_1>{d}^*_1$, then whether spreading or vanishing happens depends on the expanding rates $\mu_1$ and $\mu_2$ as in Lemmas \ref{l3.6} and \ref{l3.7}.
\end{lemma}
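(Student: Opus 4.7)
The plan is to reduce the dependence on $d_1$ to tracking the sign of a single scalar function. Since the self-adjoint eigenvalue problem \eqref{2.19} has principal eigenvalue $\lambda_2(l)$ that is strictly decreasing in each of its two diffusion coefficients $d_1/H'(0)$ and $d_2/G'(0)$ (Proposition \ref{p2.3}(3)), and since by \eqref{2.20} the signs of $\lambda_1(l)$ and $\lambda_2(l)$ coincide, I would study the auxiliary function
$$\Lambda(d_1):=\lambda_2\bigl(h_0;\,d_1,\,f(d_1)\bigr),\qquad d_1>0,$$
and read off spreading/vanishing information for \eqref{1.10} from the sign of $\Lambda$.

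First I would gather the three properties of $\Lambda$ needed. Continuity on $(0,\infty)$ follows from Proposition \ref{p2.3}(1). Strict monotonicity (decreasing) in $d_1$ follows from Proposition \ref{p2.3}(3) combined with the strict monotonicity of $f$, because increasing $d_1$ also increases $d_2=f(d_1)$. For the limits: as $d_1\to 0^+$ one has $f(d_1)\to 0$, so Proposition \ref{p2.3}(2) gives $\Lambda(d_1)\to\gamma_A'$, where $\gamma_A'$ is the Perron value of the $2\times 2$ coefficient matrix of \eqref{2.19}; the explicit formula in \eqref{2.7} reduces to $\gamma_A'=\tfrac12\bigl[-a/H'(0)-b/G'(0)+\sqrt{(a/H'(0)-b/G'(0))^2+4}\bigr]$, which is strictly positive precisely when $H'(0)G'(0)>ab$, i.e.\ $\mathcal{R}_0>1$. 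And as $d_1\to\infty$, Proposition \ref{p2.3}(5) yields $\Lambda(d_1)\to-\infty$. The intermediate value theorem together with strict monotonicity then produces a unique $d_1^*\in(0,\infty)$ with $\Lambda(d_1^*)=0$.

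Next I would show $d_1^*>\ud d_1$. At $d_1=\ud d_1$, the identity $\mathcal{R}_*(\ud d_1,f(\ud d_1))=1$ is equivalent to $\gamma_B=0$ for the original problem \eqref{2.18}. Proposition \ref{p2.2}(3) then gives $\lim_{l\to 0}\lambda_1(l)=\gamma_B=0$, and the strict monotonicity in $l$ from Proposition \ref{p2.2}(1) forces $\lambda_1(h_0)>0$; by \eqref{2.20}, $\Lambda(\ud d_1)>0$, and so $d_1^*>\ud d_1$. With $d_1^*$ in hand, I would split on $d_1$. For $\ud d_1<d_1<d_1^*$ one has $\lambda_1(h_0)>0$, and Proposition \ref{p2.2}(1) combined with $h_\infty\ge h_0$ yields $\lambda_1(h_\infty)>0$, ruling out vanishing by Lemma \ref{l3.1}; hence spreading. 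At the critical value $d_1=d_1^*$, $\lambda_1(h_0)=0$; if vanishing occurred, Lemma \ref{l3.1} would force $\lambda_1(h_\infty)\le 0$, whereas the strict increase $h'(t)>0$ (recorded in the well-posedness paragraph of the introduction) gives $h_\infty>h_0$, so $\lambda_1(h_\infty)>\lambda_1(h_0)=0$, a contradiction; hence spreading again. For $d_1>d_1^*$, $\lambda_1(h_0)<0$ places us in exactly the setting used in the proofs of Lemmas \ref{l3.6} and \ref{l3.7}, and those arguments apply verbatim to produce the small-$\mu_1+\mu_2$ vanishing threshold and the large-$\mu_i$ spreading thresholds.

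The main obstacle I anticipate is largely bookkeeping. One has to pivot between the two eigenvalue problems \eqref{2.18} and \eqref{2.19}, using \eqref{2.19} (self-adjoint, hence Proposition \ref{p2.3}(3) applies) to get monotonicity in $d_1$, but using \eqref{2.18} at $d_1=\ud d_1$ because $\ud d_1$ is defined through $\gamma_B$ of the original matrix. One must also confirm that the constructions in Lemmas \ref{l3.6}-\ref{l3.7} genuinely rely only on the sign condition $\lambda_1(h_0)<0$ (equivalently $h_0<\ell^*(d_1)$) rather than on the specific $\ell^*$ built in the previous subsection, so that they remain applicable as $d_1$ varies above $d_1^*$.
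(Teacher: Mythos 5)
Your proposal is correct and follows essentially the same route as the paper's proof: use the self-adjoint problem \eqref{2.19} and Proposition \ref{p2.3}(3) (with $f$ strictly increasing) for strict monotonicity in $d_1$, verify positivity at $\ud d_1$ via $\gamma_B(\ud d_1)=0$ and the strict monotonicity of $\lambda_1(l)$ in $l$, obtain negativity for large $d_1$ from Proposition \ref{p2.3}(5), and transfer signs back to $\lambda_1$ through \eqref{2.20} before invoking Lemma \ref{l3.1} and the arguments of Lemmas \ref{l3.6}--\ref{l3.7}. The only cosmetic difference is your extra computation of the limit as $d_1\to0^+$, which the paper bypasses since $\Lambda(\ud d_1)>0$ already suffices.
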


\begin{proof}
For clarity, we rewrite $\gamma_B$ defined by \qq{2.7} as $\gamma_B(d_1)$, and the principal eigenvalues of \qq{2.18} and \qq{2.19} as $\lambda_1(l, d_1)$ and $\lambda_2(l, d_1)$, respectively. Then $\lambda_2(h_0, d_1)$ is strictly decreasing in $d_1>0$ by Proposition \ref{p2.3}(3).

Note that $\mathcal{R}_*(d_1, f(d_1))\ge1$ is equivalent to $\gamma_B(d_1)\ge0$. Therefore, $0=\gamma_B(\ud d_1)=\dd\lim_{l\to 0}\lambda_1(l, \ud d_1)$, and then $\lambda_1(l, \ud d_1)>0$ for all $l>0$ by the monotonicity of $\lambda_1(l)$. Thanks to \qq{2.20}, $\lambda_2(l, \ud d_1)>0$ for all $l>0$. Certainly, $\lambda_2(h_0, \ud d_1)>0$. Moreover, by Proposition \ref{p2.3}(5), $\lambda_2(h_0, d_1)<0$ when $d_1$ is large. The monotonicity of $\lambda_2(h_0, d_1)$ indicates that there is a unique ${d}^*_1>\ud d_1$ such that $\lambda_2(h_0, d^*_1)=0$ and $\lambda_2(h_0, d_1)(d_1-{d}^*_1)<0$ when $d_1>\ud d_1$ and $d_1\not={d}^*_1$. Recalling \qq{2.20}, it follows that if $\ud{d}_1<d_1\le d^*_1$, then $\lambda_1(h_0, d_1)\ge0$ and spreading happens by Lemma \ref{l3.1}; if $d_1>d^*_1$, then $\lambda_1(h_0, d_1)<0$ and similar to the arguments in the proofs of Lemmas \ref{l3.6} and \ref{l3.7}, we can get the desired results. The proof is finished.
\end{proof}

Next we consider the case where one diffusion coefficient is fixed and the other one varies. Since the situations are parallel, we only study the case where $d_2$ is fixed and $d_1$ is varying. Notice that $\mathcal{R}_*(d_1, d_2)<1<\mathcal{R}_0$. Define $\Lambda=2(H'(0)G'(0)-ab)/a$.
If $d_2<\Lambda$, then $\mathcal{R}_*(0, d_2)>1$. The condition $\mathcal{R}_*(d_1, d_2)<1$ is equivalent to $d_1>D_1$, where $D_1>0$ is the unique root of $\mathcal{R}_*(d_1, d_2)=1$. This leads to the following conclusion. The proof is ignored since it is similar to that of Lemma \ref{l3.9}.

\begin{lemma}\label{l3.9a} Fix $d_2<\Lambda$ and let $d_1>D_1$. Then there exists a unique $\hat{d}_1>D_1$ such that spreading happens if $d_1\le \hat{d}_1$, while if $d_1>\hat{d}_1$, then whether spreading or vanishing happens depends on the expanding rates $\mu_1$ and $\mu_2$ as in Lemmas \ref{l3.6} and \ref{l3.7}.
\end{lemma}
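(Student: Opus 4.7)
The plan is to mirror the argument of Lemma \ref{l3.9}, this time monitoring how $\lambda_2(h_0, d_1)$ (with $d_2$ held fixed) varies as the single parameter $d_1$ changes, and extracting $\hat d_1$ from its zero crossing. Since the cross coefficients in \eqref{2.19} are both $1$, the associated operator is self-adjoint in the sense of Proposition \ref{p2.1}(4), so Proposition \ref{p2.3}(3) yields that $\lambda_2(h_0, d_1)$ is continuous and strictly decreasing in $d_1>0$; and by the sign equivalence \eqref{2.20}, $\lambda_1(h_0, d_1)$ has the same sign as $\lambda_2(h_0, d_1)$, so their zero-crossing points coincide.

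I would first pin down the endpoint behavior of $\lambda_2(h_0, d_1)$. At $d_1 = D_1$ the identity $\mathcal{R}_*(D_1, d_2) = 1$ is equivalent to $\gamma_B(D_1) = 0$; Proposition \ref{p2.2} then gives $\lambda_1(l, D_1) > \gamma_B(D_1) = 0$ for every $l>0$, and \eqref{2.20} forces $\lambda_2(h_0, D_1) > 0$. As $d_1 \to \infty$ with $d_2$ fixed, recasting \eqref{2.19} in the framework of \eqref{2.1} (with effective diffusion coefficients $d_1/H'(0)$, $d_2/G'(0)$ and diagonal entries $-a/H'(0)$, $-b/G'(0)$) and invoking Proposition \ref{p2.3}(4) yields $\lambda_2(h_0, d_1) \to \zeta_1$, where $G'(0)\zeta_1$ is the principal eigenvalue of
\[d_2\int_0^{h_0} J_2(x-y)\omega(y)\dy - d_2 j_2(x)\omega - b\omega = \xi\omega.\]
By \cite[Lemma 2.6]{LLW} the principal eigenvalue of the same operator with the $-b\omega$ term dropped is strictly negative, so the constant shift by $-b$ forces $\xi < -b < 0$ and hence $\zeta_1 < 0$. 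Continuity and strict monotonicity then produce a unique $\hat d_1 \in (D_1, \infty)$ with $\lambda_2(h_0, \hat d_1) = 0$, so via \eqref{2.20}, $\lambda_1(h_0, d_1)$ is positive on $(D_1, \hat d_1)$, vanishes at $\hat d_1$, and is negative on $(\hat d_1, \infty)$.

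The two conclusions of the lemma now follow quickly. For $D_1 < d_1 < \hat d_1$, $\lambda_1(h_0, d_1) > 0$; combined with strict monotonicity of $\lambda_1(\cdot, d_1)$ in $l$ and $h(t) \ge h_0$, any finite $h_\yy$ would give $\lambda_1(h_\yy, d_1) > 0$, contradicting Lemma \ref{l3.1}, so spreading holds. At $d_1 = \hat d_1$, $\lambda_1(h_0, \hat d_1) = 0$ and $h'(t)>0$ force $h(t) > h_0$ for every $t>0$, so any finite $h_\yy > h_0$ would still give $\lambda_1(h_\yy, \hat d_1) > 0$ by strict monotonicity, again a contradiction; so spreading still holds. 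For $d_1 > \hat d_1$, $\lambda_1(h_0, d_1) < 0$, so a unique $\ell^*(d_1) > h_0$ with $\lambda_1(\ell^*(d_1), d_1) = 0$ exists, and the upper-solution construction of Lemma \ref{l3.6} together with the lower-solution comparison of Lemma \ref{l3.7} apply without change to yield vanishing for small $\mu_1 + \mu_2$ and spreading for large $\mu_1$ (or large $\mu_2$).

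The main obstacle is the asymptotic identification $\lambda_2(h_0, d_1) \to \zeta_1 < 0$ as $d_1 \to \infty$: one must carefully unwind the $H'(0), G'(0)$ rescaling in \eqref{2.19} to bring it into the hypothesis of Proposition \ref{p2.3}(4), and then appeal to \cite[Lemma 2.6]{LLW} to pin down the sign. Once this limit is in hand together with $\lambda_2(h_0, D_1) > 0$, the remainder is a routine continuity/monotonicity/comparison argument parallel to Lemma \ref{l3.9}.
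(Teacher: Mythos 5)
Your proof is correct and follows the route the paper intends (the paper omits the proof of Lemma \ref{l3.9a}, declaring it ``similar to that of Lemma \ref{l3.9}''): show $\lambda_2(h_0,D_1)>0$ from $\gamma_B(D_1)=0$ via Proposition \ref{p2.2} and \eqref{2.20}, show $\lambda_2(h_0,d_1)<0$ for large $d_1$, and conclude by continuity and strict monotonicity of $\lambda_2(h_0,\cdot)$ together with Lemmas \ref{l3.1}, \ref{l3.6} and \ref{l3.7}. You also correctly repair the one step that does not transfer verbatim from Lemma \ref{l3.9}: with $d_2$ fixed, Proposition \ref{p2.3}(5) is unavailable, so the negativity of $\lambda_2(h_0,d_1)$ for large $d_1$ must come from Proposition \ref{p2.3}(4) after rescaling \eqref{2.19} into the form \eqref{2.1}, with the sign of the limiting scalar eigenvalue pinned down by \cite[Lemma 2.6]{LLW} --- exactly the argument the paper itself uses in the paragraph preceding Lemma \ref{l3.10}.
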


Next we deal with the case $d_2\ge \Lambda$. In view of Proposition \ref{p2.3}, we have $\lambda_1(h_0, d_1)<0$ when $d_1$ is large enough,  and thus $\lambda_2(h_0, d_1)<0$. Using the continuity of $\lambda_1(h_0, d_1)$ in $d_1\ge 0$ (Proposition \ref{p2.3}(1)),  we get $\lambda_1(h_0, d_1)\to\nu_1(d_2)$ as $d_1\to0$, where $\nu_1(d_2)$ is the principal eigenvalue of
 \bess\left\{\begin{aligned}
&-a\phi_1+H'(0)\phi_2=\nu\phi_1, & &x\in[0,h_0],\\[1mm]
&d_2\int_0^{h_0}J_2(x-y)\phi_2(y)\dy-d_2j_2(x)\phi_2+G'(0)\phi_1-b\phi_2=\nu\phi_2,& &x\in[0,h_0].
 \end{aligned}\right.
 \eess
 Let $\kappa_1$ be the principal eigenvalue of
 \[\int_0^{h_0}J_2(x-y)\omega(y)\dy-j_2(x)\omega(x)=\kappa\omega(x) ~ ~ {\rm for ~ }x\in[0,h_0].\]
Then $-1/2<\kappa_1<0$ (cf. \cite[Lemma 2.6]{LLW}). The simple calculations yield
 \bes
 \nu_1(d_2)=\frac{-(a+b-d_2\kappa_1)+\sqrt{(a+b-d_2\kappa_1)^2
  -4\big[a(b-d_2\kappa_1)-H'(0)G'(0)\big]}}2.
 \lbl{3.1}\ees
Clearly, $\nu_1(d_2)$ is strictly decreasing in $d_2$ and $\nu_1(\Lambda)>0$. Since
$\nu_1(d_2)\to-a$ as $d_2\to\yy$, there exists a unique $\ud{d}_2>\Lambda$ such that $\nu_1(\ud{d}_2)=0$ and $\nu_1(d_2)(d_2-\ud{d}_2)<0$ for $\Lambda<d_2\neq \ud{d}_2$.

\begin{lemma}\label{l3.10}The following statements are valid.\vspace{-1.5mm}
\begin{enumerate}[$(1)$]
\item If we fix $d_2\in[\Lambda,\, \ud{d}_2)$, then there exists a unique $\tilde{d}_1>0$ such that spreading happens if $d_1\le\tilde{d}_1$, while whether spreading or vanishing happens depends on the expanding rates $\mu_1$ and $\mu_2$ as in Lemmas \ref{l3.6} and \ref{l3.7} if $d_1>\tilde{d}_1$.\vspace{-1.5mm}
\item If we fix $d_2\ge \ud{d}_2$, then for all $d_1>0$, whether spreading or vanishing happens depends on the expanding rates $\mu_1$ and $\mu_2$ as in Lemmas \ref{l3.6} and \ref{l3.7}.\vspace{-1.5mm}
\end{enumerate}
 \end{lemma}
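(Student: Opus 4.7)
The plan is to reduce both statements to a sign analysis of $\lambda_1(h_0,d_1)$ as a function of $d_1\ge0$, in the same spirit as the proofs of Lemmas~\ref{l3.9} and~\ref{l3.9a}. The ingredients I will use are: continuity of $\lambda_p$ in $(d_1,d_2)$ (Proposition~\ref{p2.3}(1)); strict monotonicity of $\lambda_2(h_0,d_1)$ in $d_1$ via Proposition~\ref{p2.3}(3) applied to the symmetric problem~\qq{2.19}; the sign equivalence~\qq{2.20}; the limit $\lambda_1(h_0,d_1)\to\nu_1(d_2)$ as $d_1\to0^+$, already computed in the paragraph preceding the lemma; and the limit $\lambda_1(h_0,d_1)\to\zeta_1$ as $d_1\to\yy$ supplied by Proposition~\ref{p2.3}(4), where $\zeta_1$ is the principal eigenvalue of $d_2\int_0^{h_0}J_2(x-y)\omega(y)\dy-d_2j_2(x)\omega-b\omega=\zeta\omega$. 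By \cite[Lemma 2.6]{LLW} this eigenvalue equals $d_2\kappa_2-b$ with $\kappa_2\in(-1/2,0)$, so $\zeta_1<0$ for every $d_2>0$.

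For part (1), the hypothesis $d_2\in[\Lambda,\ud d_2)$ yields $\nu_1(d_2)>0$ by the monotonicity of $\nu_1$ recorded just before the statement. Combined with $\zeta_1<0$, the limit analysis shows that $\lambda_2(h_0,d_1)$ is positive at $d_1=0$, negative for large $d_1$, and strictly decreasing and continuous in $d_1$; hence there is a unique $\tilde d_1>0$ with $\lambda_2(h_0,\tilde d_1)=0$, and by~\qq{2.20} the same $\tilde d_1$ is characterized by $\lambda_1(h_0,\tilde d_1)=0$, together with $\lambda_1(h_0,d_1)\ge0$ iff $d_1\le\tilde d_1$. When $d_1\le\tilde d_1$, spreading follows as in Lemma~\ref{l3.9}: if $h_\yy<\yy$, then Proposition~\ref{p2.2}(1) and the strict monotonicity of $h$ would give $\lambda_1(h_\yy,d_1)>\lambda_1(h_0,d_1)\ge0$, contradicting Lemma~\ref{l3.1}. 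When $d_1>\tilde d_1$, $\lambda_1(h_0,d_1)<0$ means $h_0<\ell^*(d_1)$, and Lemmas~\ref{l3.6} and~\ref{l3.7} apply verbatim to yield the $\mu_i$-dichotomy.

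For part (2), the hypothesis $d_2\ge\ud d_2$ gives $\nu_1(d_2)\le0$, hence $\lambda_1(h_0,0)\le0$ and therefore $\lambda_2(h_0,0)\le0$ by~\qq{2.20}. Strict monotonicity of $\lambda_2(h_0,\cdot)$ then forces $\lambda_2(h_0,d_1)<0$, and hence $\lambda_1(h_0,d_1)<0$, for every $d_1>0$. Consequently $h_0<\ell^*(d_1)$ for all $d_1>0$, and Lemmas~\ref{l3.6} and~\ref{l3.7} again give the $\mu_i$-dichotomy.

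The one delicate point is that $\lambda_1$ itself is not known to be monotone in $d_1$, since Proposition~\ref{p2.3}(3) requires the symmetric structure $a_{12}=a_{21}$, which fails for~\qq{2.18}. The workaround is to track the sign of $\lambda_1$ through $\lambda_2$ using~\qq{2.20}, relying on the two explicit limits above to pin down the unique sign-change point. Beyond this, no new obstacle is expected; identifying $\zeta_1=d_2\kappa_2-b$ and checking $\zeta_1<0$ is immediate from \cite[Lemma 2.6]{LLW}.
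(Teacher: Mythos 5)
Your proof is correct and follows essentially the same route as the paper: both track the sign of $\lambda_1(h_0,\cdot)$ through the self-adjoint surrogate $\lambda_2(h_0,\cdot)$ via \eqref{2.20}, use $\nu_1(d_2)$ and the large-$d_1$ limit $\zeta_1<0$ to pin down the endpoint behaviours, and then invoke Lemmas \ref{l3.1}, \ref{l3.6}, and \ref{l3.7}. The only cosmetic difference is in part (2): the paper argues by contradiction in order to avoid appealing to strict monotonicity of $\lambda_2(h_0,\cdot)$ at the closed endpoint $d_1=0$ (Proposition \ref{p2.3}(3) is stated only for $d_1>0$), whereas you use $\lambda_2(h_0,0)\le0$ together with strict monotonicity directly; that step is harmless — the inequality in the proof of Proposition \ref{p2.3}(3) remains strict when $\bar d_1=0$, or one can fall back on the paper's contradiction — but it is worth noting explicitly if you want the argument to be self-contained.
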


\begin{proof} (1) Since $d_2\in[\Lambda,\, \ud{d}_2)$, we have $\nu_1(d_2)>0$. Recalling that $\lambda_1(h_0, d_1)$ and $\lambda_2(h_0, d_1)$ have the same sign. From the above discussion we see that $\lambda_2(h_0, d_1)>0$ if $d_1\ll 1$, while $\lambda_2(h_0, d_1)<0$ if $d_1\gg 1$. By the monotonicity of $\lambda_2(h_0, d_1)$ on $d_1$, then there exists a unique $\tilde d_1>0$ such that $\lambda_2(h_0, d_1)=0$ if $d_1=\tilde d_1$ and $\lambda_2(h_0, d_1)(d_1-\tilde d_1)<0$ if $d_1\neq \tilde d_1$. Similar to the proof of Lemma \ref{l3.9}, the first assertion is proved.

(2) If $d_2\ge \ud{d}_2$, then $\nu_1(d_2)\le0$. We claim that $\lambda_1(h_0, d_1)<0$ for all $d_1>0$. Otherwise, then $\lambda_1(h_0, d_1)\ge0$ for some $d_1>0$,  and so $\lambda_2(h_0, d_1)\ge0$. By the monotonicity of $\lambda_2(h_0, d_1)$ in $d_1$, $\lambda_2(h_0, d_1)>0$ when $d_1\ll 1$, which leads to $\lambda_1(h_0, d_1)>0$ when $d_1\ll 1$. Thus, $\nu_1(d_2)=\dd\lim_{d_1\to0}\lambda_1(h_0, d_1)\ge 0$. Recall $\nu_1(d_2)\le0$. So $\lim_{d_1\to0}\lambda_1(h_0, d_1)=0$, which implies $\lim_{d_1\to0}\lambda_2(h_0, d_1)=0$. However, as $\lambda_2(h_0, d_1)>0$ for $0<d_1\ll 1$, it follows from the monotonicity that $\lim_{d_1\to0}\lambda_2(h_0, d_1)>0$. This is a contradiction.

Therefore, $\lambda_1(h_0, d_1)<0$ for all $d_1>0$. Then by arguing as in the proof of Lemma \ref{l3.9}, we can complete the proof whose details are ignored. Therefore the proof is finished.
 \end{proof}

Theorem \ref{t1.3}(1) and (2) are exactly Lemmas \ref{l3.3}-\ref{l3.4}, respectively; Theorem \ref{t1.3}($3a$) is exactly Lemma \ref{l3.5}; Theorem \ref{t1.3}($3b$) follows from Lemmas \ref{l3.6}-\ref{l3.8}; Theorem \ref{t1.3}(4) follows from Lemmas \ref{l3.9}-\ref{l3.10}.

\section{Spreading speed}\lbl{s5}
\setcounter{equation}{0} {\setlength\arraycolsep{2pt}

In this section, we investigate the spreading speed of \eqref{1.10}, and thus always assume that spreading occurs for \eqref{1.10} which implies $\mathcal{R}_0>1$. It will be seen that accelerated spreading (infinite spreading speed) can occur if $J_1$ and $J_2$ violate the following condition:
\begin{enumerate}
\item[{\bf(J1)}] $\int_0^{\yy}xJ_i(x)\dx<\yy$\, for $i=1,2$.
 \end{enumerate}
However, the rate of accelerated spreading, a hot topic in spreading phenomenon modelled by nonlocal diffusion equation, is not discussed here and left to future work. We note that for \eqref{1.7}, by virtue of some subtle upper and lower solutions, Du and Ni \cite{DN3,DN4,DN5} obtained some sharp estimates on the rate of accelerated spreading for a class of algebraic decay kernels.

Before stating the conclusion of this section, we first consider the following semi-wave problem
 \bes\left\{\begin{aligned}\label{4.1}
&d_1\int_{-\yy}^0\!J_1(x-y)p(y)\dy-d_1p+cp'-ap+H(q)=0, & & x\in(-\yy,0),\\
&d_2\int_{-\yy}^0\!J_2(x-y)q(y)\dy-d_2q+cq'-bq+G(p)=0, & & x\in(-\yy,0),\\
&p(-\yy)=U, ~ ~ q(-\yy)=V, ~ ~p(0)=q(0)=0,\\
&c=\int_{-\yy}^0\int_0^{\yy}\!\big[\mu_1J_1(x-y)p(x)
+\mu_2J_2(x-y)q(x)\big]\dy\dx.
 \end{aligned}\right.
 \ees

\begin{proposition}{\rm (\cite[Theorem 1.2]{DN2})} \label{p4.1}Problem \eqref{4.1} has a unique solution triplet $(\tilde c,\tilde p,\tilde q)$ with $\tilde c>0$ and $\tilde p, \tilde q$ strictly decreasing in $(-\yy,0]$ if and only if {\bf (J1)} holds.
\end{proposition}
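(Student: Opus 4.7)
I would prove the two directions of the equivalence separately, treating $c>0$ as a free parameter for an auxiliary profile problem and then closing the loop through a shooting/fixed-point argument. First, for the necessity of \textbf{(J1)}, suppose a triplet $(c,p,q)$ with $c>0$ and $p,q$ monotone decreasing exists. Using evenness of $J_i$, Fubini, and the substitution $s=-x$,
\[\int_{-\yy}^0\!\int_0^{\yy}\!J_1(x-y)p(x)\dy\dx
=\int_0^{\yy}\!p(-s)\!\int_s^{\yy}\!J_1(z)\,{\rm d}z\,{\rm d}s.\]
Since $p(-s)\to U>0$ and $q(-s)\to V>0$ as $s\to\yy$, another application of Fubini shows that the right-hand side is finite if and only if $\int_0^\yy sJ_1(s)\dx<\yy$, and similarly for the $J_2$ term. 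Hence $c$ is finite if and only if \textbf{(J1)} holds.

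For the sufficiency under \textbf{(J1)}, the plan is to fix $c$ and construct a monotone profile $(p_c,q_c)$ of the first three equations in \eqref{4.1}, then select $c$ so that the speed identity is satisfied. For fixed $c>0$, I would first solve the problem on a bounded interval $(-L,0)$ with boundary data $p=U,q=V$ at $x=-L$ and $p=q=0$ at $x=0$, via monotone iteration between the constant super-solution $(U,V)$ and a strictly positive sub-solution built from the principal eigenfunction of the associated linearized eigenvalue problem (whose positivity of the principal eigenvalue on large $L$ is guaranteed by Proposition \ref{p2.2}(2), since $\gamma_A>0$ when $\mathcal{R}_0>1$). The cooperative monotonicity of $(H,G)$ and the comparison Lemma \ref{l2.3} give uniqueness and monotonicity of each iterate. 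Passing $L\to\yy$ using uniform bounds, strict monotonicity in $x$, and the dominated convergence theorem yields a solution $(p_c,q_c)$ on $(-\yy,0]$; the boundary behavior $p_c(-\yy)=U$, $q_c(-\yy)=V$ follows by identifying the limits with the unique positive steady state given by Lemma \ref{l2.4}.

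Finally, I would define
\[\Gamma(c):=\int_{-\yy}^0\!\int_0^{\yy}\!\big[\mu_1J_1(x-y)p_c(x)+\mu_2J_2(x-y)q_c(x)\big]\dy\dx,\]
which is finite by the identity in the first paragraph and \textbf{(J1)}. Showing that $(p_c,q_c)$ depends continuously and monotonically on $c$ (sliding/comparison), I would then verify $\Gamma(c)>c$ for $c\to 0^+$ (using that $(p_0,q_0)\equiv(U,V)$ on $(-\yy,0)$ in a limiting sense) and $\Gamma(c)<c$ for $c$ near the critical upper bound where the profile collapses, so the intermediate value theorem produces $\tilde c$ with $\Gamma(\tilde c)=\tilde c$; uniqueness of the triplet follows from a sliding argument combined with Lemma \ref{l2.3}. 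The main obstacle I expect is Step 3: proving strict monotonicity of the profile in $c$ (which is delicate for cooperative semi-wave systems because the usual shift argument must be controlled near the singular limit $x\to-\yy$), and identifying the correct upper bound on admissible $c$ through the linearized dispersion relation at the equilibrium $(U,V)$. The construction of a sub-solution on the truncated interval that respects both components of the cooperative system is also nontrivial and will likely require a vector-valued ansatz aligned with the eigenfunction from Proposition \ref{p2.1}.
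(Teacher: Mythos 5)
The paper does not prove Proposition \ref{p4.1}: it is cited verbatim from \cite[Theorem~1.2]{DN2}, so there is no in-paper argument to compare against. Evaluating your sketch on its own merits: the necessity direction is sound and essentially standard. The Fubini computation turning $\int_{-\infty}^0\int_0^\infty J_i(x-y)\,\cdot\,\dy\dx$ into a first-moment integral, combined with the fact that the profiles are bounded between a positive constant and $U$ (resp.\ $V$) on $(-\infty,-s_0]$, does show that finiteness of $c$ forces $\int_0^\infty sJ_i(s)\,{\rm d}s<\infty$ for $i=1,2$. That is exactly the argument used in \cite{DN2,DLZ}.

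The sufficiency direction, however, has a genuine structural gap. The profile equations contain the first-order drift $cp'$, so the truncated problem on $(-L,0)$ cannot accommodate Dirichlet data at \emph{both} endpoints: the equation is first order in the derivative, and in fact setting $c=0$ and $x=0$ in the first equation immediately forces $\int_{-\infty}^0J_1(-y)p(y)\,\dy=0$, which is impossible for a positive profile. Hence there is no $c=0$ semi-wave at all, and the assertion that $(p_0,q_0)\equiv(U,V)$ ``in a limiting sense'' must be replaced by a careful $c\to0^+$ analysis of a boundary layer whose width collapses; what survives is only the weaker (but sufficient) statement $\liminf_{c\to0^+}\Gamma(c)>0$. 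The actual construction in \cite{DN2} does not truncate the interval: for fixed $c$ in a range $(0,c^*)$ (with $c^*$ the minimal traveling-wave speed of the full-line problem, identified via the linearization at $(0,0)$, not at $(U,V)$ as you suggest), one treats the profile equations as a first-order ODE system for $(p,q)$ with the nonlocal convolutions as ``frozen'' source terms, shoots from the single boundary condition $p(0)=q(0)=0$, and runs a monotone iteration directly on the half-line. Monotonicity and continuity of $c\mapsto(p_c,q_c)$, the intermediate value argument for $\Gamma(\tilde c)=\tilde c$, and the uniqueness of the fixed point then follow as you outline, but the iteration scheme and the identification of the admissible $c$-range are the heart of the matter, and the two-sided Dirichlet truncation you propose will not produce them.
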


The following is our main conclusion of this section.

\begin{theorem}[Spreading speed]\label{t1.4} Let $(u,v,h)$ be the unique solution of \eqref{1.10} and spreading happen. Then the following statements are valid.
 \begin{enumerate}[$(1)$]
 \item If $({\bf J1})$ is satisfied, then
 \[\dd\lim_{t\to\yy}\frac{h(t)}{t}=\tilde c, ~ \dd\lim_{t\to\yy}\max_{[0,ct]}\big(|u(t,x)-U|+|v(t,x)-V|\big)=0 \; ~ {\rm for ~ any ~ } c\in[0,\tilde c),\]
 and for any $\tau \in(0,1)$,
  \bess
 \lim_{t\to\yy}\frac{\min\{x>0:u(t,x)=\tau U\}}{t}=\lim_{t\to\yy}\frac{\min\{x>0:v(t,x)=\tau V\}}{t}=\tilde c,
  \eess
 where $(U,V)$ is determined by \eqref{1.2} and $\tilde c$ is uniquely given by semi-wave problem \eqref{4.1}.
 \item If $({\bf J1})$ is violated, then
  \[\lim_{t\to\yy}\frac{h(t)}{t}=\yy, ~ \dd\lim_{t\to\yy}\max_{[0,ct]}\big(|u(t,x)-U+|v(t,x)-V|\big)=0\; ~ {\rm for ~ any ~ } c\in[0,\yy),\]
 and for any $\tau\in(0,1)$,
  \bess
 \lim_{t\to\yy}\frac{\min\{x>0:u(t,x)=\tau U\}}{t}=\yy,\;\;\;
   \lim_{t\to\yy}\frac{\min\{x>0:v(t,x)=\tau V\}}{t}=\yy.
 \eess
 \end{enumerate}
\end{theorem}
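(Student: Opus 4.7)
The plan is to reduce everything to the semi-wave triplet $(\tilde c,\tilde p,\tilde q)$ of Proposition \ref{p4.1}, and then use upper/lower solutions built from this triplet to squeeze $h(t)$ between $\tilde c t$ and $(\tilde c-\varepsilon)t$ up to bounded errors; part (2) will be handled by a kernel-truncation argument that reduces to part (1).

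For the upper bound in part (1), I would take $\bar h(t)=\tilde ct+h_0+K$ with $K\gg1$, and define $\bar u(t,x)=\tilde p(x-\bar h(t))$, $\bar v(t,x)=\tilde q(x-\bar h(t))$ on $[0,\bar h(t)]$. Writing $z=x-\bar h(t)\in[-\bar h(t),0]$ and using the semi-wave equations in \eqref{4.1}, the truncated operator
$d_1\int_0^{\bar h(t)}J_1(x-y)\bar u(t,y)\,dy-d_1 j_1(x)\bar u$
equals $d_1\int_{-\infty}^0J_1(z-y)\tilde p(y)\,dy-d_1\tilde p(z)$ plus a tail term
$-d_1\int_{-\infty}^{-\bar h(t)}J_1(z-y)\tilde p(y)\,dy-d_1\tilde p(z)\int_{-\infty}^{-\bar h(t)}J_1(-x-s)ds$
which has the correct sign to make $\bar u$ a super-solution; an analogous computation handles $\bar v$. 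The integral definition of $\tilde c$ in \eqref{4.1} ensures $\bar h'(t)=\tilde c$ dominates the Stefan integral for $(\bar u,\bar v)$ up to a tail that is summable under $({\bf J1})$ and absorbed by choosing $K$ large. A comparison argument, done as in \cite{DN2,DN8}, yields $h(t)\le\bar h(t)$, hence $\limsup_{t\to\infty}h(t)/t\le\tilde c$.

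For the lower bound, fix $c'\in(0,\tilde c)$ and approximate the semi-wave by $(c'_n,\tilde p_n,\tilde q_n)$ corresponding to parameters $(\mu_1(1-\varepsilon),\mu_2(1-\varepsilon))$ with $c'_n\to\tilde c$ as $\varepsilon\to 0$ (using continuous dependence of the semi-wave speed on $\mu_i$, cf.\ Proposition \ref{p4.1} and the arguments of \cite{DN2}). Since spreading holds, $h_\infty=\infty$ and Lemma \ref{l3.2} gives $(u,v)\to(U,V)$ in $C_{\rm loc}([0,\infty))$. Hence for any $L>0$ there is $T_L$ such that $(u(T_L,x),v(T_L,x))\ge(1-\varepsilon)(\tilde p_n(x-L),\tilde q_n(x-L))$ on $[0,L]$, the right side being $0$ at $x=L$. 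Define $\underline h(t)=L+c'(t-T_L)$ and use $(1-\varepsilon)(\tilde p_n(x-\underline h(t)),\tilde q_n(x-\underline h(t)))$ as a sub-solution on $[0,\underline h(t)]$ for $t\ge T_L$; the fixed-boundary correction at $x=0$ is harmless because $\tilde p_n,\tilde q_n$ are already close to their limits $U,V$ on a large left-interval. Comparison gives $h(t)\ge c'(t-T_L)+L$, hence $\liminf h(t)/t\ge c'$, and letting $\varepsilon\downarrow 0$ yields $\liminf h(t)/t\ge \tilde c$. Once $h(t)/t\to\tilde c$ is established, the uniform convergence of $(u,v)$ to $(U,V)$ on $[0,ct]$ for $c<\tilde c$ follows by sandwiching: from below by the shifted semi-wave sub-solution and from above by the fixed-boundary problem on $[0,h(t)]$ whose solution tends to $(U,V)$ by Lemma \ref{l2.5}. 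The spreading-speed statement for the level sets $\{x:u(t,x)=\tau U\}$, $\{x:v(t,x)=\tau V\}$ is then an immediate corollary of the uniform convergence together with the upper bound $h(t)\le\tilde c t+O(1)$ and monotonicity of $\tilde p,\tilde q$.

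For part (2), truncate the kernels: $J_i^{(n)}(x)=J_i(x)\mathbf 1_{\{|x|\le n\}}+\sigma_n\phi(x)$ where $\sigma_n\to 0$ and $\phi$ is a fixed even probability density supported in $[-1,1]$, normalized so that $\int J_i^{(n)}=1$. Each $J_i^{(n)}$ satisfies both $({\bf J})$ and $({\bf J1})$, so the semi-wave problem \eqref{4.1} for $J_i^{(n)}$ has speed $\tilde c_n$. The key claim is $\tilde c_n\to\infty$: if not, Helly's selection theorem applied to the bounded monotone profiles $(\tilde p_n,\tilde q_n)$, together with dominated convergence on the Stefan integral, produces a solution of \eqref{4.1} for the original $J_i$, contradicting the second half of Proposition \ref{p4.1}. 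Next, since $J_i\ge J_i^{(n)}-\sigma_n\phi$, a sub-solution comparison (absorbing $\sigma_n\phi$ into a decreased $a,b$ and then letting $\sigma_n\to 0$) shows the solution of \eqref{1.10} with $J_i^{(n)}$ is a lower solution for the original, so part (1) gives $\liminf_{t\to\infty} h(t)/t\ge\tilde c_n$ for every $n$; thus $h(t)/t\to\infty$. The convergence of $(u,v)$ on $[0,ct]$ and the level-set conclusions then follow exactly as in part (1).

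The main obstacle will be the lower-bound construction in part (1): because of the fixed boundary at $x=0$ and the spatial inhomogeneity created by $j_i(x)$, the translation-invariant moving sub-solution of \cite{DN2} does not directly apply, and one must show that the left boundary term at $x=0$ generated by the shift does not destroy the sub-solution inequality. The remedy is to delay the construction until $(u,v)$ is already $(1-\varepsilon)$-close to $(U,V)$ on a sufficiently long initial interval (Lemma \ref{l3.2}), so that the sub-solution is effectively an interior perturbation of $(U,V)$ near $x=0$.
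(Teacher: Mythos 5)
Your upper bound and the level--set deductions are essentially the paper's argument, but your lower bound contains a genuine gap that your proposed ``remedy'' does not close. The problem is not only at $x=0$ and not only at the initial time. Write $\underline h(t)=L+c'(t-T_L)$, $\underline u=(1-\ep)\tilde p(x-\underline h(t))$ and $z=x-\underline h(t)$. Imposing direct ordering $\underline u\le u$ on $[0,L]$ for all $t$ (via Lemma \ref{l3.2}) is fine, but on the remaining region $x\in[L,\underline h(t))$ the sub--solution differential inequality requires
\[
(\tilde c-c')\,|\tilde p'(z)|\;\ge\; d_1\int_{-\infty}^{-\underline h(t)}J_1(z-y)\big[\tilde p(y)-\tilde p(z)\big]\,{\rm d}y ,
\]
because truncating the nonlocal integral to $[0,\underline h(t)]$ \emph{removes} a nonnegative contribution from $y<0$, which is a loss for a sub--solution (for the super--solution the same truncation helps, which is why your upper bound works). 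The right--hand side is of order $[U-\tilde p(z)]\int_x^{\yy}J_1(s)\,{\rm d}s$; as $z\to-\yy$ both sides tend to $0$, and without quantitative decay estimates relating $|\tilde p'(z)|$ to $U-\tilde p(z)$ there is no way to conclude the inequality. Delaying the construction until $(u,v)$ is close to $(U,V)$ near $x=0$ does not touch this: the error term lives on all of $[L,\underline h(t))$ for all $t$. The paper circumvents exactly this obstruction by first replacing $J_i$ with compactly supported truncations $J_i^n$ (so that for $x\ge L$ the left tail of the integral is \emph{identically zero} and no absorption is needed), perturbing the zeroth--order coefficients to $a+\sigma$, $b+\sigma$ so that the truncated problem \eqref{4.5} is a genuine lower solution of \eqref{1.10} (Lemma \ref{l4.5}, using $d_i(j_i^n-j_i)+\sigma\ge0$ and $J_i^n\le J_i$), and then passing to the limit in $n$ and $\sigma$ via Lemmas \ref{l4.2}--\ref{l4.3}. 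Your plan has no substitute for this step; perturbing $\mu_i$ only produces the slack $(\tilde c-c')|\tilde p'|$, which is exactly the quantity that degenerates.

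Two secondary points. First, in the upper bound you must inflate the profile to $(1+\ep)(\tilde p,\tilde q)$ as in Lemma \ref{l4.1}: since $\tilde p(x-L)<U$ for every finite shift while $u(T,\cdot)$ can approach (or initially exceed) $U$, the uninflated profile cannot dominate the data at the comparison time, no matter how large $K$ is; the inflation is harmless in the differential inequalities precisely because $H(z)/z$ and $G(z)/z$ are decreasing. Second, in part (2) your renormalized truncation $J_i^{(n)}=J_i\mathbf 1_{\{|x|\le n\}}+\sigma_n\phi$ breaks the pointwise ordering $J_i^{(n)}\le J_i$, and the resulting excess $d_1\sigma_n\int\phi(x-y)u(y)\,{\rm d}y$ is bounded by a constant, not by a multiple of $u(x)$, so it cannot be absorbed into a decreased $a$ near the free boundary where $u$ is small; keep $\int J_i^n\le 1$ un-normalized, as the paper does, and compensate with $a+\sigma$, $b+\sigma$. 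Your compactness argument for $\tilde c_n\to\yy$ when {\bf(J1)} fails is a viable alternative to the paper's direct divergence estimate in Lemma \ref{l4.3}, provided you also verify that the Helly limit has the correct limits at $-\yy$ so that it is a genuine semi-wave.
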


We shall prove Theorem \ref{t1.4} by using solutions of problem \eqref{4.1} and its variations to build suitable upper and lower solutions. The proof is divided into several lemmas.

\begin{lemma}\label{l4.1}Suppose that {\bf (J1)} holds. Let $(u,v,h)$ be the unique solution of \eqref{1.10}. Then $\limsup_{t\to\yy}\frac{h(t)}{t}\le \tilde c$,
where $\tilde c$ is uniquely given by Proposition \ref{p4.1}.
\end{lemma}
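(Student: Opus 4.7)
The plan is to construct an upper solution of \eqref{1.10} whose free boundary moves at exactly the semi-wave speed $\tilde c$, and then invoke a comparison principle. Concretely, let $(\tilde c,\tilde p,\tilde q)$ be the triplet from Proposition \ref{p4.1} and define
\[
\bar h(t)=\tilde ct+K,\qquad \bar u(t,x)=\tilde p(x-\bar h(t)),\qquad \bar v(t,x)=\tilde q(x-\bar h(t)),\qquad 0\le x\le\bar h(t),
\]
where $K>h_0$ is chosen large enough that $\bar u(0,x)\ge u_0(x)$ and $\bar v(0,x)\ge v_0(x)$ on $[0,h_0]$; this is possible because $\tilde p(-\yy)=U$, $\tilde q(-\yy)=V$ and $u_0, v_0$ are bounded with $u_0,v_0<U,V$ respectively (or at least dominated after choosing $K$ sufficiently large along with scaling, since $u_0,v_0$ compactly supported).

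Next I would verify the differential inequalities by changing variables $z=x-\bar h(t)$, $w=y-\bar h(t)$. Using that $\tilde p$ solves the semi-wave equation on $(-\yy,0)$, a direct computation gives
\begin{align*}
&\bar u_t-d_1\!\int_0^{\bar h(t)}\!J_1(x-y)\bar u(t,y)\dy+d_1j_1(x)\bar u+a\bar u-H(\bar v)\\
&=\,-\tilde c\tilde p'(z)-d_1\!\int_{-\bar h(t)}^0\!J_1(z-w)\tilde p(w)\dw+d_1j_1(x)\tilde p(z)+a\tilde p(z)-H(\tilde q(z))\\
&=\,d_1\!\int_{-\yy}^{-\bar h(t)}\!J_1(z-w)\bigl[\tilde p(z)-\tilde p(w)\bigr]\dw\;\le\;0,
\end{align*}
the last inequality because $\tilde p$ is strictly decreasing and $w<-\bar h(t)\le z$, so $\tilde p(w)\ge\tilde p(z)$. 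The analogous computation for $\bar v$ is identical. For the free boundary, using the definition of $\tilde c$ in \eqref{4.1}, the same change of variables yields
\[
\bar h'(t)=\tilde c=\int_{-\yy}^0\!\int_0^{\yy}\!\bigl[\mu_1J_1(z-s)\tilde p(z)+\mu_2J_2(z-s)\tilde q(z)\bigr]\ds\dz\ge\int_0^{\bar h(t)}\!\int_{\bar h(t)}^{\yy}\!\bigl[\mu_1J_1(x-y)\bar u+\mu_2J_2(x-y)\bar v\bigr]\dy\dx,
\]
since we are simply restricting the outer integration from $(-\yy,0)$ to $(-\bar h(t),0)$ and the integrand is nonnegative. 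Thus $(\bar u,\bar v,\bar h)$ is an upper solution.

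Applying a comparison principle for \eqref{1.10} of the type established in \cite{DN8,NV} (which extends verbatim to our one-fixed, one-free boundary setting; the fixed boundary at $x=0$ only simplifies matters), I conclude $h(t)\le\bar h(t)=\tilde ct+K$ for all $t\ge 0$. Dividing by $t$ and letting $t\to\yy$ gives $\limsup_{t\to\yy}h(t)/t\le\tilde c$. The main technical obstacle is the mismatch between the semi-wave domain $(-\yy,0)$ and the problem's spatial domain $[0,\bar h(t)]$: this introduces the tail correction term $\int_{-\yy}^{-\bar h(t)}\!J_1(z-w)[\tilde p(z)-\tilde p(w)]\dw$ above. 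Monotonicity of the semi-wave profiles makes this correction favorably signed; without $({\bf J1})$, the semi-wave would not exist and this argument breaks down, which is consistent with the accelerated spreading phenomenon recorded in part~(2) of Theorem \ref{t1.4}.
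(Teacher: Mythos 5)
Your overall strategy --- an upper solution built from the semi-wave profile plus a comparison principle --- is exactly the paper's, but there are two genuine gaps in the execution. The more serious one is the initial ordering. Condition {\bf (I)} does not give $u_0<U$ or $v_0<V$, and since $\tilde p$ is strictly increasing towards its limit $U$ as $x\to-\yy$, one has $\tilde p(x-K)<U$ for every $x$ and every $K$; so the unscaled profile can never dominate initial data that touch or exceed $(U,V)$, and no choice of $K$ repairs this. Your parenthetical fix (``scaling'') is the right idea but cannot be left implicit: if you scale by a fixed large factor $\gamma$ to swallow $\|u_0\|_\yy$, the barrier's boundary moves at speed $\gamma\tilde c$ and you only get $\limsup h(t)/t\le\gamma\tilde c$. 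The paper's resolution is to first prove $\limsup_{t\to\yy}u\le U$, $\limsup_{t\to\yy}v\le V$ uniformly in $x$ (via the spatially homogeneous ODE system and phase-plane analysis), then start the comparison at a large time $T$ where $u\le(1+\ep/2)U$, scale the profile by $1+\ep$ only, and let $\ep\to0$ at the end. Note that once the factor $1+\ep$ is introduced, your exact cancellation of the $H$ and $G$ terms is lost: one must additionally invoke {\bf (H1)} (that $H(z)/z$ and $G(z)/z$ are decreasing) to get $H((1+\ep)\tilde q)\le(1+\ep)H(\tilde q)$, and use $\tilde p'\le 0$ to absorb the extra $\ep\tilde c\,\tilde p'$ term; these are the real places where the hypotheses enter.

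The second issue is a sign error in your key display. Carrying out the substitution correctly, the residual is
\[
d_1\int_{-\yy}^{-\bar h(t)}J_1(z-w)\bigl[\tilde p(w)-\tilde p(z)\bigr]{\rm d}w\;\ge\;0,
\]
since $w<-\bar h(t)\le z$ and $\tilde p$ is decreasing; this nonnegativity is precisely what the upper-solution inequality $\bar u_t\ge\cdots$ requires. As written, you obtained the bracket with the opposite orientation, concluded the expression is $\le 0$, and then still declared $(\bar u,\bar v,\bar h)$ an upper solution --- if the residual really were $\le 0$ the construction would fail. The free-boundary inequality and the final comparison step are fine.
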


\begin{proof} Define $\bar{h}(t)=(1+\ep)\tilde ct+L$, $\bar{u}(t,x)=(1+\ep)\tilde p(x-\bar{h}(t))$, $\bar{v}(t,x)=(1+\ep)\tilde q(x-\bar{h}(t))$,
where $0<\ep\ll 1$ and $L>0$ is a positive constant to be determined later. We now prove that there exist suitable $L$ and $T$ such that $(\bar{u},\bar{v},\bar{h})$ satisfies
 \bes\left\{\begin{aligned}\label{4.2}
&\bar{u}_t\ge d_1\int_0^{\bar{h}(t)}\!\!J_1(x-y)\bar{u}(t,y)\dy-d_1j_1(x)\bar{u}-a\bar{u}+H(\bar{v}), \hspace{2mm} t>0, ~ x\in[0,\bar{h}(t)),\\
 &\bar{v}_t\ge d_2\int_0^{\bar{h}(t)}\!\!J_2(x-y)\bar{v}(t,y)\dy-d_2j_2(x)\bar{v}-b\bar{v}+G(\bar{u}), \hspace{3mm} t>0, ~ x\in[0,\bar{h}(t)),\\
&\bar{u}(t,\bar{h}(t))\ge0, ~ \bar{v}(t,\bar{h}(t))\ge0, \hspace{48mm} t>0,\\
&\bar h'(t)\ge \int_0^{\bar{h}(t)}\!\!\int_{\bar{h}(t)}^{\yy}\big[\mu_1J_1(x-y)\bar{u}(t,x)
+\mu_2J_2(x-y)\bar{v}(t,x)\big]\dy\dx,\;\;t>0,\\
&\bar{h}(0)\ge h(T),~ \bar{u}(0,x)\ge u(T,x),\;\; \bar{v}(0,x)\ge v(T,x), ~\;\; x\in[0,h(T)].
 \end{aligned}\right.
 \ees
Once this is done, by comparison principle, we derive that $\bar{h}(t)\ge h(t+T)$, $\bar{u}(t,x)\ge u(t+T,x)$ and $\bar{v}(t,x)\ge v(t+T,x)$ for $t\ge0$ and $x\in[0,h(t+T)]$, which indicates $\limsup_{t\to\yy}\frac{h(t)}{t}\le(1+\ep)\tilde c$. By the arbitrariness of $\ep$, the desired result holds. Thus, it suffices to verify \eqref{4.2}.

Let us begin with proving the first two inequalities in \eqref{4.2}. Notice that  $\tilde p(x), \tilde q(x)$ are strictly decreasing in $x<0$, and ${H(z)}/{z}$ is decreasing and ${G(z)}/{z}$ is strictly decreasing in $z>0$. To save space, in this part we set $1+\ep=\gamma$ and $\rho=\rho(x,t)=x-\bar{h}(t)$. Direct computations yield that, for $t>0$ and $x\in[0,\bar{h}(t))$,
 \bes
&&\frac 1\gamma\kk(\bar{u}_t-d_1\int_0^{\bar{h}(t)}\!J_1(x-y)\bar{u}(t,y)\dy
+d_1j_1(x)\bar{u}+a\bar{u}-H(\bar{v})\rr)\nonumber\\[1mm]
&=&-\gamma \tilde c\tilde p'(\rho)-d_1\int_0^{\bar{h}(t)}
\!J_1(x-y)\tilde p(y-\bar{h}(t))\dy+d_1j_1(x)\tilde p(\rho)+a\tilde p(\rho)-\frac 1\gamma
H(\gamma\tilde q(\rho))\nonumber\\[1mm]
&\ge&-\tilde c\tilde p'(\rho)-d_1\int_0^{\bar{h}(t)}
\!J_1(x-y)\tilde p(y-\bar{h}(t))\dy
+d_1j_1(x)\tilde p(\rho)+a\tilde p(\rho)-\frac 1\gamma H(\gamma\tilde q(\rho))\nonumber\\[1mm]
&=&d_1\int_{-\yy}^0\!J_1(x-\bar{h}(t)-y)\tilde p(y)\dy
-d_1\tilde p(\rho)-a\tilde p(\rho)+H(\tilde q(\rho))\nonumber\\[1mm]
&&-d_1\int_0^{\bar{h}(t)}\!J_1(x-y)\tilde p(y-\bar{h}(t))\dy
+d_1j_1(x)\tilde p(\rho)+a\tilde p(\rho)-\frac 1 \gamma H(\gamma\tilde q(\rho))\nonumber\\[1mm]
&=&d_1\kk(\int_{-\yy}^0\!J_1(x-y)\big[\tilde p(y-\bar{h}(t))
-\tilde p(\rho)\big]\rr)\dy+H(\tilde q(\rho))-\frac 1\gamma H(\gamma\tilde q(\rho))\ge 0.\lbl{4.3a}
 \ees
Similarly, we can prove the second inequality of \eqref{4.2}. From our definitions of $\bar{u}$ and $\bar{v}$, it is clear that $\bar{u}(t,\bar{h}(t))=\bar{v}(t,\bar{h}(t))=0$ for $t>0$. Then we check the fourth inequality in \eqref{4.2}. Simple calculations show
 \bess
&&\int_0^{\bar{h}(t)}\!\int_{\bar{h}(t)}^{\yy}\big[\mu_1J_1(x-y)\bar{u}(t,x)
+\mu_2J_2(x-y)\bar{v}(t,x)\big]\dy\dx\\
&\le&(1+\ep)\int_{-\yy}^0\int_0^{\yy}\big[\mu_1J_1(x-y)\tilde p(x)+\mu_2J_2(x-y)\tilde q(x)\big]\dy\dx=(1+\ep)\tilde c=\bar{h}'(t).
 \eess

It remains to show the inequalities in the last two lines of \eqref{4.2}. Let $(\ol u(t), \ol v(t))$ be the unique solution of the corresponding ODE of \eqref{1.10} with $(\ol u(0),\ol v(0))=(\|u_0\|_\yy,\|v_0\|_\yy)$. Under the condition {\bf(H1)}, we can show, by phase plane
analysis, that $\lim_{t\to\yy}(\ol u(t), \ol v(t))=(0,0)$ if $\mathcal{R}_0<1$, and $\lim_{t\to\yy}(\ol u(t), \ol v(t))=(U,V)$ if $\mathcal{R}_0>1$. Moreover,
by a simple comparison argument, we know that the solution component $(u,v)$ satisfy that $(u, v)\le(\ol u, \ol v)$. Consequently, $\limsup_{t\to\yy}u\le U$ and $\limsup_{t\to\yy}v\le V$ uniformly in $x\in[0,\yy)$. For the given $\ep>0$, we can find $T>0$ such that $u\le (1+\ep/2)U$ and $v\le (1+\ep/2)V$ for $t\ge T$ and $x\ge0$. There exists $L\gg h(T)$, such that $\bar{u}(0,x)=(1+\ep)\tilde p(x-L)\ge(1+\ep/2)U\ge u(t+T,x)$ and $\bar{v}(0,x)=(1+\ep)\tilde q(x-L)\ge(1+\ep/2)V\ge v(t+T,x)$ for $t\ge0$ and $x\in[0,h(T)]$. Inequalities in the last two lines of \eqref{4.2} are verified. Therefore, \eqref{4.2} holds and the proof is complete.
\end{proof}

Then we prove the lower limit of $h(t)$ which will be handled by several lemmas. Due to $\mathcal{R}_0>1$, there exists a $\sigma_0>0$ such that $\frac{H'(0)G'(0)}{(a+\sigma)(b+\sigma)}>1$ for all $\sigma\in(0,\sigma_0)$. Then obviously, the system
  \[(a+\sigma)u=H(v), ~ ~ (b+\sigma)v=G(u)\]
has a unique positive root $(U_\sigma,V_\sigma)$ with $U>U_\sigma$ and $V>V_\sigma$. By Proposition \ref{p4.1}, the corresponding semi-wave problem
\bes\left\{\begin{aligned}\label{4.3}
&d_1\int_{-\yy}^0J_1(x-y)p(y)\dy-d_1p+cp'-(a+\sigma)p+H(q)=0, & & x\in(-\yy,0),\\
&d_2\int_{-\yy}^0J_2(x-y)q(y)\dy-d_2q+cq'-(b+\sigma)q+G(p)=0, & & x\in(-\yy,0),\\
&p(-\yy)=U_\sigma, ~ ~ q(-\yy)=V_\sigma, ~ ~p(0)=q(0)=0,\\
&c=\int_{-\yy}^0\int_0^{\yy}\big[\mu_1J_1(x-y)p(x)
+\mu_2J_2(x-y)q(x)\big]\dy\dx
 \end{aligned}\right.
 \ees
has a unique solution triplet $(\tilde c_{\sigma},\tilde p_\sigma,\tilde q_\sigma)$, where $\tilde c_{\sigma}>0$, and both $\tilde p_\sigma$ and $\tilde q_\sigma$ are strictly decreasing in $(-\yy,0]$ if and only if {\bf(J1)} holds.

\begin{lemma}\label{l4.2}Assume that {\bf (J1)} holds. Then $\tilde c_{\sigma}\to \tilde c$, $(\tilde p_\sigma, \tilde q_\sigma)\to (\tilde p, \tilde q)$ in $[C_{\rm loc}([0,\yy))]^2$ as $\sigma\to0$.
 \end{lemma}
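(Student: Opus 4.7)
The plan is to exploit monotonicity of the triplet $(\tilde c_\sigma,\tilde p_\sigma,\tilde q_\sigma)$ in $\sigma$, extract monotone limits as $\sigma\to 0^+$, pass to the limit in \eqref{4.3}, and invoke the uniqueness part of Proposition \ref{p4.1} to identify the limit with $(\tilde c,\tilde p,\tilde q)$.

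First I would record two preliminary facts. By the implicit function theorem applied to the algebraic system $(a+\sigma)u=H(v)$, $(b+\sigma)v=G(u)$ (whose Jacobian is non-singular because $H(z)/z$ is decreasing and $G(z)/z$ is strictly decreasing), the positive equilibria satisfy $(U_\sigma,V_\sigma)\nearrow(U,V)$ continuously as $\sigma\to 0^+$. Second, treating $(\tilde c,\tilde p,\tilde q)$ as an upper solution of \eqref{4.3} (enlarging $a$ to $a+\sigma$ and $b$ to $b+\sigma$ only makes the reaction terms in the first two equations of \eqref{4.3} more negative at a fixed profile) and invoking the comparison machinery behind the uniqueness in Proposition \ref{p4.1}, I obtain $\tilde c_\sigma\le \tilde c$ and $(\tilde p_\sigma,\tilde q_\sigma)\le(\tilde p,\tilde q)$ on $(-\infty,0]$ for all small $\sigma>0$. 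Running the same comparison between two parameter values $0<\sigma_1<\sigma_2<\sigma_0$ yields the monotonicity $\tilde c_{\sigma_1}\ge \tilde c_{\sigma_2}$ and $(\tilde p_{\sigma_1},\tilde q_{\sigma_1})\ge(\tilde p_{\sigma_2},\tilde q_{\sigma_2})$ on $(-\infty,0]$.

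These facts force the existence of pointwise monotone limits $c_0:=\lim_{\sigma\to 0^+}\tilde c_\sigma\in(0,\tilde c]$, $p_0(x):=\lim_{\sigma\to 0^+}\tilde p_\sigma(x)$, and $q_0(x):=\lim_{\sigma\to 0^+}\tilde q_\sigma(x)$, with $0\le p_0\le \tilde p$ and $0\le q_0\le \tilde q$. I would then pass to the limit in \eqref{4.3}: the nonlocal integrals converge by dominated convergence (dominated by $d_1\tilde p$ and $d_2\tilde q$ respectively), the reaction terms converge by continuity of $H$ and $G$, and the integral identity defining $\tilde c_\sigma$ passes to the limit by dominated convergence with the same dominating functions. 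The boundary values $p_0(0)=q_0(0)=0$ are immediate from pointwise convergence; for the limit at $-\infty$, monotonicity in $\sigma$ combined with $\tilde p_\sigma(-\infty)=U_\sigma\nearrow U$ pins down $p_0(-\infty)=U$, and similarly $q_0(-\infty)=V$. Hence $(c_0,p_0,q_0)$ satisfies \eqref{4.1}, so the uniqueness in Proposition \ref{p4.1} forces $(c_0,p_0,q_0)=(\tilde c,\tilde p,\tilde q)$, and Dini's theorem upgrades monotone pointwise convergence to a continuous limit into locally uniform convergence.

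The main obstacle I anticipate is passing to the limit in the first-order derivative terms $\tilde c_\sigma\tilde p_\sigma'$ and $\tilde c_\sigma\tilde q_\sigma'$ in \eqref{4.3}, since monotone pointwise convergence alone does not control derivatives. I would sidestep this by using $\tilde c_\sigma\ge \tilde c_{\sigma_0}>0$ to rewrite the first equation of \eqref{4.3} as
\[
\tilde p_\sigma'(x)=\tilde c_\sigma^{-1}\Bigl[(d_1+a+\sigma)\tilde p_\sigma(x)-d_1\!\int_{-\infty}^0\!J_1(x-y)\tilde p_\sigma(y)\,\dy-H(\tilde q_\sigma(x))\Bigr],
\]
and analogously for $\tilde q_\sigma'$; the right-hand sides are uniformly bounded in $\sigma$ and converge locally uniformly, so $\{\tilde p_\sigma\}_\sigma$ and $\{\tilde q_\sigma\}_\sigma$ are locally equicontinuous. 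Arzel\`a--Ascoli, combined with the monotone pointwise limit identifying every accumulation point, then upgrades the convergence of $(\tilde p_\sigma,\tilde q_\sigma)$ to local $C^1$ convergence, and passing to the limit in the displayed identity confirms that $(c_0,p_0,q_0)$ solves \eqref{4.1} in the classical sense, closing the argument.
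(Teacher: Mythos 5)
Your proposal is correct and follows the same overall strategy as the paper: monotonicity of $(\tilde c_\sigma,\tilde p_\sigma,\tilde q_\sigma)$ in $\sigma$ together with the bound by $(\tilde c,\tilde p,\tilde q)$ (which the paper simply imports from \cite[Lemma 2.8]{DN2}, while you sketch the underlying comparison argument), extraction of monotone pointwise limits, passage to the limit in \eqref{4.3} and in the speed identity, identification of the limit at $-\infty$ by sandwiching between $U_\sigma\nearrow U$ and $\tilde p(-\yy)=U$, uniqueness from Proposition \ref{p4.1}, and finally Dini's theorem. The one genuine point of divergence is how the first-order terms $\tilde c_\sigma\tilde p_\sigma'$, $\tilde c_\sigma\tilde q_\sigma'$ are handled: the paper integrates each equation from $0$ to $x$, passes to the limit in the resulting integral identity by dominated convergence, and then differentiates the limit identity, which avoids any discussion of derivative convergence altogether; you instead solve the equations for $\tilde p_\sigma'$, $\tilde q_\sigma'$, use $\tilde c_\sigma\ge\tilde c_{\sigma_1}>0$ (for a fixed reference $\sigma_1\in(0,\sigma_0)$) to get uniform local bounds on the derivatives, and run Arzel\`a--Ascoli. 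Both devices close the gap; the paper's integrate-then-differentiate trick is slightly more economical, while your route additionally yields local $C^1$ convergence. One small ordering caveat in your last step: you should first deduce local equicontinuity from the uniform derivative bounds alone (not from locally uniform convergence of the right-hand sides, which is not yet available), use Arzel\`a--Ascoli plus the pointwise monotone limit to upgrade to locally uniform convergence of $(\tilde p_\sigma,\tilde q_\sigma)$, and only then conclude that the right-hand sides converge locally uniformly and that the limit solves \eqref{4.1} classically.
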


\begin{proof}Let $\{\sigma_n\}\subseteq(0,\sigma_0)$ with $\sigma_n$ decreasing to $0$, and denote $(\tilde c_{\sigma_n},\tilde p_{\sigma_n},\tilde q_{\sigma_n})$ by $(\tilde c_n,\tilde p_n,\tilde q_n)$. Similarly to \cite[Lemma 2.8]{DN2}, we have $(\tilde c_n, \tilde p_n, \tilde q_n)\le (\tilde c_{n+1}, \tilde p_{n+1}, \tilde q_{n+1})\le (\tilde c, \tilde p,\tilde q)$. Thus we can define $(\bar c, \bar p, \bar q) =\dd\lim_{n\to\yy}(\tilde c_n, \tilde p_n, \tilde q_n)$ with $\bar c\in(0,\tilde c]$. Obviously, $\bar p(x)$ and $\bar q(x)$ are decreasing in $(-\yy,0]$. For any $x<0$, integrating the first equality of \eqref{4.3} leads to
 \bess
 \tilde c_n\tilde p_n(0)-\tilde c_n\tilde p_n(x)
 =\int_0^{x}\!\kk(\!d_1\int_{-\yy}^0\!J_1(z-y)\tilde p_n(y)\dy
 -d_1\tilde p_n(z)-(a+\sigma_n)\tilde p_n(z)+H(\tilde q_n(z))\!\rr){\rm d}z.
 \eess
Letting $n\to\yy$ and using the dominated convergence theorem, we have
 \[\bar c\bar p(0)-\bar c\bar p(x)=\dd\int_0^{x}
 \kk(d_1\int_{-\yy}^0\!J_1(z-y)\bar p(y)\dy-d_1\bar p(z)
 -a\bar p(z)+H(\bar q(z))\rr){\rm d}z.\]
Differentiating the above equality yields
 \[-\bar c\bar p'(x)=d_1\int_{-\yy}^0J_1(x-y)\bar p(y)\dy-d_1\bar p(x)
 -a\bar p(x)+H(\bar q(x)).\]
Similarly, we have
  \[d_2\int_{-\yy}^0J_2(x-y)\bar q(y)\dy-d_2\bar q(x)+\bar c\bar q'(x)-b\bar q(x)+G(\bar p(x))=0, ~ x<0.\]
Notice that $\tilde p_n\le\bar p\le\tilde p$, $\tilde q_n\le\bar  q\le\tilde q$, and $\tilde p_n(-\yy)=K^{\sigma_n}_1\to U=\tilde p(-\yy)$, $\tilde q_n(-\yy)=K^{\sigma_n}_2\to V=\tilde q(-\yy)$ as $n\to\yy$. We easily derive that $\bar p(-\yy)=U$, $\bar q(-\yy)=V$.

Moreover, by monotone convergence theorem, we have that as $n\to\yy$,
 \bess
 \tilde c_n&=&\int_{-\yy}^0\int_0^{\yy}\big[\mu_1J_1(x-y)\tilde
 p_n(x)+\mu_2J_2(x-y)\tilde q_n(x)\big]\dy\dx\\
 &\to&\int_{-\yy}^0\int_0^{\yy}\big[\mu_1J_1(x-y)\bar p(x)+\mu_2J_2(x-y)\bar q(x)\big]\dy\dx=\bar c.
 \eess
Taking advantage of Proposition \ref{p4.1}, we have $\bar c=\tilde c$, and $\bar p(x)=\tilde p(x)$, $\bar q(x)=\tilde q(x)$. Together with Dini's theorem, we have  $\tilde p_n\to\tilde p$ and $\tilde q_n\to\tilde q$ in $C_{\rm loc}([0,\yy))$ which completes the proof.
 \end{proof}

For $n\ge1$, define
 \bess
 &\xi(x)=1, ~ |x|\le1; ~ ~ ~ \xi(x)=2-|x|, ~ 1<|x|\le2; ~ ~ ~  \xi(x)=0, ~ |x|>2,\\
 &J^n_i(x)=J_i(x)\xi(\frac{x}{n}), ~ j^n_i(x)=\dd\int_0^{\yy}J^n_i(x-y)\dy.
 \eess
Then it is not hard to verify that $J^n_i$ is supported compactly, increasing in $n$ and $J^n_i\le J_i$ for $x\in\mathbb{R}$. What's more, $J^n_i\to J_i$ in $L^1(\mathbb{R})$ and $C_{\rm loc}(\mathbb{R})$, and $j^n_i\to j_i$ in $L^{\yy}(\mathbb{R})$ as $n\to\yy$. For any $\sigma\in(0,\sigma_0)$, we can choose $n$ large enough, say $n\ge N$, such that $d_i(j^n_i(x)-j_i(x))+\sigma\ge0$ in $\mathbb{R}$ and \[\frac{H'(0)G'(0)}{\big[a+\sigma+d_1(1-\|J^n_1\|_1)\big]
\big[b+\sigma+d_2(1-\|J^n_2\|_1)\big]}>1.\]
  Consider the following semi-wave problem
 \bes\left\{\begin{aligned}\label{4.4}
&d_1\int_{-\yy}^0J^n_1(x-y)p(y)\dy-d_1p+cp'-(a+\sigma)p+H(q)=0, & & x\in(-\yy,0),\\
&d_2\int_{-\yy}^0J^n_2(x-y)q(y)\dy-d_2q+cq'-(b+\sigma)q+G(p)=0, & & x\in(-\yy,0),\\
&p(-\yy)=U_\sigma^n, ~ ~ q(-\yy)=V_\sigma^n, ~ ~p(0)=q(0)=0,\\
&c=\int_{-\yy}^0\int_0^{\yy}\!\big[\mu_1J^n_1(x-y) p(x)+
\mu_2J^n_2(x-y) q(x)\big]\dy\dx,
 \end{aligned}\right.
 \ees
where $\sigma\in(0,\sigma_0)$, and $(U_\sigma^n,V_\sigma^n)$ is the unique positive root of
\[d_1(\|J^n_1\|_1-1)u-(a+\sigma)u+H(v)=0, ~ ~ d_2(\|J^n_2\|_1-1)v-(b+\sigma)v+G(u)=0.\]
Note that both $J^n_1$ and $J^n_2$ are supported compactly. In view of Proposition \ref{l4.1}, problem \eqref{4.4} has a unique solution triplet $(\tilde c^n_\sigma,\,\tilde p^n_\sigma,\,\tilde q^n_\sigma)$.

\begin{lemma}\label{l4.3}If {\bf (J1)} holds, then $\tilde c^n_\sigma\to \tilde c_{\sigma}$, $\tilde p^n_\sigma\to\tilde p_{\sigma}$ and $\tilde q^n_\sigma\to\tilde q_{\sigma}$ in $C_{\rm loc}((-\yy,0])$ as $n\to\yy$. Moreover, if {\bf (J1)} does not hold, then $\tilde c^n_\sigma\to\yy$ as $n\to\yy$.
\end{lemma}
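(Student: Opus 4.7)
The plan is to exploit that $J^n_i\nearrow J_i$ pointwise and in $L^1(\mathbb{R})$, combined with the uniqueness of the semi-wave triple guaranteed by Proposition \ref{p4.1}, mimicking the strategy used in Lemma \ref{l4.2} but with the truncation index $n$ playing the role previously played by $\sigma$.

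First I would establish the monotonicity
$(\tilde c^n_\sigma,\tilde p^n_\sigma,\tilde q^n_\sigma)\le(\tilde c^{n+1}_\sigma,\tilde p^{n+1}_\sigma,\tilde q^{n+1}_\sigma)\le(\tilde c_\sigma,\tilde p_\sigma,\tilde q_\sigma)$
by a comparison argument: since $J^n_i\le J^{n+1}_i\le J_i$ and the profiles are nonnegative and decreasing, the triple at level $n$ serves as a sub-solution (in the semi-wave sense) for the problem at level $n+1$ and for \eqref{4.3}, so the sliding/uniqueness arguments of \cite[Lemma 2.8]{DN2} used in the proof of Lemma \ref{l4.2} apply verbatim. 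In parallel, monotone convergence applied to the algebraic system defining $(U^n_\sigma,V^n_\sigma)$ gives $U^n_\sigma\nearrow U_\sigma$ and $V^n_\sigma\nearrow V_\sigma$.

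With these monotone bounds in hand, set $\bar c=\lim_{n\to\yy}\tilde c^n_\sigma\in(0,\tilde c_\sigma]$, $\bar p(x)=\lim_{n\to\yy}\tilde p^n_\sigma(x)$, $\bar q(x)=\lim_{n\to\yy}\tilde q^n_\sigma(x)$. Integrating the first two equations of \eqref{4.4} from $x$ to $0$ and applying the dominated convergence theorem, with envelopes $J^n_i\le J_i\in L^1(\mathbb{R})$ and the uniform bounds $\tilde p^n_\sigma\le U_\sigma$, $\tilde q^n_\sigma\le V_\sigma$, yields that $(\bar c,\bar p,\bar q)$ satisfies \eqref{4.3}; the boundary conditions $\bar p(0)=\bar q(0)=0$, $\bar p(-\yy)=U_\sigma$, $\bar q(-\yy)=V_\sigma$ pass to the limit by monotonicity. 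Uniqueness in Proposition \ref{p4.1} then forces $(\bar c,\bar p,\bar q)=(\tilde c_\sigma,\tilde p_\sigma,\tilde q_\sigma)$, and Dini's theorem upgrades pointwise monotone convergence of the profiles to convergence in $C_{\rm loc}((-\yy,0])$.

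For the case when {\bf(J1)} fails, I argue by contradiction. Suppose $\{\tilde c^n_\sigma\}$ remains bounded; being monotone increasing, it has a finite limit $\bar c$. The profiles $\tilde p^n_\sigma,\tilde q^n_\sigma$ are uniformly bounded in $n$ (by $U_\sigma,V_\sigma$) and monotone in $x$, so Helly's selection theorem supplies pointwise limits $\bar p,\bar q$; repeating the dominated convergence argument identifies $(\bar c,\bar p,\bar q)$ as a solution of \eqref{4.3}, contradicting the ``only if'' direction of Proposition \ref{p4.1}. Hence $\tilde c^n_\sigma\to\yy$.

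The main obstacle is controlling the far-field limits of the profiles so that the limit triple is a \emph{genuine} semi-wave rather than a degenerate object. In Part 1 one must verify $\bar p(-\yy)=U_\sigma$, $\bar q(-\yy)=V_\sigma$, which is delivered by the joint monotonicity of $n\mapsto U^n_\sigma$ and of $n\mapsto \tilde p^n_\sigma$; in Part 2 the potential collapse to $\bar p\equiv 0$ or $\bar q\equiv 0$ must be ruled out, and this is where the normalization contained in the speed integral equation and the monotonicity of the profiles in $x$ become essential, paralleling the non-degeneracy step in the proof of Proposition \ref{p4.1}.
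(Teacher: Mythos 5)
Your proof is correct. For the first assertion you follow exactly the paper's route: the paper also invokes the monotonicity of $(\tilde c^n_\sigma,\tilde p^n_\sigma,\tilde q^n_\sigma)$ in $n$ (via the comparison/uniqueness machinery of \cite[Lemma 2.8]{DN2}), passes to the limit in the integrated equations by dominated convergence, identifies the limit with $(\tilde c_\sigma,\tilde p_\sigma,\tilde q_\sigma)$ through the uniqueness in Proposition \ref{p4.1}, and upgrades to $C_{\rm loc}$ convergence by Dini's theorem. For the second assertion, however, you take a genuinely different route. The paper argues \emph{directly}: using $\tilde p^n_\sigma\ge\tilde p^1_\sigma$ and the speed identity in \eqref{4.4}, it bounds $\tilde c^n_\sigma$ from below by $\mu_1\tilde p^1_\sigma(-l_0)\int_{l_0}^{l}J_1(y)(y-l_0)\,\dy$ for every $l>l_0$, which diverges precisely because the first moment of $J_1$ is infinite; this makes the mechanism of the blow-up completely explicit and uses nothing beyond the formula for the speed. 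You instead argue by contradiction, passing to the limit along a bounded subsequence and invoking the ``only if'' direction of Proposition \ref{p4.1} to rule out the resulting semi-wave. That is shorter but leans on the full nonexistence half of \cite[Theorem 1.2]{DN2}, and to invoke it verbatim you would still need to check that the limit profiles $\bar p,\bar q$ are \emph{strictly} decreasing (monotone limits are a priori only non-increasing); this is a routine consequence of the limit equations but should be said. Note also that Helly's theorem is superfluous in your Part 2: the monotonicity in $n$ you already established gives the pointwise limits directly. Both arguments are valid; the paper's buys an explicit quantitative reason for $\tilde c^n_\sigma\to\yy$, yours buys brevity at the cost of a heavier imported result.
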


\begin{proof}
Recall that $J^n_i$ is increasing in $n\ge1$, $J^n_i\le J_i$ for $x\in\mathbb{R}$, and $J^n_i\to J_i$ in $L^1(\mathbb{R})$ and $C_{\rm loc}(\mathbb{R})$. Then following the similar method as in the proof of Lemma \ref{l4.2}, we can prove the first assertion and thus the details are ignored here.

We now show the second assertion. Notice that {\bf (J1)} is violated. Without loss of generality, we assume that $\int_0^{\yy}xJ_1(x)\dx=\yy$. Obviously, $\tilde p^n_\sigma$ is increasing in $n$ and $0\le\tilde p^n_\sigma\le U_\sigma$ in $(-\yy,0]$. Thus, we can define $\bar p_{\sigma}=\dd\lim_{n\to\yy}\tilde p^n_\sigma$. Using $\tilde p^n_\sigma\ge\tilde p^1_\sigma$ for $n\ge 1$, we have that, for any $l>l_0>0$,
\bess
\liminf_{n\to\yy}\int_{-\yy}^0\int_0^{\yy}\!\!J^n_1(x-y)\tilde p^n_\sigma(x)\dy\dx&\ge&\liminf_{n\to\yy}\int_{-l}^{-l_0}\int_0^lJ^n_1(x-y)\tilde p^1_\sigma(x)\dy\dx\\
&=&\int_{-l}^{-l_0}\!\int_0^lJ_1(x-y)\tilde p^1_\sigma(x)\dy\dx\\
&\ge&\tilde p^1_\sigma(-l_0)\int_{-l}^{-l_0}\!\int_{-x}^{l-x}J_1(y)\dy\dx\\
&\ge&\tilde p^1_\sigma(-l_0)\int_{l_0}^l\int_{-y}^{-l_0}J_1(y)\dx\dy\\
&=&\tilde p^1_\sigma(-l_0)\int_{l_0}^lJ_1(y)(y-l_0)\dy\\
 &\to&\yy ~ ~ ~ {\rm as } ~ l\to\yy,
\eess
 which, combined with
\bess
\liminf_{n\to\yy}\tilde c^n_\sigma\ge\liminf_{n\to\yy}\mu_1\int_{-\yy}^0\int_0^{\yy}J^n_1(x-y)\tilde p^n_\sigma(x)\dy\dx,
\eess
yields $\tilde c^n_\sigma\to\yy$ as $n\to\yy$. The proof is complete.
\end{proof}

For $n\ge N$, we consider the following auxiliary problem
 \bes\left\{\!\begin{aligned}\label{4.5}
&(u^n_\sigma)_t=d_1\int_0^{h^n_\sigma(t)}\!\!J^n_1(x\!-\!y)u^n_\sigma(t,y)\dy
\!-\!d_1j^n_1u^n_\sigma\!-\!(a\!+\!\sigma)u^n_\sigma\!+\!H(v^n_\sigma), \;\; t>0, ~ x\in[0,h^n_\sigma(t)),\\
&(v^n_\sigma)_t=d_2\int_0^{h^n_\sigma(t)}\!\!J^n_2(x\!-\!y)v^n_\sigma(t,y)\dy
\!-\!d_2j^n_2v^n_\sigma\!-\!(b\!+\!\sigma)v^n_\sigma\!+\!G(u^n_\sigma), \;\; t>0, ~ x\in[0,h^n_\sigma(t)),\\
&u^n_\sigma(t,h^n_\sigma(t))=0, ~ v^n_\sigma(t,h^n_\sigma(t))=0, \hspace{2mm} t>0,\\
&(h^n_\sigma)'(t)= \int_0^{h^n_\sigma(t)}\!\!\int_{h^n_\sigma(t)}^{\yy}\big[\mu_1J^n_1(x-y)u^n_\sigma(t,x)
+\mu_2J^n_2(x-y)v^n_\sigma(t,x)\big]\dy\dx, \;\; t>0,\\
&h^n_\sigma(0)=h(T), ~ u^n_\sigma(0,x)=u(T,x), ~ v^n_\sigma(0,x)=v(T,x), ~ x\in[0,h(T)].
 \end{aligned}\right.
 \ees
Using the same arguments as in the proofs of Lemmas \ref{l3.1} and \ref{l3.5}, we can show that there exists a critical value $\ell^{n,\sigma}_*>0$, depending only on $J^n_i$, $H'(0)$, $G'(0)$ and parameters in the first two equalities in \eqref{4.5}, such that spreading happens if $T$ large enough satisfying $h^n_\sigma(0)=h(T)\ge\ell^{n,\sigma}_*$.

 \begin{lemma}\label{l4.4} Let $(u^n_\sigma,v^n_\sigma,h^n_\sigma)$ be the unique solution of \eqref{4.5}. Then we have
 \bess
 \dd\liminf_{t\to\yy}\frac{h^n_\sigma(t)}{t}\ge \tilde c^n_\sigma,\;\;\;
 \dd\liminf_{t\to\yy}(u^n_\sigma,v^n_\sigma)\ge(U_\sigma^n,V_\sigma^n) ~ ~ {\rm uniformly ~ in ~ }x\in[0,ct], ~ \forall c\in[0,\tilde c^n_\sigma).
  \eess
 \end{lemma}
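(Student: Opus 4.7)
The plan is to construct a subsolution for \eqref{4.5} based on the semi-wave triple $(\tilde c^n_\sigma, \tilde p^n_\sigma, \tilde q^n_\sigma)$ of \eqref{4.4}. First I would note that since spreading is assumed to happen for \eqref{1.10}, we may pick $T$ so large that $h(T)\ge\ell^{n,\sigma}_*$; then, as mentioned immediately before the statement of Lemma~\ref{l4.4}, spreading occurs for $(u^n_\sigma,v^n_\sigma,h^n_\sigma)$, so $h^n_\sigma(t)\to\yy$ and $(u^n_\sigma,v^n_\sigma)\to(U^n_\sigma,V^n_\sigma)$ locally uniformly in $[0,\yy)$ as $t\to\yy$, analogously to Lemma~\ref{l3.2}. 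This provides the room required to fit a slightly slower traveling wave underneath the solution.

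Next, for small $\ep\in(0,1)$ I would take
\[
\ud h(t)=(1-\ep)\tilde c^n_\sigma(t-T_*)+h^*,\;\;
\ud u(t,x)=(1-\ep)\tilde p^n_\sigma(x-\ud h(t)),\;\;
\ud v(t,x)=(1-\ep)\tilde q^n_\sigma(x-\ud h(t)),
\]
with $T_*$ and $h^*$ to be selected. A computation mirroring \eqref{4.3a}, using the semi-wave identity \eqref{4.4} together with the decomposition $\int_{-\yy}^0=\int_{-\ud h(t)}^0+\int_{-\yy}^{-\ud h(t)}$, reduces the subsolution test for $\ud u$, with $\rho=x-\ud h(t)$, to showing that
\[
\ep\tilde c^n_\sigma(\tilde p^n_\sigma)'(\rho)+d_1[j_1^n(x)-1]\tilde p^n_\sigma(\rho)+d_1\!\int_{-\yy}^{-\ud h(t)}\!\!J_1^n(\rho-z)\tilde p^n_\sigma(z)\,{\rm d}z+H(\tilde q^n_\sigma(\rho))-\frac{H((1-\ep)\tilde q^n_\sigma(\rho))}{1-\ep}\le0,
\]
and similarly for $\ud v$. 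The first term is $\le0$ by monotonicity of $\tilde p^n_\sigma$; the last two by the decrease of $H(z)/z$; the second since $j_1^n(x)\le\|J_1^n\|_1\le1$. The remaining truncation integral is delicate: because $J_1^n$ has compact support, say in $[-R_n,R_n]$, it vanishes identically whenever $x\ge R_n$, and on the strip $x\in[0,R_n]$ it is dominated by $d_1[1-j_1^n(x)]\tilde p^n_\sigma(\rho)$ once $\ud h(t)>R_n$, because both quantities are controlled by the same tail of $J_1^n$. The free-boundary inequality $\ud h'(t)\le\int_0^{\ud h(t)}\!\int_{\ud h(t)}^{\yy}[\mu_1J_1^n\ud u+\mu_2J_2^n\ud v]\dy\dx$ follows from the integral identity defining $\tilde c^n_\sigma$ in \eqref{4.4}, with a $(1-\ep)^2$ factor on the right providing slack against $(1-\ep)$ on the left.

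To match the initial data at $t=T_*$, I would exploit the locally uniform convergence of $(u^n_\sigma,v^n_\sigma)$ to $(U^n_\sigma,V^n_\sigma)$ to pick $T_*$ large and then $h^*\in(R_n,h^n_\sigma(T_*))$ with $h^n_\sigma(T_*)-h^*$ also sufficiently large, so that $\ud u(T_*,x)\le u^n_\sigma(T_*,x)$ and $\ud v(T_*,x)\le v^n_\sigma(T_*,x)$ throughout $[0,h^*]$. The comparison principle for \eqref{1.10} then gives $h^n_\sigma(t)\ge\ud h(t)$, $u^n_\sigma\ge\ud u$ and $v^n_\sigma\ge\ud v$ for all $t\ge T_*$ and $x\in[0,\ud h(t)]$. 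Passing $t\to\yy$ and then $\ep\to0$ yields $\liminf_{t\to\yy}h^n_\sigma(t)/t\ge\tilde c^n_\sigma$. For the pointwise lower bound on $(u^n_\sigma,v^n_\sigma)$, given $c\in[0,\tilde c^n_\sigma)$ I fix $\ep$ so that $(1-\ep)\tilde c^n_\sigma>c$; then on $[0,ct]$ one has $\ud h(t)-x\to\yy$ uniformly as $t\to\yy$, hence $\ud u(t,x)\to(1-\ep)U^n_\sigma$ and $\ud v(t,x)\to(1-\ep)V^n_\sigma$ uniformly, and sending $\ep\to0$ finishes the proof.

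The principal obstacle is the subsolution verification on the boundary strip $x\in[0,R_n]$: truncating the integral domain from $(-\yy,0]$ to $[-\ud h(t),0]$ produces a positive correction that is absent in the $(-\yy,\yy)$ or $(g(t),h(t))$ settings used in earlier works, and absorbing it into the negative mass deficit $1-j_1^n(x)$ is precisely where the compact support of $J_i^n$ (the very reason for introducing the auxiliary problem \eqref{4.5}) becomes indispensable.
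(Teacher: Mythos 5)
Your overall strategy (a traveling subsolution built from $(1-\ep)$ times the semi-wave of \eqref{4.4}, plus the free-boundary identity giving speed $(1-\ep)\tilde c^n_\sigma$) is the same as the paper's, and the initial-data matching and the passage to the conclusion are fine. The gap is exactly at the point you flag as the "principal obstacle": your absorption of the left-tail truncation integral into the mass deficit $d_1[1-j_1^n(x)]\ud u$ on the strip $x\in[0,R_n]$ does not work. The term to be absorbed is
$d_1\int_{-\yy}^{0}J_1^n(x-y)\,\tilde p^n_\sigma(y-\ud h(t))\,\dy$, and since $\tilde p^n_\sigma$ is \emph{strictly decreasing}, the integrand satisfies $\tilde p^n_\sigma(y-\ud h(t))\ge\tilde p^n_\sigma(x-\ud h(t))$ for every $y\le 0\le x$; hence this integral is bounded \emph{below} by $d_1\bigl[\|J_1^n\|_1-j_1^n(x)\bigr]\tilde p^n_\sigma(\rho)$, while the available slack is $d_1\bigl[1-j_1^n(x)\bigr]\tilde p^n_\sigma(\rho)$. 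These two quantities are not "controlled by the same tail": when $\|J_1^n\|_1=1$ (which occurs for all large $n$ whenever $J_1$ itself has compact support — a case fully allowed by {\bf(J)} and {\bf(J1)}) they coincide, and at $x=0$ the strict monotonicity of $\tilde p^n_\sigma$ then forces the truncation integral to be \emph{strictly larger} than the slack, so the differential inequality for $\ud u$ fails on the strip no matter how large $\ud h(t)$ is. Even when $\|J_1^n\|_1<1$ the domination requires a quantitative comparison of $\tilde p^n_\sigma(\rho)/U^n_\sigma$ with $\|J_1^n\|_1$ that you have not supplied.

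The paper circumvents this entirely: in \eqref{4.6} the differential inequalities are required only for $x\in[L,\ud h(t))$, where $L$ is chosen so that $J_i^n(x)=0$ for $|x|\ge L$; there the left tail $\int_{-\yy}^{0}J_1^n(x-y)\ud u(t,y)\dy$ vanishes identically and $j_1^n(x)=\|J_1^n\|_1\le1$, so the computation is clean. On the complementary strip $[0,L]$ one does not verify the equation at all; instead one imposes, as part of the comparison-principle hypotheses, the direct ordering $\ud u(t,x)\le u^n_\sigma(t+T_1,x)$ and $\ud v(t,x)\le v^n_\sigma(t+T_1,x)$ for \emph{all} $t>0$ and $x\in[0,2L]$, which holds because $\ud u\le(1-\ep)U^n_\sigma$, $\ud v\le(1-\ep)V^n_\sigma$ and, spreading having occurred for \eqref{4.5}, $(u^n_\sigma,v^n_\sigma)\ge\bigl((1-\ep)U^n_\sigma,(1-\ep)V^n_\sigma\bigr)$ on $[0,2L]$ for $t\ge T_1$ with $T_1$ large. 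To repair your argument you should adopt this two-region comparison rather than attempt to verify the subsolution inequality up to the fixed boundary.
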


\begin{proof} Define $\underline{h}(t)=(1-\ep)\tilde c^n_\sigma t+2L$, $\underline{u}(t,x)=(1-\ep)\tilde p^n_\sigma(x-\underline{h}(t))$ and $ \underline{v}(t,x)=(1-\ep)\tilde q^n_\sigma(x-\underline{h}(t))$,
where $L>0$ is large enough such that $J^n_i(x)=0$ for $|x|\ge L$ and $\ep>0$ is arbitrarily small. We next show that there exists a $T_1>0$ such that $(\ud{u},\ud{v},\ud{h})$ satisfies
  \bes\left\{\!\begin{aligned}\label{4.6}
 &\ud u_t\le d_1\int_0^{\ud h(t)}\!\!\!J^n_1(x\!-\!y)\ud u(t,y)\dy\!-\!d_1j^n_1\ud u\!-\!(a+\sigma)\ud u+H(\ud v), \hspace{2mm} t>0, ~ x\in[L,\ud h(t)),\\
&\ud v_t\le d_2\int_0^{\ud h(t)}\!\!\!J^n_2(x\!-\!y)\ud v(t,y)\dy\!-\!d_2j^n_2\ud v\!-\!(b+\sigma)\ud v+G(\ud u),\hspace{2mm} t>0, ~ x\in[L,\ud h(t)),\\
&\ud u(t,\ud h(t))=0, ~ \ud v(t,\ud h(t))=0, \hspace{4mm} t>0,\\
&\ud h'(t)\le\!\int_0^{\ud h(t)}\!\!\!\int_{\ud h(t)}^{\yy}\!\big[\mu_1J^n_1(x\!-\!y)\ud u(t,x)
\!+\!\mu_2J^n_2(x\!-\!y)\ud v(t,x)\big]\dy\dx, \;\; t>0,\\
&\ud u(t,x)\le u^n_\sigma(t+T_1,x), ~ \ud v(t,x)\le v^n_\sigma(t+T_1,x), \hspace{2mm} t>0,~ x\in[0,L],\\
&\ud h(0)\le h^n_\sigma(T_1), ~ \ud u(0,x)\le u^n_\sigma(T_1,x),
 ~ \ud v(0,x)\le v^n_\sigma(T_1,x), \hspace{2mm} x\in[0,h^n_\sigma(T_1)].
 \end{aligned}\right.
 \ees
Once it is done, by a comparison argument, we have $h^n_\sigma(t+T_1)\ge\ud h(t)$ for $t\ge0$. The arbitrariness of $\ep$ implies the first assertion.

 For second assertion, we can choose $\ep$ sufficiently small such that $(1-\ep)\tilde c^n_\sigma>c$. Due to the definitions of $\ud{u}$ and $\ud{v}$, it is easy to see that $\ud{u}\to (1-\ep)U_\sigma^n$ and $\ud{v}\to (1-\ep)V_\sigma^n$ uniformly in $x\in[0,ct]$ as $t\to\yy$. So for any small $\ep>0$, we have $
 \liminf_{t\to\yy}u^n_\sigma\ge(1-\ep)U_\sigma^n$ and $\liminf_{t\to\yy}v^n_\sigma\ge(1-\ep)V_\sigma^n$ uniformly in $x\in[0,ct]$,
 which together with the arbitrariness of $\ep$ yields the second assertion.

It remains to prove \eqref{4.6}. Since spreading happens for $(u^n_\sigma,v^n_\sigma,h^n_\sigma)$, we can choose a large $T_1$ such that $h^n_\sigma(T_1)>2L=\ud{h}(0)$,
  $\ud{u}(t,x)\le(1-\ep)U_\sigma^n\le u^n_\sigma(t+T_1,x)$ and $\ud{v}(t,x)\le(1-\ep)V_\sigma^n\le v^n_\sigma(t+T_1,x)$
for $t>0$ and $x\in[0,2L]$. Noticing that $\ud u(0,x)=(1-\ep)\tilde p^n_\sigma(x-2L)=0$ and $\ud v(0,x)=(1-\ep)\tilde q^n_\sigma(x-2L)=0$ for $x\ge 2L$. Inequalities in the last two lines of \qq{4.6} hold true.

Recall $J^n_i(x)=0$ for $|x|\ge L$. Simple computations yield
 \bess
 &&\int_0^{\ud h(t)}\!\!\!\int_{\ud h(t)}^{\yy}\big[\mu_1J^n_1(x-y)\ud u(t,x)+\mu_2 J^n_2(x-y)\ud v(t,x)\big]\dy\dx\\
 &=&(1-\ep)\int_{-\yy}^0\int_0^{\yy}\big[\mu_1J^n_1(x-y)
 \tilde p^n_\sigma(x)+\mu_2J^n_2(x-y)\tilde q^n_\sigma(x)\big]\dy\dx\\
 &=&(1-\ep)\tilde c^n_\sigma=(\ud{h})'(t).
 \eess
The inequality in forth line of \qq{4.6} is verified.

For $t>0$ and $L\le x<\ud{h}(t)$, since $j^n_1(x)\le 1$ and $H(z)/z$ is decreasing in $z>0$, similar to the derivation of \qq{4.3a}, it can be obtained that
 \bess
 \ud u_t
 &\le&d_1\int_{-\yy}^{\ud{h}(t)}\!\!J^n_1(x-y)\ud{u}(t,y)\dy
 -d_1\ud{u}-(a+\sigma)\ud{u}+(1-\ep)H(\tilde q^n_\sigma(x-\ud{h}))\\
 &\le& d_1\int_0^{\ud{h}(t)}\!\!J^n_1(x-y)\ud{u}(t,y)\dy
 -d_1j^n_1(x)\ud{u}-(a+\sigma)\ud{u}+H(\ud{v}).
 \eess
 The first inequality of \eqref{4.6} is obtained. Analogously, we can argue as above to deduce the second inequality in \eqref{4.6}. Therefore, \eqref{4.6} holds and the proof is finished.
 \end{proof}

\begin{lemma}\label{l4.5} The unique solution $(u^n_\sigma,v^n_\sigma,h^n_\sigma)$ of \eqref{4.5} is a lower solutions of \eqref{1.10}.
\end{lemma}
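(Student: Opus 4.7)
The plan is to verify directly that the triple $(u^n_\sigma, v^n_\sigma, h^n_\sigma)$ satisfies the differential and integral inequalities that define a lower solution of \eqref{1.10} relative to the shifted solution $(u(\cdot+T, \cdot), v(\cdot+T, \cdot), h(\cdot+T))$; the associated pointwise ordering $h^n_\sigma(t)\le h(t+T)$, $u^n_\sigma(t,x)\le u(t+T,x)$, $v^n_\sigma(t,x)\le v(t+T,x)$ then follows from the standard comparison principle for nonlocal free boundary problems of this type, as developed in \cite{DN8,NV}. Since $u^n_\sigma(0,x)=u(T,x)$, $v^n_\sigma(0,x)=v(T,x)$, $h^n_\sigma(0)=h(T)$ and $u^n_\sigma(t,h^n_\sigma(t))=v^n_\sigma(t,h^n_\sigma(t))=0$, the initial and boundary data automatically match those of \eqref{1.10}, so only the three dynamical inequalities need to be checked.

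The core step is to take the equation for $(u^n_\sigma)_t$ from \eqref{4.5} and subtract it from the right-hand side of the $u$-equation of \eqref{1.10} evaluated at $(u^n_\sigma,v^n_\sigma,h^n_\sigma)$. The defect equals
\[
d_1\!\int_0^{h^n_\sigma(t)}\!\big[J_1(x-y)-J^n_1(x-y)\big]u^n_\sigma(t,y)\,\dy \;+\; \big[\sigma - d_1(j_1(x)-j^n_1(x))\big]u^n_\sigma,
\]
and both terms are nonnegative: the first because $J^n_1\le J_1$ and $u^n_\sigma\ge 0$, and the second by the standing requirement $d_1(j^n_1(x)-j_1(x))+\sigma\ge 0$ imposed in the paragraph preceding \eqref{4.4}, which I would rewrite in the form $0\le d_1(j_1-j^n_1)\le\sigma$. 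The $v$-equation is handled identically with $J_1,j_1,a,d_1$ replaced by $J_2,j_2,b,d_2$. For the free boundary law, the bounds $J^n_i\le J_i$ and $u^n_\sigma,v^n_\sigma\ge 0$ give immediately that the right-hand side of $(h^n_\sigma)'(t)$ in \eqref{4.5} is pointwise dominated by the corresponding integral with $J_i$ in place of $J^n_i$, so $(h^n_\sigma)'(t)$ does not exceed the free boundary speed prescribed by \eqref{1.10}.

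There is no substantive obstacle here; the only delicate point is keeping straight the sign of $\sigma - d_1(j_1-j^n_1)$, which is precisely why the truncation threshold $n\ge N$ was chosen as it was in the paragraph above \eqref{4.4}. Everything else reduces to the monotonicity $J^n_i\le J_i$ and $j^n_i\le j_i$ of the truncated kernels together with the nonnegativity of $u^n_\sigma$ and $v^n_\sigma$, so the entire verification is mechanical.
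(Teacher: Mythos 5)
Your proposal is correct and follows essentially the same route as the paper's proof: both rest on the kernel comparison $J^n_i\le J_i$, the standing condition $d_i(j^n_i-j_i)+\sigma\ge 0$ (your rewriting $0\le d_i(j_i-j^n_i)\le\sigma$ is equivalent), the nonnegativity of $u^n_\sigma,v^n_\sigma$, and a final appeal to the comparison principle. Your presentation via a nonnegative ``defect'' is just a repackaging of the paper's chain of inequalities, so there is nothing further to add.
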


\begin{proof}
Recall that $d_i(j^n_i(x)-j_i(x))+\sigma\ge0$ in $\mathbb{R}$ and $J^n_i\le J_i$. We can see that, for $t>0$ and $x\in[0,h^n_\sigma(t))$,
  \bess
(u^n_\sigma)_t&=&d_1\int_0^{h^n_\sigma(t)}\!\!J^n_1(x-y)u^n_\sigma(t,y)\dy
-d_1j^n_1(x)u^n_\sigma-(a+\sigma)u^n_\sigma+H(v^n_\sigma)\\
&\le& d_1\int_0^{h^n_\sigma(t)}\!\!J_1(x\!-\!y)u^n_\sigma(t,y)\dy\!-\!
d_1j_1(x)u^n_\sigma\!+\!(d_1j_1(x)\!-\!d_1j^n_1(x)\!-\!\sigma)u^n_\sigma\!-\!
au^n_\sigma\!+\!H(v^n_\sigma)\\
&\le& d_1\int_0^{h^n_\sigma(t)}\!\!J_1(x-y)u^n_\sigma(t,y)\dy-d_1j_1(x)u^n_\sigma-au^n_\sigma+H(v^n_\sigma).
\eess
Similarly, we have
\[(v^n_\sigma)_t\le d_2\int_0^{h^n_\sigma(t)}J_2(x-y)v^n_\sigma(t,y)\dy-d_2j_2(x)v^n_\sigma-bv^n_\sigma+G(u^n_\sigma).\]
Moreover,
  \bess
(h^n_\sigma)'(t)&=&\int_0^{h^n_\sigma(t)}\!\!\int_{h^n_\sigma(t)}^{\yy}
\big[\mu_1J^n_1(x-y)u^n_\sigma(t,x)+\mu_2J^n_2(x-y)v^n_\sigma(t,x)\big]\dy\dx\\
&\le&\int_0^{h^n_\sigma(t)}\!\!\int_{h^n_\sigma(t)}^{\yy}
\big[\mu_1J_1(x-y)u^n_\sigma(t,x)+\mu_2J_2(x-y)v^n_\sigma(t,x)\big]\dy\dx.
\eess
By a comparison method, we completes the proof.
\end{proof}

\begin{lemma}\label{l4.6}Let $(u,v,h)$ be the unique solution of \eqref{1.10}. Then the following statements are valid.\vspace{-1.5mm}
\begin{enumerate}[$(1)$]
  \item If {\bf (J1)} holds, $\liminf\limits_{t\to\yy}\frac{h(t)}{t}\ge \tilde c$ and $\dd\liminf_{t\to\yy}(u,v)\ge(U,V)$ uniformly in $x\in[0,ct]$ for $c\in[0,\tilde c)$.\vspace{-1.5mm}
  \item If {\bf (J1)} is violated, $\lim\limits_{t\to\yy}\frac{h(t)}{t}=\yy$ and $\dd\liminf_{t\to\yy}(u,v)\ge(U,V)$ uniformly in $x\in[0,ct]$ for $c\ge0$.\vspace{-1.5mm}
\end{enumerate}
\end{lemma}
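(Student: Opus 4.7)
My plan is to combine the comparison principle from Lemma \ref{l4.5} (which asserts that $(u^n_\sigma,v^n_\sigma,h^n_\sigma)$ is a lower solution of \eqref{1.10} with time-shifted initial data) with the spreading estimate in Lemma \ref{l4.4}, and then to pass to the double limit $n\to\yy$ followed by $\sigma\to 0$ via Lemmas \ref{l4.3} and \ref{l4.2}.

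First, fix $T$ as in the setup of problem \eqref{4.5}. Lemma \ref{l4.5} yields $h^n_\sigma(t)\le h(t+T)$, $u^n_\sigma(t,x)\le u(t+T,x)$, and $v^n_\sigma(t,x)\le v(t+T,x)$ for all admissible $n\ge N$, $\sigma\in(0,\sigma_0)$ and $t\ge 0$. Combined with Lemma \ref{l4.4}, this gives
\[
\liminf_{t\to\yy}\frac{h(t)}{t}=\liminf_{t\to\yy}\frac{h(t+T)}{t}\ge\liminf_{t\to\yy}\frac{h^n_\sigma(t)}{t}\ge \tilde c^n_\sigma.
\]
In case (1), letting first $n\to\yy$ and then $\sigma\to 0$, Lemmas \ref{l4.3} and \ref{l4.2} give $\tilde c^n_\sigma\to\tilde c_\sigma\to\tilde c$, whence $\liminf_{t\to\yy}h(t)/t\ge\tilde c$. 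In case (2), Lemma \ref{l4.3} already yields $\tilde c^n_\sigma\to\yy$ as $n\to\yy$, so $\lim_{t\to\yy}h(t)/t=\yy$.

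For the lower estimate on $(u,v)$, fix $c$ in the allowed range and pick $\sigma>0$ small and $n$ large so that $c<\tilde c^n_\sigma$; this is possible in case (1) by Lemmas \ref{l4.2}--\ref{l4.3} and in case (2) by Lemma \ref{l4.3}. Choose $c'\in(c,\tilde c^n_\sigma)$. Lemma \ref{l4.4} gives $\liminf_{t\to\yy}(u^n_\sigma,v^n_\sigma)(t,x)\ge(U^n_\sigma,V^n_\sigma)$ uniformly in $x\in[0,c't]$. Since $ct+cT\le c't$ for all $t$ sufficiently large, the comparison from Lemma \ref{l4.5} transfers this bound to $\liminf_{t\to\yy}(u,v)(t,x)\ge(U^n_\sigma,V^n_\sigma)$ uniformly in $x\in[0,ct]$. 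Finally, letting $n\to\yy$ reduces the algebraic system defining $(U^n_\sigma,V^n_\sigma)$ to the one defining $(U_\sigma,V_\sigma)$ (since $\|J^n_i\|_1\to 1$), and letting $\sigma\to 0$ then gives $(U_\sigma,V_\sigma)\to(U,V)$ via the defining system \eqref{1.2}; the monotonicity and continuity of $H$ and $G$ justify both passages.

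The main obstacle I anticipate is bookkeeping the time shift $T$ when transferring the uniform lower estimate from the spatial interval $[0,c't]$ of the lower solution to the interval $[0,ct]$ of the full solution. Insisting on the strict inequality $c'>c$ is the key device, since then $c't-c(t+T)=(c'-c)t-cT\to\yy$ so the target interval is eventually contained in the region where the bound is known. Apart from this subtlety, every step is a routine limit passage, drawing entirely on the groundwork laid by Lemmas \ref{l4.2}--\ref{l4.5}.
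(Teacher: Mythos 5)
Your proposal is correct and follows essentially the same route as the paper: combine the comparison of Lemma \ref{l4.5} with the lower spreading estimates of Lemma \ref{l4.4}, then pass to the limits $n\to\yy$ and $\sigma\to0$ via Lemmas \ref{l4.3} and \ref{l4.2} and the convergence $(U_\sigma^n,V_\sigma^n)\to(U_\sigma,V_\sigma)\to(U,V)$. Your explicit handling of the time shift $T$ via the auxiliary speed $c'\in(c,\tilde c^n_\sigma)$ is a detail the paper leaves implicit, but it is the right justification.
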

\begin{proof}(1) By Lemmas \ref{l4.4} and \ref{l4.5}, we have $\liminf\limits_{t\to\yy}\frac{h(t)}t\ge \tilde c^n_\sigma$. Together with Lemmas \ref{l4.2} and \ref{l4.3}, we further derive $\liminf\limits_{t\to\yy}\frac{h(t)}t\ge \tilde c$. Again from Lemma \ref{l4.4} and \ref{l4.5}, we have $\liminf_{t\to\yy}(u,v)\ge(U_\sigma^n,V_\sigma^n)$ uniformly in $x\in[0,ct]$ for all  $c\in[0,\tilde c^n_\sigma)$,
which combined with the fact that $(U_\sigma^n,V_\sigma^n)$ is increasing to $(U_\sigma,V_\sigma)$ as $n\to\yy$, and $(U_\sigma,V_\sigma)$ is decreasing to $(U,V)$ as $\sigma\to0$, yields that $\liminf_{t\to\yy}(u,v)\ge(U,V)$ uniformly in $x\in[0,ct]$ for all $c\in[0,\tilde c)$.

(2) Notice that {\bf (J1)} does not hold. By Lemma \ref{l4.3}, $\tilde c^n_\sigma\to\yy$ as $n\to\yy$. Thus this assertion directly follows from the similar analysis as above. We complete the proof.
\end{proof}

Theorem \ref{t1.4} follows from Lemmas \ref{l4.1} and \ref{l4.6}, as well as the result (already proved in the proof of Lemma \ref{l4.1}) $\limsup_{t\to\yy}u\le U$ and $\limsup_{t\to\yy}v\le V$ uniformly in $x\in[0,\yy)$.

\section{Appendix A}
{\setlength\arraycolsep{2pt}
\setcounter{equation}{0}

From the point of view of PDEs, the differential and boundary operators in problems \qq{1.7} and/or \qq{1.8} are determined by diffusion coefficient $d$, kernel function $J$, and the moving coefficient $\mu$ of the free boundary. The triplet $(d,J,\mu)$ can be seen as the ``working operator" for solving problems \qq{1.7} and/or \qq{1.8}. In this appendix we shall show that \qq{1.8} cannot be transformed into \qq{1.7} in the sense of ``working operator".

Let $(u,h)$ be the unique solution of \eqref{1.8}. Define $\tilde{u}_0(x)=u_0(|x|)$ and $\tilde{u}(t,x)=u(t,|x|)$. We next prove that there is no $\tilde d>0$, $\tilde\mu>0$ and $\tilde J(x)$ satisfying {\bf(J)} such that $(\tilde{u},-h,h)$ is the unique solution of \eqref{1.7} with $d$, $\mu$, $J$ and $u_0(x)$ replaced by $\tilde d$, $\tilde\mu$, $\tilde J(x)$ and $\tilde{u}_0(|x|)$, respectively.

\begin{theorem}\lbl{ta.1} Fix $d, \mu>0$ and $J(x)$. There do not exist $\tilde d, \tilde\mu>0$ and $\tilde J(x)$ satisfying condition {\bf(J)} such that, for all $u_0(x)$ satisfying {\bf (I)} and $h_0>0$, $(\tilde u, -h, h)$ is the unique solution of \qq{1.7} with $(d, \mu, J)$ replaced by $(\tilde d, \tilde\mu, \tilde J)$, respectively.
 \end{theorem}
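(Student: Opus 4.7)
Suppose, toward a contradiction, that such $(\tilde d,\tilde\mu,\tilde J)$ exist. Then for every admissible $u_0$ and every fixed $h_0>0$, the even extension $\tilde u(t,x):=u(t,|x|)$ must satisfy the first equation of \qq{1.7} with parameters $(\tilde d,\tilde J)$ on $[-h(t),h(t)]$. My strategy is to equate the right-hand sides of \qq{1.7} and \qq{1.8} at $t=0$, exploit arbitrariness of $u_0$ to obtain pointwise identities for the kernels, and contradict condition {\bf(J)}.

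Fix $h_0>0$ and $x\in[0,h_0]$. At $t=0$, splitting $\int_{-h_0}^{h_0}\tilde J(x-y)\tilde u(0,y)\,\dy$ into integrals over $[0,h_0]$ and $[-h_0,0]$, substituting $y\mapsto -y$ on the latter, and using that $\tilde J$ is even, the $f$-terms cancel and equality of the two right-hand sides becomes
\be
\int_0^{h_0}\!\kk[dJ(x-y)-\tilde d\bigl(\tilde J(x-y)+\tilde J(x+y)\bigr)\rr]u_0(y)\,\dy=\big[dj(x)-\tilde d\big]u_0(x)\lbl{a.1}
\ee
for every admissible $u_0$. Set $K(x,y):=dJ(x-y)-\tilde d\bigl(\tilde J(x-y)+\tilde J(x+y)\bigr)$ and $a(x):=dj(x)-\tilde d$.

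Next I apply a variation argument to conclude $K\equiv 0$ and $a\equiv 0$ on $[0,h_0]$. Fix any admissible $u_0^\star$ and any $\psi\in C([0,h_0])$ compactly supported in $[0,h_0)$; then $u_0^\star+\ep\psi$ is still admissible for all $|\ep|$ sufficiently small. Subtracting \qq{a.1} for $u_0^\star$ from that for $u_0^\star+\ep\psi$ and dividing by $\ep$ yields
\be
\int_0^{h_0}K(x,y)\psi(y)\,\dy=a(x)\psi(x).\lbl{a.2}
\ee
Choosing $\psi$ supported in a small neighborhood of any $y_0\in(0,h_0)\setminus\{x\}$ makes the right-hand side vanish, and localization combined with continuity of $K$ forces $K(x,y_0)=0$; the arbitrariness of $x$, $y_0$ together with continuity then gives $K\equiv 0$ on $[0,h_0]^2$. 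Substituting back into \qq{a.1} gives $a(x)u_0(x)\equiv 0$, and since $u_0>0$ on $[0,h_0)$, $a\equiv 0$ on $[0,h_0)$, hence on $[0,h_0]$ by continuity.

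Finally, $a\equiv 0$ reads $dj(x)\equiv\tilde d$ on $[0,h_0]$. Since $j'(x)=J(x)$ by definition of $j$, differentiation yields $J\equiv 0$ on $(0,h_0)$, and by continuity $J(0)=0$, contradicting $J(0)>0$ in {\bf(J)}. The principal technical step is the variation argument producing the pointwise identities from the integral identity \qq{a.1}: the class of admissible initial data is not a linear space, but compactly supported perturbations of a fixed admissible datum preserve admissibility and supply enough test functions $\psi$ to push the argument through; the remaining steps are routine.
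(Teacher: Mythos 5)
Your proof is correct, and it takes a genuinely different --- and in fact more economical --- route than the paper's. Both arguments begin with the same folding of the even extension, which equates the right-hand sides of \eqref{1.7} and \eqref{1.8} on $[0,h(t))$; but the paper immediately specializes to $x=h(t)$ so that the local terms drop out, lets $t\to0$ to obtain the boundary identity \eqref{5.1}, and then needs two further inputs to close the argument: a family of near-indicator initial data together with the limit $h_0\to\yy$ to force $\tilde d=d$, and the free boundary equation (yielding $2\tilde\mu=\mu$ and a second integral identity, which is then differentiated in $h_0$ and combined with \eqref{5.1}) to reach the contradiction. You instead keep the identity for all $x\in[0,h_0)$ at $t=0$ and exploit its linearity in $u_0$: compactly supported continuous perturbations of a fixed admissible datum remain admissible for small $|\ep|$, so the integral identity separates into the pointwise statement $K\equiv0$ for the kernel difference and $d\,j(x)\equiv\tilde d$ for the local part; since $j'=J\ge0$, a constant $j$ forces $J\equiv0$ on $(0,h_0)$ and hence $J(0)=0$, contradicting {\bf(J)}. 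Your localization step (testing against bumps centered at $y_0\neq x$ and using continuity of the kernels) is sound, and your route never touches the free boundary condition or any limiting construction in $h_0$. The paper's longer argument has the side benefit of identifying the forced relations $\tilde d=d$ and $2\tilde\mu=\mu$, which clarify why the obstruction is genuinely about the kernels rather than the coefficients, but for the non-existence statement itself your argument is complete.
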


\begin{proof} Assume on the contrary that there exists such triplet $(\tilde d, \tilde\mu, \tilde J(x))$ as desired. Simple computations yield that for $t>0$ and $x\in[0,h(t))$,
 \bess
\tilde{u}_t&=&\tilde d\int_{-h(t)}^{h(t)}\tilde J(x-y)\tilde{u}(t,y)\dy-\tilde d\tilde{u}(t,x)+f(\tilde{u})\\
&=&\tilde d\int_0^{h(t)}\big[\tilde J(x-y)+\tilde J(x+y)\big]u(t,y)\dy-\tilde du+f(u).
 \eess
Since $\tilde{u}(t,x)=u(t,x)$ for $t>0$ and $x\in[0,h(t))$, we have $\tilde{u}_t=u_t$ in such regions. Thus, by the differential equation of $u$,
\[\tilde d\int_0^{h(t)}\big[\tilde J(x-y)+\tilde J(x+y)\big]u(t,y)\dy-\tilde du=d\int_0^{h(t)}J(x-y)u(t,y)-dj(x)u.\]
By continuity and $u(t,h(t))=0$ for $t\ge0$,
\[\tilde d\int_0^{h(t)}\big[\tilde J(h(t)-y)+\tilde J(h(t)+y)\big]u(t,y)\dy=d\int_0^{h(t)}J(h(t)-y)u(t,y)\dy.\]
Letting $t\to0$ and using continuity again, we obtain
\bes\label{5.1}
\tilde d\int_0^{h_0}\big[\tilde J(h_0-y)+\tilde J(h_0+y)\big]u_0(y)\dy=d\int_0^{h_0}J(h_0-y)u_0(y)\dy
\ees
holds for all $h_0>0$ and $u_0(x)$ satisfying {\bf (I)}. For $h_0>1$, choose a class of $u_0(x)$ as follows
\[u_0(x)=1, ~ ~ 0\le x\le h_0-1/{h_0}; ~ ~ ~ u_0(x)=h_0(h_0-x), ~ ~ h_0-1/{h_0}\le x\le h_0.\]
Substituting such $u_0$ into \eqref{5.1} and then direct calculating yield
 \bess
&&\tilde d\int_{-h_0}^{-1/{h_0}}\!\!\tilde J(y)\dy+\tilde d\int_{h_0-1/{h_0}}^{h_0}\!\!
\big[\tilde J(h_0-y)+\tilde J(h_0+y)\big]h_0(h_0-y)\dy\nonumber\\
&=&d\int_{-h_0}^{-1/{h_0}}\!\!J(y)\dy+d\int_{h_0-1/{h_0}}^{h_0}\!\!J(h_0-y)h_0(h_0-y)\dy.
 \eess
Letting $h_0\to\yy$ leads to $\tilde d=d$. Thus \eqref{5.1} holds for removing $\tilde d$ and $d$.

Moreover, by the equation of free boundary, we have that for $t>0$,
\bess
h'(t)&=&\tilde\mu\int_{-h(t)}^{h(t)}\int_{h(t)}^{\yy}\tilde J(x-y)\tilde{u}(t,x)\dy\dx\\
&=&\tilde\mu\int_0^{h(t)}\!\int_{h(t)}^{\yy}\big[\tilde J(x-y)+\tilde J(x+y)\big]u(t,x)\dy\dx\\
&=&\mu\int_0^{h(t)}\!\int_{h(t)}^{\yy}J(x-y)u(t,x)\dy\dx.
\eess
By continuity, we deduce
\[\tilde\mu\int_0^{h_0}\int_{h_0}^{\yy}\big[\tilde J(x-y)+\tilde J(x+y)\big]u_0(x)\dy\dx=\mu\int_0^{h_0}\int_{h_0}^{\yy}J(x-y)u_0(x)\dy\dx\]
is valid for all $h_0>0$. This implies that there exists a $x_0\in[0,h_0]$ such that
  \[\tilde\mu\int_{h_0}^{\yy}\big[\tilde J(x_0-y)+\tilde J(x_0+y)\big]\dy
  =\mu\int_{h_0}^{\yy}J(x_0-y)\dy.\]
Then setting $h_0\to0$ gives $2\tilde\mu=\mu$. Therefore,
\bess\int_0^{h_0}\!\int_{h_0}^{\yy}\big[\tilde J(x-y)+\tilde J(x+y)\big]u_0(x)\dy\dx=2\int_0^{h_0}\!\int_{h_0}^{\yy}J(x-y)u_0(x)\dy\dx\eess
holds for all $h_0>0$. Set
\[\Phi(h_0)=\int_0^{h_0}\!\int_{h_0}^{\yy}\big[\tilde J(x-y)+\tilde J(x+y)\big]u_0(x)\dy\dx-2\int_0^{h_0}\!\int_{h_0}^{\yy}J(x-y)u_0(x)\dy\dx\]
for all $h_0>0$. By $\Phi(h_0)\equiv0$, we see $\Phi'(h_0)\equiv0$ in $h_0>0$. Note that $u_0(h_0)=0$. It is easy to show
\[\Phi'(h_0)=-\int_0^{h_0}\big[\tilde J(x-h_0)+\tilde J(x+h_0)-2J(x-h_0)\big]u_0(x)\dx\equiv0, ~ \forall ~ h_0>0,\]
which, together with \eqref{5.1}, yields
\[2\int_0^{h_0}J(x-h_0)u_0(x)\dx=\int_0^{h_0}J(x-h_0)u_0(x)\dx.\]
This is a contradiction since $\int_0^{h_0}J(x-h_0)u_0(x)\dx>0$. The proof is complete.
\end{proof}

\end{document}